\colorlet{refkey}{orange!20}
\colorlet{labelkey}{blue!60}
\numberwithin{equation}{section}
\newtheorem{theorem}{Theorem}[section]	
\newtheorem{proposition}[theorem]{Proposition}
\newtheorem{lemma}[theorem]{Lemma}
\newtheorem{corollary}[theorem]{Corollary}
\theoremstyle{definition}
\newtheorem{definition}[theorem]{Definition}
\theoremstyle{remark}
\newtheorem{remark}[theorem]{Remark}
\newcommand{\abs}[1]{\left\lvert#1\right\rvert}
\newcommand{\norm}[1]{\left\lVert#1\right\rVert}
\newcommand{\ang}[1]{\left\langle #1 \right\rangle}
\newcommand{\angs}[1]{\langle #1 \rangle}
\newcommand{\floor}[1]{\left\lfloor #1 \right\rfloor}
\newcommand{\ceil}[1]{\left\lceil #1 \right\rceil}
\newcommand{\paren}[1]{\left( #1 \right)}
\newcommand{\sqb}[1]{\left[ #1 \right]}
\newcommand{\set}[1]{\left\{ #1 \right\}}
\newcommand{\wt}{\widetilde}
\DeclareMathOperator{\im}{im}
\DeclareMathOperator{\cl}{cl}
\DeclareMathOperator{\proj}{proj}
\newcommand{\CC}{\mathbb{C}}
\newcommand{\EE}{\mathbb{E}}
\newcommand{\RR}{\mathbb{R}}
\newcommand{\PP}{\mathbb{P}}
\newcommand{\ZZ}{\mathbb{Z}}
\newcommand{\net}{\mathrm{net}}
\title[Spectral non-concentration near the top]{Spectral non-concentration near the top \\ for unimodular random graphs}
\author[Fraczyk]{Mikolaj Fraczyk}
\author[Hayes]{Ben Hayes}
\author[Sudan]{Madhu Sudan}
\author[Zhao]{Yufei Zhao}
\thanks{Hayes was supported in part by NSF grant DMS-2000105. 
Fraczyk was supported in part by the Dioscuri programme initiated
by the Max Planck Society, jointly managed with the National Science Centre in Poland, and mutually funded by
Polish the Ministry of Education and Science and the German Federal Ministry of Education and Research.
Sudan was supported in part by a Simons Investigator Award and CCF 2152413.
Zhao was supported in part by NSF CAREER award DMS-2044606.}
\address{Fraczyk: Faculty of Mathematics and Computer Science, Jagiellonian University, ul. Łojasiewicza 6, 30-348 Krak{\'o}w, Poland}
\email{mikolaj.fraczyk@uj.edu.pl}
\address{Hayes: Department of Mathematics,
University of Virginia, 141 Cabell Drive, Kerchof Hall, Charlottesville, VA, 22904}
\email{brh5c@virginia.edu}
\address{Sudan: School of Engineering and Applied Sciences, Harvard University, Cambridge, Massachusetts, USA}
\email{madhu@cs.harvard.edu}
\address{Zhao: Massachusetts Institute of Technology, Cambridge, MA, USA}
\email{yufeiz@mit.edu}
\begin{document}

\begin{abstract}
In recent work on equiangular lines, Jiang, Tidor, Yuan, Zhang, and Zhao showed that a connected bounded degree graph has sublinear second eigenvalue multiplicity. 
More generally they show that there cannot be too many eigenvalues near the top of the spectrum.
We extend this result to infinite unimodular random graphs.
As a corollary, the spectral distribution of the adjacency operator cannot have an atom at the top. 
For an infinite regular expander, we deduce that the singularity of the spectral measure at the top satisfies $\mu_G[(1-\theta)\rho,\rho] \lesssim \theta^c$ for some constant $c>0$, where $\rho$ is the spectral radius of the adjacency operator of the graph. This  implies new general estimates on the return probabilities of random walks.
\end{abstract}

\maketitle

\section{Introduction}

The shapes of spectral distributions of graphs is an important topic.
Classic results include the Kesten--McKay law~\cite{Kes59,McK81} giving the spectral distribution of a random $d$-regular graph, 
the Alon--Boppana bound~\cite{Alo86,Nil91} on the minimum possible second eigenvalue of a $d$-regular graph,
and results showing the optimality of the Alon--Boppana bound including Friedman's theorem~\cite{Fri08} and Ramanujan graphs~\cite{LPS88,Mar88,MSS15}. 
See the survey \cite{HLW06} on expander graphs.

Our goal is to give some interesting necessary conditions on the spectral distribution of bounded degree graphs, both for finite and infinite graphs. 
Informally speaking, we show that it is impossible for significant mass of the spectral distribution to be located near the top of the spectrum of the adjacency operator.
In this article, we are always referring to the spectrum of the adjacency operator, as opposed to the Laplacian. For regular graphs, the spectrum of the Laplacian and the adjacency operators coincide up to an easy transformation.

Jiang, Tidor, Yao, Zhang, and Zhao~\cite{JTYZZ21} showed that a connected bounded degree graph has sublinear second eigenvalue multiplicity.
This spectral result was a key step in solving the longstanding problem of determining the maximum number of equiangular lines with a fixed angle in high dimensions.
The dependence on $\Delta$ below was not made explicit in \cite{JTYZZ21} but can be read from the proof.

\begin{theorem}[{\cite{JTYZZ21}}] \label{thm:2nd-eig}
For a connected $n$-vertex graph with maximum degree at most $\Delta$,
the second largest eigenvalue of its adjacency matrix has multiplicity $O((\log \Delta) n/\log\log n)$.	
\end{theorem}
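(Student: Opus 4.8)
The plan is to bound the multiplicity $m$ of $\lambda_2$ through the trace of the spectral projection. Let $P$ be the orthogonal projection onto the $\lambda_2$-eigenspace $W$, so $m=\operatorname{tr}P=\sum_v P_{vv}$. Since $P^2=P$ is symmetric, $P_{vv}=\sum_u P_{uv}^2=\snorm{Pe_v}^2$, and the column $Pe_v$ is itself a $\lambda_2$-eigenvector taking the value $P_{vv}$ at $v$; hence, when $Pe_v\neq 0$, the unit vector $\psi_v:=Pe_v/\snorm{Pe_v}$ is a $\lambda_2$-eigenvector with $\psi_v(v)^2=P_{vv}$. So everything reduces to a \emph{delocalization} statement: every unit eigenvector $\psi$ of the second-largest eigenvalue of a connected graph on $n$ vertices with maximum degree $\le\Delta$ satisfies $\snorm{\psi}_\infty^2\lesssim(\log\Delta)/\log\log n$. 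Applying this to each $\psi_v$ at its own vertex $v$ gives $P_{vv}\lesssim(\log\Delta)/\log\log n$ for all $v$, hence $m=\sum_v P_{vv}\lesssim(\log\Delta)\,n/\log\log n$. (Crude trace bounds like $\sum_i\lambda_i^2=\operatorname{tr}(A^2)\le\Delta n$ only give $m\lesssim n$, so a genuinely local input is needed.)

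To prove the delocalization statement I would work at scale $r\asymp(\log\log n)/\log\Delta$, chosen so every ball $B(v,r)$ has at most $\Delta^{r+1}\lesssim\log n$ vertices. Passing to the ground-state transform is convenient: writing $\rho=\lambda_1$ for the (simple, by Perron--Frobenius) top eigenvalue with positive eigenvector $\phi$ and $g=\psi/\phi$, the eigenvalue equation becomes $\sum_{w\sim v}\phi(v)\phi(w)\bigl(g(w)-g(v)\bigr)=-(\rho-\lambda_2)\phi(v)^2g(v)$, so $g$ is an eigenfunction of the $\phi^2$-reversible weighted Laplacian $\cL$ for its \emph{smallest positive} eigenvalue $\rho-\lambda_2$, with the constant function the unique null vector. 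If $\psi$ were concentrated near some vertex $v$, then truncating $g$ to $B(v,r)$ and inserting it into the Rayleigh quotient of the Laplacian of the induced subgraph on $B(v,r)$ produces boundary terms controlled by the mass of $g$ on the sphere of radius $r$; since $B(v,r)$ is a \emph{proper} connected induced subgraph on at most $\log n$ vertices, a quantified Perron--Frobenius estimate (the relevant eigenvalue of a weighted $N$-vertex connected graph is separated from $\rho$ by at least $1/(\operatorname{poly}(\Delta)\,N^2)$, up to the normalization coming from the $\phi$-ratios over the ball) is incompatible with the concentration once $r$ is this large — after running the comparison over a maximal disjoint family of such balls and using orthogonality of functions supported on disjoint sets. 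Unwinding and using the eigenvalue equation to turn a pointwise spike $\psi(v)$ into commensurate $\ell^2$-mass on $B(v,r)$ gives $\psi(v)^2\lesssim 1/r$.

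The main obstacle is precisely this estimate, and three points need care. First, a single large value $\psi(v)$ does not by itself make $\psi$ concentrated on a small ball; converting a spike into genuine $\ell^2$-concentration via the eigenvalue equation degrades when $\lambda_2$ is far below $\rho$, so the argument must be arranged to use only the structural fact that \emph{no} eigenvalue lies strictly between $\lambda_2$ and $\rho$, not quantitative proximity. Second, the truncation creates boundary error on the sphere of radius $r$, whose control requires an a priori decay/averaging estimate for second-eigenvalue eigenvectors along balls. Third, one must convert "many disjoint balls, each hosting an approximate eigenvector with eigenvalue near the top" into a bound on their number via near-orthogonality, and tune constants so that $r$ can be taken as large as $(\log\log n)/\log\Delta$. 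It is exactly the crude volume bound $\abss{B(v,r)}\le\Delta^{r+1}$ together with the $\operatorname{poly}(\Delta)/N^2$-type spectral gap of an $N$-vertex subgraph that force the $\log n$, hence the $\log\log n$ in the final bound and the linear dependence on $\log\Delta$.
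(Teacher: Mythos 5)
There is a genuine gap: the delocalization statement you reduce everything to is false. You claim that every unit eigenvector $\psi$ of the second-largest eigenvalue of a connected $n$-vertex graph with maximum degree at most $\Delta$ satisfies $\snorm{\psi}_\infty^2 \lesssim (\log\Delta)/\log\log n$. Consider two disjoint copies of $K_4$ joined by a path of length $L$, so $n = L+7$ and the maximum degree is $4$. By the reflection symmetry, the top two eigenvectors are the symmetric and antisymmetric combinations of the Perron vector of ``$K_4$ with a ray attached''; both eigenvalues tend to a limit $\mu^* \ge 3 > 2$ as $L \to \infty$, while along the path the eigenvalue equation $\lambda\psi(p_i)=\psi(p_{i-1})+\psi(p_{i+1})$ with $\lambda>2$ forces exponential decay away from the two copies of $K_4$. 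Hence the unit $\lambda_2$-eigenvector has $\Theta(1)$ of its $\ell^2$-mass on $O(1)$ vertices and $\snorm{\psi}_\infty^2=\Theta(1)$ independently of $n$. So no argument (ground-state transform, Rayleigh quotient truncation, or otherwise) can establish the lemma you need; the issue is not one of the three technical points you flag but that the target statement has a counterexample. Your reduction $m=\tr P=\sum_v P_{vv}$ with $P_{vv}=\psi_v(v)^2$ is fine, but bounding it via a \emph{uniform} pointwise bound on $P_{vv}$ is a dead end: individual eigenvectors near the top can be fully localized; what is true is only that localized near-top eigenvectors cannot occur at many \emph{far-apart} locations.

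This is exactly how the paper's argument (following Jiang--Tidor--Yao--Zhang--Zhao) sidesteps the issue. It is a global counting argument, not a delocalization argument: the set $U$ of vertices $v$ with $\lambda_1(B_G(v,s+1))>\lambda_2$ is shown to be small because a maximal $2(s+2)$-separated subset of $U$ yields that many disjoint induced subgraphs of spectral radius exceeding $\lambda_2$, hence that many eigenvalues above $\lambda_2$ --- and there is only one. After deleting $U$ together with a small $r$-net (whose removal strictly lowers the local spectral radii, \cref{lem:rad-drop}), the moment method $\sum_i\lambda_i(H)^{2s}\le\sum_v\lambda_1(B_H(v,s))^{2s}$ shows $H$ has few eigenvalues in $[(1-\theta)\lambda_2,\lambda_2]$, and Cauchy interlacing transfers this back to $G$. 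In your dumbbell example this argument gives the right answer: $U$ is just the two small neighborhoods of the $K_4$'s, contributing $O(1)$ to the count. If you want to pursue a projection-based route, the correct target is a bound on $\sum_v P_{vv}$ in aggregate (in the spirit of McKenzie--Rasmussen--Srivastava's closed-walk-support argument for the normalized adjacency matrix), not a pointwise bound.
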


In this paper, we first explain how to modify the proof of \cref{thm:2nd-eig} from \cite{JTYZZ21} to obtain a statement about the non-concentration of the spectral distribution near the top in finite connected bounded-degree graphs.
Then, we extend the result from finite graphs to a class of infinite graphs known as unimodular random graphs.
The proof for unimodular random graphs requires several new ingredients, including an extension of the Cauchy eigenvalue interlacing theorem and a net selection lemma inspired by ideas from distributed local graph algorithms (e.g., \cite{NO08}).
We leave it as an open problem to extend the result to additional settings beyond unimodularity, such as stationary random graphs~\cite{BC12}.

Let us comment on the various hypotheses in \cref{thm:2nd-eig}.
We cannot drop the connectivity hypothesis, since otherwise we can take a disjoint union of $m$ copies of some fixed connected graph, which has the top eigenvalue repeated $m$ times.
The maximum degree hypothesis also cannot be dropped, since otherwise we can take the complete graph, or some strongly regular graph, which has linear second eigenvalue multiplicity. Though, it is unclear how sharp is the $\log \Delta$ factor in the theorem.
Finally, it is important that we are looking at eigenvalues near the top, as there exist connected bounded degree graphs whose most negative eigenvalue has linear multiplicity \cite[Example 8.3]{JTYZZ2}. 

There are several ways to generalize \cref{thm:2nd-eig}.
For example, \cite{JTYZZ21} also showed that instead of the second largest eigenvalue, the $j$-th largest eigenvalue has multiplicity $O_{\Delta, j}(n/\log\log n)$. 
Furthermore, a similar bound also applies to the number of eigenvalues (counting multiplicity) within a small window of the $j$-th largest eigenvalue.
See \cref{thm:finite-main} below for precise statements.

The proof of \cref{thm:2nd-eig} also allows positive edge weights in some fixed interval $[w_{\min}, w_{\max}]$ with $0 < w_{\min} \le w_{\max}$. 
Given an edge-weighted graph $G$, we define $A_G$ to be the edge-weighted adjacency matrix of $G$, whose $(u,v)$-entry is the edge-weight of $G$ if $uv \in E(G)$ and zero otherwise.
The reader is welcomed to ignore edge-weights and focus on the setting of unit edge weights for simplicity. 
We include this generalization to edge weights since it captures the normalized adjacency matrix $D_G^{-1/2} A_G D_G^{-1/2}$ associated to the simple random walk on an unweighted $G$, where $D_G$ is the diagonal matrix of vertex degrees.
This is equivalent to weighing every edge $uv$ of the unweighted $G$ by $(\deg u)^{-1/2}(\deg v)^{-1/2}$ to obtain an edge weighted $\wt G$, so that $A_{\wt G} = D_G^{-1/2} A_G D_G^{-1/2}$.
McKenzie, Rasmussen, and Srivastava~\cite{MRS21} showed that the second eigenvalue multiplicity of $A_{\wt G}$ is $O_\Delta(n/(\log n)^c)$ for a connected $n$-vertex graph $G$ of maximum degree $\Delta$, which improves over the $O_\Delta(n/\log\log n)$ bound in \cite{JTYZZ2} for reguar graphs.

Note that when we talk about the maximum degree $\Delta$ or distance in $G$, 
we are always referring to that of the underlying unweighted graph (i.e., ignoring the edge-weights).

We write $\abs{G}$ for the number of vertices of $G$.
For any $E \subseteq \RR$, write $m_G(E)$ for the number of eigenvalues of the weighted adjacency matrix of $G$ lying in $E$.
Let us write $\mu_G$ to be the spectral distribution of $G$ defined by $\mu_G(E) := m_G(E) / \abs{G}$.
To simplify notation, we write $\mu_G(a,b]$ instead of $\mu_G((a,b])$ and likewise with other types of intervals.
We write $x \lesssim_A y$ to mean $x \le C_A y$ for some constant $C_A$ that depends only on $A$.

In \cref{sec:infinite} we recall the notion of a unimodular random graph, which is a useful notion of infinite graphs introduced by Benjamini and Schramm~\cite{BS01} and further developed by Aldous and Lyons~\cite{AL07}.
The spectral distribution $\mu_G$ will also be defined in \cref{sec:infinite}.

Let us first state our main results in a qualitative form. 

\begin{theorem} \label{thm:intro}
	For every $\Delta, \epsilon > 0$ and $0 < w_{\min} \le w_{\max}$,
	there exist $\theta, \delta > 0$ such that the following holds.
	Let $G$ be an edge-weighted connected graph with at least $1/\delta$ vertices or an infinite connected unimodular random edge-weighted graph.
	Assume that $G$ has maximum degree at most $\Delta$, and all edge-weights lie in $[w_{\min}, w_{\max}]$.
	If $x > 0$ satisfies $\mu_G(x,\infty) \le \delta$, then $\mu_G[(1-\theta)x, x] \le \epsilon$.
\end{theorem}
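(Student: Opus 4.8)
The plan is to reduce everything to the finite case, which (per the discussion in the introduction) is a modification of the argument of Jiang--Tidor--Yao--Zhang--Zhao \cite{JTYZZ21}, and then to transfer that finite statement to unimodular random graphs via a local approximation together with the two announced new tools: an interlacing-type inequality and a net-selection lemma. Concretely, I would first prove the finite statement in the contrapositive form: there are constants $\theta, \delta > 0$ (depending only on $\Delta, \epsilon, w_{\min}, w_{\max}$) so that for a connected edge-weighted graph $G$ on at least $1/\delta$ vertices with $\mu_G(x,\infty) \le \delta$, if $\mu_G[(1-\theta)x, x] > \epsilon$ then we reach a contradiction. The mechanism from \cite{JTYZZ21} is: from a large eigenspace associated to eigenvalues in a narrow top window $[(1-\theta)x, x]$, one extracts, for a random root, a vector that is both well-localized on a bounded-radius ball and nearly an eigenvector, using that the number of eigenvalues strictly above the window is small ($\le \delta \abs G$); polynomial approximation of the spectral projector (a Chebyshev-type argument, exploiting the spectral gap above the window) shows the localized test vectors still carry most of the mass, and a dimension count on a single ball of bounded radius -- which has only $O_\Delta(1)$ vertices -- caps how many near-orthogonal such vectors can exist, forcing $\epsilon$ to be small unless $n$ is small. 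This is where the $\log\log n$ / $\log\Delta$ bookkeeping enters, but for the qualitative statement we only need \emph{some} $\theta,\delta$.

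For the infinite unimodular case I would run the same scheme but replace "random root'' and "$n$'' by the unimodular measure and its mass-transport principle. The spectral distribution $\mu_G$ of a unimodular random graph is, by definition, $\EE[\langle \delta_o, f(A_G)\delta_o\rangle]$ for the root $o$; the hypothesis $\mu_G(x,\infty)\le\delta$ plays the role of "few eigenvalues above the window.'' The key point is that all the operations in the finite proof -- restricting to a ball $B_r(o)$, building a localized near-eigenvector supported there, and counting near-orthogonal such vectors -- are \emph{local}, happening inside balls of radius $O_\Delta(1)$, so they make sense vertex-by-vertex and can be averaged against the unimodular measure. The extension of Cauchy interlacing is needed to control how the spectral measure of the finite induced subgraph on $B_r(o)$ relates to the global spectral measure $\mu_G$ (finite rank / bounded boundary perturbations move eigenvalues in a controlled, interlacing-like way), and the net-selection lemma -- in the style of local distributed algorithms \cite{NO08} -- is what lets one choose, measurably and equivariantly, a sparse-enough set of "centers'' whose radius-$r$ balls are disjoint, so that the localized near-eigenvectors attached to distinct centers are genuinely orthogonal and the finite-dimensional counting bound applies after averaging.

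I expect the main obstacle to be exactly this last transfer: in the finite setting one picks an orthonormal basis of the top eigenspace and uses a \emph{global} pigeonhole/averaging over vertices to find many disjoint balls each carrying a near-eigenvector with substantial mass; in the unimodular setting there is no global basis and no finite dimension to count against, so one must argue purely measure-theoretically, using the mass-transport principle to "charge'' the assumed spectral mass $\mu_G[(1-\theta)x,x] > \epsilon$ to individual balls, and then the net-selection lemma to disjointify without losing more than a constant factor in $\epsilon$. Making the disjointification equivariant (so that it does not break unimodularity) while keeping the density of centers high enough is the delicate step, and it is where the ideas borrowed from distributed local graph algorithms do the work. Once a positive density of pairwise non-interacting balls each hosts a near-eigenvector of mass $\gtrsim_\Delta 1$ (in the appropriate local normalization) whose associated eigenvalue sits in $[(1-\theta)x,x]$ while only $\delta$-mass lies above, the polynomial-approximation contradiction is run locally on each ball just as in the finite case, and choosing $\theta$ small relative to the spectral gap and $\delta$ small relative to $\epsilon$ closes the argument. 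A final routine point is uniformity of the constants: because every estimate lives on balls of radius depending only on $\Delta$ (and on the ratio $w_{\max}/w_{\min}$), the resulting $\theta, \delta$ depend only on $\Delta, \epsilon, w_{\min}, w_{\max}$, as claimed, and the same $\theta, \delta$ serve both the finite "$\abs G \ge 1/\delta$'' regime and the infinite unimodular regime.
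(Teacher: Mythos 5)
Your overall architecture matches the paper's: prove a finite statement first, then transfer to unimodular random graphs using an interlacing theorem, a local (distributed-algorithm style) selection of vertex subsets, and the mass transport principle. However, the engine you describe for the finite case is not the argument of \cite{JTYZZ21}, and as described it does not close. There is no Chebyshev/polynomial approximation of the spectral projector and no extraction of localized near-eigenvectors anywhere in the proof. The actual mechanism (\cref{prop:finite-param}) is: delete a sparse $r$-net $W$ (a \emph{dominating} set of density $\approx 1/r$) together with the set $U$ of vertices $v$ with $\lambda_1(B_G(v,s+1))>x$; after deletion every radius-$s$ ball has spectral radius at most $(x^{2r}-w_{\min}^{2r})^{1/(2r)}$ (\cref{lem:rad-drop}, ``net removal lowers spectral radius''); the trace/moment inequality $m_H[(1-\theta)x,x]\,((1-\theta)x)^{2s}\le \tr A_H^{2s}\le \sum_v \lambda_1(B_H(v,s))^{2s}$ (\cref{lem:local-global}) then forces the window to contain few eigenvalues of $H$, and Cauchy interlacing transfers this back to $G$. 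Your proposed substitute---count near-orthogonal localized vectors against the dimension of a single bounded-radius ball---cannot work: a radius-$R$ ball supports up to $\abs{B(v,R)}$ orthonormal vectors and there are $n$ centers, so the count gives $n\Delta^{O(R)}$, which is no contradiction with an eigenspace of dimension $\epsilon n$. The hypothesis $\mu_G(x,\infty)\le\delta$ is not used as a ``spectral gap'' for polynomial approximation; it is used only to bound the density of $U$, via a separated subset $U_0\subseteq U$ whose disjoint balls each contribute an eigenvalue above $x$ (\cref{lem:sep-large-rad}).

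You have also swapped the roles of the two selection lemmas. The $r$-net is a small \emph{covering} set that gets \emph{removed} so that every surviving vertex is close to a deleted vertex (this is what kills closed-walk counts); it is not a packing of disjoint balls hosting test vectors. The packing (an $s$-separated set, \cref{lem:unimod-sep}) appears only in bounding $\PP(o\in U)$. Likewise, in the infinite case the interlacing theorem (\cref{thm:interlacing}) compares $\mu_G$ with $\mu_H$ where $H$ is the global infinite graph obtained by deleting the low-density random set $U\cup W$---not the spectral measure of a finite ball with that of $G$; the local-to-global link is instead the exact moment identity $\int y^{2r}\,d\mu_G=\EE\angs{1_o,A_G^{2r}1_o}$. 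To repair your proposal you would need to replace the localization-and-counting step with the net-removal-plus-moment-method argument (or supply a genuinely different counting scheme, which you have not done).
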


We make precise the quantitative dependencies in \cref{sec:bounds}. Here is a preview.
Given $\Delta, w_{\max}, w_{\min}$ as in \cref{thm:intro}, there exists a constant $C > 0$ such that
\[
\mu_G[(1-\theta)x,x] \lesssim 1/\log(1/\theta)
\quad \text{whenever }\mu_G(x,\infty) \le  e^{- C/\theta} \text{ and } n > C \log (1/\theta).
\]
Furthermore, if the graph is an expander, then the bound $\mu_G[(1-\theta)x,x] \lesssim 1/\log(1/\theta)$ can be improved to 
\[
\mu_G[(1-\theta)x,x] \lesssim \theta^c.
\]
for some constant $c>0$ that depends only on $\Delta, w_{\max}, w_{\min}$ and the expansion ratio.

In particular, in the infinite setting, this rules out the possibility of having an atom at the top.

\begin{corollary}[No atom at the top] \label{cor:atom-top}
	Let $G$ be an infinite connected edge-weighted unimodular random graph with bounded degrees and edge weights lying in $[w_{\min}, w_{\max}]$ for some $0 < w_{\min} \le w_{\max}$.
	If $x > 0$ satisfies $\mu_G(x,\infty) = 0$, then $\mu_G(\set{x}) = 0$.
\end{corollary}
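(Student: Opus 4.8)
The plan is to obtain this as an immediate corollary of the qualitative main theorem \cref{thm:intro}, with essentially no additional work. Fix a degree bound $\Delta$ for $G$ together with the weight bounds $0 < w_{\min} \le w_{\max}$ supplied by the hypotheses, and let $\epsilon > 0$ be arbitrary. Applying \cref{thm:intro} to $G$ with these values of $\Delta, \epsilon, w_{\min}, w_{\max}$ produces $\theta = \theta(\epsilon) > 0$ and $\delta = \delta(\epsilon) > 0$ such that $\mu_G(y, \infty) \le \delta$ forces $\mu_G[(1-\theta)y, y] \le \epsilon$; note that the finite-size caveat ``$\abs{G} \ge 1/\delta$'' in that theorem is vacuous here, since $G$ is an infinite unimodular random graph. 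Taking $y = x$ and using the hypothesis $\mu_G(x,\infty) = 0 \le \delta$, we obtain $\mu_G[(1-\theta)x, x] \le \epsilon$.

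Now since $\theta > 0$ and $x > 0$ we have $(1-\theta)x < x$, so $\set{x} \subseteq [(1-\theta)x, x]$; by monotonicity of the subprobability measure $\mu_G$ defined in \cref{sec:infinite}, it follows that $\mu_G(\set{x}) \le \mu_G[(1-\theta)x, x] \le \epsilon$. As $\epsilon > 0$ was arbitrary — and the dependence $\theta = \theta(\epsilon)$ is harmless, since for each fixed $\epsilon$ we only need the resulting numerical bound $\mu_G(\set{x}) \le \epsilon$ — we conclude $\mu_G(\set{x}) = 0$. Thus there is no genuine content to prove beyond \cref{thm:intro} itself: the entire difficulty, namely lifting the finite-graph non-concentration estimate to unimodular random graphs via the interlacing and net-selection arguments advertised in the introduction, is absorbed into that theorem, and the hard part of this corollary is simply invoking it correctly.
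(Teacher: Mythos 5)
Your proof is correct and matches the paper's intended deduction exactly: the corollary is meant to follow immediately from \cref{thm:intro} (equivalently, the quantitative bound $\mu_G[(1-\theta)x,x]\lesssim 1/\log(1/\theta)$ stated right after it) by letting the window shrink and using monotonicity of $\mu_G$, and your handling of the $\epsilon$-dependent $\theta$ and the vacuous finite-size hypothesis is accurate.
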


Quantitatively, in the setting of \cref{cor:atom-top},
\[
\mu_G[(1-\theta)x,x] \lesssim 1/\log(1/\theta).
\]
For expanders, 
\[
\mu_G[(1-\theta)x,x] \lesssim \theta^c,
\]
where $c > 0$ depend on the degree bound, edge weight bounds, and expansion ratio. See \cref{sec:bounds} for precise statements.

\begin{remark}
It is possible to have an atom at the bottom of the spectrum \cite[Example 8.3]{JTYZZ2}.    
\end{remark}

If a unimodular random graph $G$ is the local limit of finite graphs $G_n$, then the spectral distribution converges $\mu_{G_n}(I) \to \mu_{G}(I)$ on every interval $I$, as shown by Ab\'ert, Thom, and Vir\'ag~\cite{ATV}.
However, it is a major open problem whether every unimodular random graph can be obtained as a local limit of finite graphs. Specializing to Cayley groups, a positive solution to this open problem would prove that all groups are \emph{sofic} (itself a major open problem). A minor weakening of the soficity problem for groups is the problem of whether every group is \emph{hyperlinear} (meaning that it has an asymptotically injective sequence of approximate homomorphisms with values in the unitary group). This problem on hyperlinearity of groups is a special case of the Connes' embedding problem (first posed in \cite{Connes}) by \cite{Radul}. A preprint claiming a negative solution to the Connes embedding problem was recently announced in \cite{MIP*=RE}. It is unclear whether the methods of \cite{MIP*=RE} can be used to produce non-hyperlinear or non-sofic groups.

\begin{remark}[Quantitative bounds] \label{rem:bounds}
There are many open questions about quantitative bounds.
Here we consider the setting of fixed maximum degree $\Delta$.
We do not know if the bound $O_\Delta(n/\log\log n)$ in \cref{thm:2nd-eig} can be improved.
Haiman, Schildkraut, Zhang, and Zhao \cite{HSZZ22} gave a lower bound construction with second eigenvalue multiplicity $\Omega(\sqrt{n/\log n})$.
Also, they constructed connected bounded degree graphs with $O(n/\log\log n)$ eigenvalues within $O(1/\log n)$ of the second largest eigenvalue.

For the normalized adjacency matrix $D_G^{-1/2} A_G D_G^{-1/2}$ of a graph $G$, 
 McKenzie, Rasmussen, and Srivastava~\cite{MRS21} proved that the second largest eigenvalue multiplicity is at most $\Delta^{7/5} n (\log\log n)^{O(1)} / (\log n)^{1/5}$, which, for fixed $\Delta$, improves the dependence on $n$ from $n/\log\log n$ to $n /(\log n)^c$.
It is unclear whether ideas from \cite{MRS21} can be adapted to the adjacency matrix.

A related problem is to improve the quantitative dependencies among the parameters in \cref{thm:intro}, which are made precise in \cref{sec:bounds}.
\end{remark}

Our work has also interesting applications to the spectrum of Cayley graphs as well as the decay of the return probabilities in non-amenable groups. 
After restricting our attention to non-amenable regular unimodular random graphs we prove a stronger variant of Theorem \ref{thm:unimod-main}. 
As we discuss in Section \ref{sec:non amenable}, our results are optimal in that one can produce examples of unimodular random graphs with a polynomial lower bound on $\mu_G[\rho(1-\theta),\rho]$ where $\rho$ is the spectral radius. In fact, Cayley graphs of free groups or hyperbolic groups give examples. See Section \ref{sec:non amenable} for how this is related to the \emph{rapid decay property} for groups. 
This result yields an interesting estimate on the return probabilities of a random walk on the Cayley graphs of non-amenable groups. 

\begin{theorem}\label{thm:decay on return probs intro}
     Let $\Gamma$ be a non-amenable group generated by a finite symmetric set $S$. Let $p_{2n}$ be the probability the the standard random walk on the Cayley graph ${\rm Cay}(\Gamma,S)$ returns to the origin at time $2n$. Let $\rho:=\lim_{n\to\infty} dp_{2n}^{1/2n}$. There exists a constant $C = C_{\Gamma, S} > 0$ such that \[p_{2n}\lesssim_{\Gamma, S} (\rho/d)^{2n} n^{-C}.\]   
\end{theorem}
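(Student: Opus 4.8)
The plan is to translate the return probability into the $2n$-th moment of the spectral distribution of the Cayley graph, replace that distribution by a symmetric one (the spectral distribution of the bipartite double cover) so that the expander form of our main spectral non-concentration estimate controls both ends of the spectrum at once, and then balance a one-parameter split of the moment. Concretely, write $G={\rm Cay}(\Gamma,S)$, $d=\abs{S}$, and let $A$ be the adjacency operator of $G$ on $\ell^2(\Gamma)$; the standard walk has transition operator $A/d$, so
\[
p_{2n}=\ang{\delta_e,(A/d)^{2n}\delta_e}=d^{-2n}\int\lambda^{2n}\,d\mu_G(\lambda),
\]
where $\mu_G$ is the spectral distribution of $G$ viewed as a (deterministic, vertex-transitive, hence unimodular) random graph. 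Since $G$ is vertex-transitive, $\mathrm{supp}\,\mu_G=\mathrm{spec}(A)$, so $dp_{2n}^{1/2n}\to\|A\|$ and therefore $\rho=\|A\|$ is the right endpoint of $\mathrm{supp}\,\abs{\mu_G}$; non-amenability of $\Gamma$ is precisely the statement $\rho<d$ (Kesten). I also record the trivial bound $p_{2n}\le(\rho/d)^{2n}$, which comes from $\abs{\lambda}\le\rho$ on $\mathrm{supp}\,\mu_G$ and will dispose of finitely many values of $n$ at the end.

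Next I symmetrize. If $G$ is bipartite, then $\mu_G$ is already symmetric about $0$ (conjugate $A$ by the diagonal $\pm 1$ function on the two sides), and I set $H:=G$. If $G$ is not bipartite, let $H$ be its bipartite double cover, equivalently $H={\rm Cay}(\Gamma\times\ZZ/2\ZZ,\,S\times\{1\})$; then $H$ is connected, $d$-regular and vertex-transitive, its adjacency operator is $\left(\begin{smallmatrix}0&A\\A&0\end{smallmatrix}\right)$ with spectrum $\{\pm\lambda:\lambda\in\mathrm{spec}(A)\}$, so $\|A_H\|=\rho$ and $\mu_H$ is the symmetrization of $\mu_G$ (its even moments coincide with those of $\mu_G$, its odd moments vanish). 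In either case $H$ is a connected vertex-transitive (hence unimodular) Cayley graph of a non-amenable group ($\Gamma\times\ZZ/2\ZZ$ surjects onto $\Gamma$), hence a bounded-degree expander; $\mu_H$ is a symmetric probability measure on $[-\rho,\rho]$; and $\int\lambda^{2n}\,d\mu_H=\int\lambda^{2n}\,d\mu_G=d^{2n}p_{2n}$. Applying the expander form of our main theorem (the quantitative bounds of \cref{sec:bounds}, cf.\ the expander case of \cref{cor:atom-top} and the stronger variant of \cref{thm:unimod-main} for non-amenable regular unimodular random graphs) to $H$ with $x=\rho$ --- note $\mu_H(\rho,\infty)=0$ --- produces constants $c>0$ and $\theta_0>0$ depending only on $d$ and the Cheeger constant of $H$, hence only on $\Gamma$ and $S$, with $\mu_H[(1-\theta)\rho,\rho]\lesssim_{\Gamma,S}\theta^{c}$ for all $0<\theta\le\theta_0$; by symmetry, $\mu_H(\{\abs{\lambda}>(1-\theta)\rho\})=2\mu_H((1-\theta)\rho,\rho]\lesssim_{\Gamma,S}\theta^{c}$.

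Finally I balance the split
\[
d^{2n}p_{2n}=\int_{\abs{\lambda}\le(1-\theta)\rho}\!\!\lambda^{2n}\,d\mu_H+\int_{\abs{\lambda}>(1-\theta)\rho}\!\!\lambda^{2n}\,d\mu_H\le\big((1-\theta)\rho\big)^{2n}+\rho^{2n}\,\mu_H(\{\abs{\lambda}>(1-\theta)\rho\}),
\]
which, using $1-\theta\le e^{-\theta}$, gives $p_{2n}\lesssim_{\Gamma,S}(\rho/d)^{2n}\big(e^{-2n\theta}+\theta^{c}\big)$ for $0<\theta\le\theta_0$. Choosing $\theta=\theta_n:=\tfrac{c\log n}{n}$ (which is $\le\theta_0$ once $n$ is large) yields $e^{-2n\theta_n}=n^{-2c}$ and $\theta_n^{c}\lesssim_{\Gamma,S}n^{-c}(\log n)^{c}$, hence $p_{2n}\lesssim_{\Gamma,S}(\rho/d)^{2n}n^{-c}(\log n)^{c}\le(\rho/d)^{2n}n^{-c/2}$ for all $n$ beyond some $n_0=n_0(\Gamma,S)$; the remaining $n\le n_0$ are absorbed into the implied constant via $p_{2n}\le(\rho/d)^{2n}$. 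This is the asserted bound with $C=c/2$.

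The one step that genuinely requires care is the reduction to a symmetric spectral measure: one must check that the double cover $H$ is still connected (this is where non-bipartiteness of $G$ enters, the bipartite case being handled by the direct symmetry of $\mu_G$), that $H$ is again a Cayley graph of a non-amenable group so that the expander hypothesis of the main theorem holds with constants controlled by $\Gamma$ and $S$, and that its spectral radius and even moments match those of $G$. The moment identity $d^{2n}p_{2n}=\int\lambda^{2n}\,d\mu_G$, the crude bound $\abs{\lambda}\le\rho$, and the choice $\theta_n\asymp(\log n)/n$ are routine.
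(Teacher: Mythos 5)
Your argument is correct, and its skeleton --- rewrite $d^{2n}p_{2n}$ as the $2n$-th moment of $\mu_G$, invoke a polynomial bound $\mu[(1-\theta)\rho,\rho]\lesssim\theta^{c}$ at the top of the spectrum, split the moment integral at $(1-\theta)\rho$ and optimize $\theta\asymp(\log n)/n$ --- is exactly the paper's (the paper packages the last step as Proposition~\ref{prop:return-decay}(b), fed by \cref{thm:RegularExp}). The one place you genuinely diverge is the handling of the negative end of the spectrum, which the top-of-spectrum estimate does not control and which contributes to $\int t^{2n}\,d\mu$ just as much as the positive end. You symmetrize by passing to the bipartite double cover ${\rm Cay}(\Gamma\times\ZZ/2\ZZ,\,S\times\{1\})$, checking connectivity (with the bipartite case of $G$ handled separately), non-amenability, and equality of spectral radius and even moments; the paper instead works on $G$ itself and splits on whether all odd return probabilities vanish: if so, Stone--Weierstrass shows $\mu_G$ is symmetric; if some $p_k>0$ with $k$ odd, it uses $p_{2n}\le p_k^{-1}p_{2n+k}$, where the odd power $t^{2n+k}$ makes the negative part of the integral nonpositive and so only the positive tail matters. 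Your route is arguably more conceptual and stays entirely within the Cayley-graph/expander framework of \cref{thm:unimod-expander}; the paper's trick avoids constructing a new graph and, combined with \cref{thm:RegularExp}, yields the sharper exponent $C=\frac{\log d}{\log\rho}-1-\varepsilon$ rather than your $c/2$ (irrelevant for the statement as phrased, which only asks for some $C>0$). Both case splits (bipartite vs.\ not; all odd $p_k$ zero vs.\ not) are in fact the same dichotomy in disguise.
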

In \cite[Corollary 17.8]{WolfgangRW}, similar results to the above Theorem are given for the special case of free products. Namely, suppose that $(\Gamma_{i})_{i\in I}$ are finitely generated groups and that $I$ is a finite set. Let $\Gamma=\ast_{i\in I}\Gamma_{i}$ be a free product with each $\Gamma_{i}$ nontrivial and that either $|I|\geq 3$ or that $|I|=2$ and $\prod_{i\in I}(|\Gamma_{i}|-2)\geq 1$ (this is equivalent to assuming that $\Gamma$ is nonamenable). Let $S_{i}$ be finite, symmetric generating sets and let $S=\bigcup_{i}S_{i}$. Then for the standard random walk on ${\rm Cay}(\Gamma,S)$ we have that 
\[p_{2n}\lesssim \rho^{2n}n^{-3/2}\]
if each $\Gamma_{i}$ has polynomial growth of degree at most $4$. If $|I|=2$ and $\Gamma_{i}=\ZZ^{d}$ with $d\geq 5$, then  with $S_{i}$ the standard generating set of $\ZZ^{d}$ we have  by \cite[Theorem 17.13]{WolfgangRW} that
\[p_{2n}\thicksim \rho^{2n}n^{-d/2}.\]

\emph{Notation and definitions.}
We write $G[U]$ for the subgraph of $G$ induced by the vertex subset $U$. 
We write $B_G(v,r)$ for the subgraph induced by all vertices at distance at most $r$ from $v$ (we ignore edge-weights when computing distance in $G$).
An \emph{$r$-net} of $G$ is a subset $W$ of vertices such that every vertex of $G$ is within distance $r$ of $W$.
A set of vertices is \emph{$s$-separated} if every pair of vertices is separated by at least $s$ edges.
For a finite graph $G$, we write $\lambda_1(G)$ for the largest eigenvalue of $A_G$.

\subsection*{Organization}
In \cref{sec:finite} we prove the result for finite graphs, following \cite{JTYZZ21}. 
In \cref{sec:infinite} we recall the notion of a unimodular random graph and state the results for the infinite setting.
In \cref{sec:interlacing} we prove a Cauchy eigenvalue interlacing theorem for unimodular random graphs.
In \cref{sec:local} we prove some lemmas about locally selecting a random vertex subset.
In \cref{sec:unimod-pf} we prove the main results for unimodular random graphs.

\section{Quantitative bounds on spectral measure near the top} \label{sec:bounds}

Here we state the quantitative bounds for eigenvalue multiplicity in \cref{thm:intro}.
In the theorem statements, $\lesssim$ hides an absolute constant factor.

\begin{theorem}[Finite graphs] \label{thm:finite-main}
	Let $G$ be a connected $n$-vertex edge-weighted graph with maximum degree at most $\Delta \ge 2$
	 and all edge-weights in the interval $[w_{\min}, w_{\max}]$ with $0 < w_{\min} \le  w_{\max}$.
	Define $\wt \Delta = \Delta w_{\max}/w_{\min}$.
    Let $x, \theta > 0$.
	Then,
	\[
	\mu_G[(1-\theta) x, x] \lesssim \frac{1}{\log_{\wt\Delta} (1/\theta)}
	\quad\text{whenever }
	\mu_G(x, \infty) \le \wt\Delta ^{-10/\theta}
         \text{ and } 
         n \ge \log_{\wt\Delta} (1/\theta).
	\]
\end{theorem}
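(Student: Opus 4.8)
The strategy is to follow the argument in \cite{JTYZZ21} for Theorem~\ref{thm:2nd-eig}, extracting the quantitative dependence on $\wt\Delta$ and $\theta$, and replacing "the second largest eigenvalue" by "the interval $[(1-\theta)x, x]$" under the hypothesis that at most $\wt\Delta^{-10/\theta}$ fraction of eigenvalues exceed $x$. The first step is to set $r := \lfloor \log_{\wt\Delta}(1/\theta) \rfloor$ (roughly), so that balls of radius $r$ have at most $\wt\Delta^{r} \approx 1/\theta$ vertices, and to pick a maximal $2r$-separated set $S$ in $G$; since $G$ is connected with $n \ge r$ vertices, a standard packing/covering argument gives $|S| \gtrsim n / \wt\Delta^{2r}$, and the balls $B_G(v, r)$ for $v \in S$ are vertex-disjoint. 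The key spectral input is Cauchy interlacing: passing to the disjoint union $H := \bigsqcup_{v \in S} G[B_G(v,r)]$, which is an induced subgraph of $G$ on a subset of vertices, each eigenvalue of $G$ in $[(1-\theta)x, x]$ forces, after accounting for the codimension, many eigenvalues of $H$ to be at least $(1-\theta)x$; combined with the hypothesis $\mu_G(x,\infty)$ small one concludes that $H$ has $\gtrsim m_G[(1-\theta)x,x]$ eigenvalues that are $\ge (1-\theta)x$ but, because $H$ is a disjoint union of small pieces, this will be shown to be impossible unless $m_G[(1-\theta)x,x]$ is small.

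The heart of the matter is the local argument bounding, for a single ball $B := B_G(v,r)$, the number of eigenvalues of $G[B]$ that lie above $(1-\theta)\lambda_1(G[B])$ (or above $(1-\theta)x$). Here one uses the radius: if $G[B]$ had an eigenvalue $\lambda \ge (1-\theta)\lambda_1(G[B])$ with eigenvector $f$, then by a telescoping/energy argument along paths from $v$, the $\ell^2$ mass of $f$ must decay geometrically with distance from the "center" of mass, contradicting that the ball has radius only $r \approx \log_{\wt\Delta}(1/\theta)$; quantitatively this is where the bound $\theta \ge \wt\Delta^{-c r}$ enters and forces the eigenvalue count in the window to be small. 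One packages this into the statement that each small piece contributes at most, say, $O(1)$ eigenvalues in $[(1-\theta)x, x]$ per unit of "effective dimension", so summing over the $|S| \gtrsim n/\wt\Delta^{2r}$ disjoint balls and dividing by $n$ yields $\mu_G[(1-\theta)x, x] \lesssim \wt\Delta^{2r}/\wt\Delta^{r'} \lesssim 1/r \asymp 1/\log_{\wt\Delta}(1/\theta)$ after optimizing the radius; the hypothesis $\mu_G(x,\infty) \le \wt\Delta^{-10/\theta}$ is used to absorb the interlacing codimension, which is at most $n \cdot \mu_G(x,\infty) \cdot (\text{ball size})$ or so, so the constant $10$ in the exponent is chosen to beat $\wt\Delta^{2r} \le \wt\Delta^{2/\theta}$.

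The main obstacle, and the place requiring the most care, is the local decay estimate: showing that an eigenvector of $G[B_G(v,r)]$ with eigenvalue close to the top cannot be supported on a ball of radius only $\approx \log_{\wt\Delta}(1/\theta)$ without its eigenvalue being bounded away from the top by more than $\theta \lambda_1$. This is precisely the quantitative core of \cite{JTYZZ21}; I would isolate it as a lemma of the form: \emph{if $H$ is connected with maximum degree $\le \wt\Delta$ and diameter $\le D$, then $H$ has at most $O(1)$ eigenvalues in $[(1-\wt\Delta^{-cD})\lambda_1(H), \lambda_1(H)]$} (with edge weights handled by comparing to the unweighted degree via $w_{\max}/w_{\min}$), and then the theorem follows by the packing-plus-interlacing bookkeeping above. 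A secondary technical point is to handle edge weights cleanly: distances and $\wt\Delta$ are defined via the unweighted graph, but the spectral quantities use $A_G$, so the decay lemma must be proved for weighted adjacency operators with weights in $[w_{\min}, w_{\max}]$, which only costs constant factors absorbed into $\wt\Delta = \Delta w_{\max}/w_{\min}$ and the implied constants.
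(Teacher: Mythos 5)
Your plan does not reconstruct the paper's argument, and as written the central step fails. The fatal point is the interlacing step: you pass to the induced subgraph $H=\bigsqcup_{v\in S}G[B_G(v,r)]$ on the union of disjoint $r$-balls around a $2r$-separated set. That union misses a constant (indeed overwhelming) fraction of $V(G)$, so the Cauchy interlacing codimension is $\Theta(n)$, and the inequality $m_H[(1-\theta)x,\infty)\ge m_G[(1-\theta)x,\infty)-(n-|V(H)|)$ gives no information; eigenvalues of $G$ in the window simply do not transfer to eigenvalues of the small disjoint balls. The paper does the opposite: it deletes only a \emph{low-density} set, namely an $r$-net $W$ of density $O(1/r)$ (\cref{lem:net}) together with a small ``bad'' set $U$, so that interlacing costs only $2(|U|+|W|)/n$ --- and this removal cost is precisely the source of the final $1/\log_{\wt\Delta}(1/\theta)$ bound. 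The counting engine is then not an eigenvector-decay or diameter-gap lemma but two facts you don't use: (i) the net-removal lemma (\cref{lem:rad-drop}), which says deleting an $r$-net drops $\lambda_1^{2r}$ by at least $w_{\min}^{2r}$ because every vertex gains a closed $2r$-walk through the net, so after deletion every radius-$s$ ball of $H$ has $\lambda_1\le(x^{2r}-w_{\min}^{2r})^{1/(2r)}<(1-\theta)x$; and (ii) the trace/moment bound $m_H[(1-\theta)x,x]\,((1-\theta)x)^{2s}\le\tr A_H^{2s}\le\sum_v\lambda_1(B_H(v,s))^{2s}$ (\cref{lem:local-global}). Your proposed lemma that a connected graph of diameter $D$ has $O(1)$ eigenvalues within $\wt\Delta^{-cD}\lambda_1$ of the top is not what \cite{JTYZZ21} proves and would not help here even if true, since the relevant eigenvectors of $G$ need not localize on individual balls.

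Two further mismatches. First, there are two separate scales in the actual proof, $s=\lfloor 1/\theta\rfloor$ (the moment exponent, i.e.\ the radius of the balls whose spectral radii are controlled) and $r=\lfloor\tfrac{1}{10}\log_{\wt\Delta}s\rfloor$ (the net radius); you collapse them into one radius $\approx\log_{\wt\Delta}(1/\theta)$, which makes the first term of \cref{prop:finite-param} impossible to beat. Second, the hypothesis $\mu_G(x,\infty)\le\wt\Delta^{-10/\theta}$ is not used to ``absorb an interlacing codimension'' of the kind you describe; it bounds the density of the set $U$ of vertices $v$ with $\lambda_1(B_G(v,s+1))>x$ (a maximal separated subset of $U$ yields that many disjoint balls, hence that many eigenvalues of $G$ above $x$), and removing $U$ is needed because the net-removal argument only lowers the spectral radius of balls that start with $\lambda_1\le x$. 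You would need to rebuild the proof around \cref{prop:finite-param} rather than around disjoint-ball interlacing.
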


To deduce the $O(n\log \Delta/\log\log n)$ upper bound on second eigenvalue multiplicity (\cref{thm:2nd-eig}), apply \cref{thm:finite-main} with $x = \lambda_2(G)$ and $\theta = 10/\log_{\wt\Delta} n$ to get 
$\mu_G[(1-\theta)x, x] \lesssim n/\log_{\wt\Delta}\log_{\wt\Delta}$.

We have the following improved eigenvalue multiplicity bound for expander graphs.
We say that an $n$-vertex graph $G$ is a \emph{$c$-expander} if for every $S \subseteq V(G)$ with $\abs{G} \le n/2$, there are at least $c \abs{S}$ vertices not in $S$ but adjacent to some vertex in $S$.

\begin{theorem}[Finite expander graphs] \label{thm:finite-expander}
	Let $G$ be a connected $n$-vertex edge-weighted graph with maximum degree at most $\Delta \ge 2$
	 and all edge-weights in the interval $[w_{\min}, w_{\max}]$ with $0 < w_{\min} \le  w_{\max}$.
  Suppose $G$ is a $c$-expander for some $0 < c \le 1$.
	Define $\wt \Delta = \Delta w_{\max}/w_{\min}$.
	Let $x, \theta > 0$.
	Then
	\[
	\mu_G[(1-\theta) x, x] \lesssim \theta^{c/(40\log \wt\Delta)}
	\quad\text{whenever }
        \mu_G(x, \infty) \le \wt\Delta^{-10/\theta}
         \text{ and } 
         n \ge 2\theta^{-c/(10\log \wt\Delta)}.
	\]
\end{theorem}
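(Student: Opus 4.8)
The plan is to run the machinery behind the proof of \cref{thm:finite-main} — which follows \cite{JTYZZ21} — and to sharpen the single step that is sensitive to the global geometry of $G$, using the expansion hypothesis. Recall the shape of that argument. Set $\beta = \mu_G[(1-\theta)x,x]$, let $\psi_1,\psi_2,\dots$ be an orthonormal eigenbasis of $A_G$ with eigenvalues $\lambda_1\ge\lambda_2\ge\cdots$, and observe that $\mu_G$ is the average over vertices $v$ of the local spectral measures $\nu_v=\sum_i\abss{\angs{\psi_i,e_v}}^2\delta_{\lambda_i}$, whose moments up to order $2r$ are the closed-walk counts of length $\le 2r$ from $v$ and so depend only on $B_G(v,r)$. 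Averaging gives $\beta=\frac1n\sum_v\nu_v([(1-\theta)x,x])$, so at least $\beta n/2$ vertices $v$ are \emph{good}, meaning $\nu_v([(1-\theta)x,x])\ge\beta/2$. For a good $v$, a moment comparison between $\nu_v$ and the spectral measure of $B_G(v,r)$ — using the hypothesis $\mu_G(x,\infty)\le\wt\Delta^{-10/\theta}$ to kill the spectral mass above $x$, and a suitable test polynomial of $A_G$ supported on $B_G(v,r)$ to kill the mass at the negative end — forces $\lambda_1(B_G(v,r))>x$ once $r$ is a suitable multiple of $1/\theta$. Extracting a maximal $(2r+1)$-separated set of good vertices makes the balls $B_G(v,r)$ vertex-disjoint, and Cauchy interlacing applied to their disjoint union exhibits that many eigenvalues of $G$ above $x$, contradicting $\mu_G(x,\infty)\le\wt\Delta^{-10/\theta}$ unless $\beta$ is small. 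The only loss is in the separation step, where a maximal $(2r+1)$-separated subset of a set of $\beta n$ vertices can, in an arbitrary bounded-degree graph, be as small as $\beta n\cdot\wt\Delta^{-O(r)}$; optimizing $r$ against this produces $\beta\lesssim 1/\log_{\wt\Delta}(1/\theta)$.

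The improvement for a $c$-expander is meant to come entirely from carrying out the separation step more efficiently, using two features of expansion. First, by a weighted form of Cheeger's inequality a $c$-expander has a spectral gap $\lambda_1-\lambda_2\gtrsim_{c,\wt\Delta}\lambda_1$; after disposing of the trivial case $\mu_G(x,\infty)=0$ — where the window lies above $\lambda_2$ and hence $\beta\le 1/n$ — one may assume the window $[(1-\theta)x,x]$ sits just below $\lambda_2$, at the top of the bulk. Second, in a $c$-expander every radius-$\rho$ ball has at least $\min(n/2,(1+c)^\rho)$ vertices, so a set $S$ of $\beta n$ good vertices already has its $\rho_0$-neighborhood of size $\ge n/2$ for $\rho_0$ only of order $\log_{1+c}(1/\beta)$. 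The natural way to exploit this is to select a vertex-disjoint subfamily of the good balls that still covers a constant fraction of $V(G)$, so that the number of disjoint balls one feeds into Cauchy interlacing is $\beta n$ up to a factor $\wt\Delta^{O(\log_{\wt\Delta}(1/\beta))}$ rather than the much larger $\wt\Delta^{O(1/\theta)}$. Solving the resulting inequality against $\mu_G(x,\infty)\le\wt\Delta^{-10/\theta}$ for $\beta$ should then give
\[
\beta\lesssim\theta^{c/(40\log\wt\Delta)},
\]
with the hypothesis $n\ge 2\theta^{-c/(10\log\wt\Delta)}$ being exactly the room needed for the relevant balls to fit inside $G$ and for expansion to act; for $\theta$ bounded away from $0$ the asserted bound is $\gtrsim 1$ and follows by monotonicity in $\theta$.

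The step I expect to be the main obstacle is reconciling the two competing radii. The amplification that lifts $(1-\theta)x$ above $x$ in the moment comparison wants $r$ of order $1/\theta$ — or $\theta^{-1}\log(1/\beta)$ if one tracks $\beta$ honestly, which threatens a logarithmic circularity — while the covering radius at which the good set has spread over a constant fraction of $V(G)$ wants only about $c^{-1}\log(1/\beta)$. Making these cooperate, ideally with a single radius and without picking up an extra logarithmic factor, is what should pin the exponent down to $c/(40\log\wt\Delta)$, and is where the interplay among $\theta$, $c$, and $\wt\Delta$ must be handled with care; it is conceivable that a cleaner device than a straight separated-set argument (for instance a net-based selection, in the spirit of the one used later for unimodular random graphs) is needed here. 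A secondary subtlety is that a ball $B_G(v,r)$ inside an expander need not itself be an expander, so the bound $\lambda_1(B_G(v,r))>x$ has to be squeezed out of the window mass of $\nu_v$ alone, uniformly in the edge weights.
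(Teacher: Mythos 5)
There is a genuine gap: you have located the expander gain in the wrong step. In the argument underlying \cref{thm:finite-main} (i.e., \cref{prop:finite-param} combined with \cref{lem:finite-r-s}), the separated-set extraction loses only a factor $\Delta^{2(s+2)}$ with $s\approx 1/\theta$, and this loss is already completely absorbed by the hypothesis $\mu_G(x,\infty)\le\wt\Delta^{-10/\theta}$; it contributes a term $\wt\Delta^{-\Omega(1/\theta)}$, which is negligible in both theorems. The term that actually produces the $1/\log_{\wt\Delta}(1/\theta)$ bound is the density $\epsilon_\net$ of the $r$-net that gets deleted before the trace/moment step, which for a general connected graph is only $\approx 1/(r+1)$ with $r=\lfloor(1/10)\log_{\wt\Delta}(1/\theta)\rfloor$ (\cref{lem:net}). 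So improving the separated-set step cannot improve the final bound, and expansion does not help there anyway: expansion gives \emph{lower} bounds on ball sizes, whereas the separated-set loss is governed by \emph{upper} bounds $\abs{B_G(v,\rho)}\le\Delta^{\rho}$, which are unaffected. Likewise, the Cheeger-type spectral gap you invoke plays no role: the theorem is stated for arbitrary $x$ with $\mu_G(x,\infty)\le\wt\Delta^{-10/\theta}$, not for $x=\lambda_2$, and no gap is needed.

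The correct mechanism, which you gesture at only in your final parenthetical, is that a $c$-expander admits a much \emph{sparser} $r$-net: take a random vertex subset $W_0$ of density $p$ together with the set $W_1$ of vertices at distance more than $r$ from $W_0$; since every radius-$r$ ball has at least $(1+c)^r$ vertices (this is where $n\ge 2\theta^{-c/(10\log\wt\Delta)}$ enters, guaranteeing $(1+c)^r\le n/2$ for $r\le c^{-1}\log(n/2)$), each vertex lands in $W_1$ with probability at most $(1-p)^{(1+c)^r}$, and choosing $p=(1+c)^{-r/2}$ yields an $r$-net of density $\epsilon_\net\lesssim e^{-cr/4}\le\theta^{c/(40\log\wt\Delta)}$. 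Feeding this net into the unchanged machinery of \cref{prop:finite-param} and \cref{lem:finite-r-s}, with the same $s=\lfloor 1/\theta\rfloor$ and $r=\lfloor(1/10)\log_{\wt\Delta}(1/\theta)\rfloor$ as in the non-expander case, immediately gives $\mu_G[(1-\theta)x,x]\lesssim\epsilon_\net\lesssim\theta^{c/(40\log\wt\Delta)}$. In particular there is no tension between two radii to resolve: the moment order $s\approx 1/\theta$ and the net radius $r\approx\log_{\wt\Delta}(1/\theta)$ are distinct parameters in both theorems, and the expander hypothesis touches only the net.
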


In particular, setting $x = \lambda_2(G)$ and $\theta = 10/\log_{\wt\Delta} n$ as earlier gives us 
$\mu_G[(1-\theta) x, x] \lesssim 1 / (\log_{\wt\Delta} n)^{c/(40\log \wt\Delta)}$.

Next we state our results for unimodular random graphs. See \cref{sec:infinite} for definitions.

\begin{theorem}[Unimodular random graphs] \label{thm:unimod-main}
	Let $(G,o)$ be an infinite connected edge-weighted unimodular random graph with maximum degree at most $\Delta$
	 and all edge-weights in the interval $[w_{\min}, w_{\max}]$ with $0 < w_{\min} \le  w_{\max}$.
	Define $\wt \Delta = \Delta w_{\max}/w_{\min}$.
	Let $x, \theta> 0$. Then
	\[
	\mu_G[(1-\theta) x, x] \lesssim \frac{1}{\log_{\wt\Delta} (1/\theta)}
	\quad\text{whenever }
	\mu_G(x, \infty) \le \wt \Delta^{-10/\theta}.
	\]
\end{theorem}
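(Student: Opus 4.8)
The plan is to derive the infinite (unimodular) statement from the finite statement \cref{thm:finite-main} by a "sampling + interlacing" argument, thereby reducing the unimodular case to a sequence of finite graphs that approximate $(G,o)$ well enough in a spectral sense. The first step is to reduce to the simply case where the top of the spectrum of $A_G$ is normalized: fix $x,\theta>0$ with $\mu_G(x,\infty)\le\wt\Delta^{-10/\theta}$, and suppose for contradiction that $\mu_G[(1-\theta)x,x]\ge \epsilon$ for some $\epsilon$ that is large compared to $1/\log_{\wt\Delta}(1/\theta)$. We want to produce a finite graph $H$ of maximum degree at most $\Delta$ (with edge weights still in $[w_{\min},w_{\max}]$, so the same $\wt\Delta$) violating \cref{thm:finite-main}. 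The mechanism for building $H$ is to cut $(G,o)$ into finite pieces along an $s$-separated net: using the net-selection lemmas of \cref{sec:local}, choose (in a unimodular, i.e. local and reweighting-covariant, way) a random $r$-net $W$ that is also $s$-separated for suitable $r,s$ depending on $\theta$; the induced partition of $V(G)$ into Voronoi-type cells gives finite induced subgraphs, and we concatenate / take a large but finite union of these cells to form $H$. Connectivity of $H$ can be arranged by adding the bounded-degree structure between adjacent cells, or by working with one large cell $B_G(o,R)$ directly.

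The second step is the spectral comparison. We need two inequalities relating $\mu_G$ and $\mu_H$ near the top. For the upper bound on $\mu_H(x,\infty)$ — we must check that $H$ still satisfies the hypothesis "$\mu_H(x,\infty)\le \wt\Delta^{-10/\theta}$" (or something within a constant of it) — we use the Cauchy interlacing theorem for unimodular random graphs from \cref{sec:interlacing}: passing from $G$ to an induced subgraph only pushes eigenvalue mass down, up to a controlled boundary error proportional to the density of the cut edges, which is $O(1/r)$ by the net construction, hence negligible once $r$ is chosen large (polynomially or exponentially in $1/\theta$ as the quantitative statement allows, since there is no $n$-constraint here). For the lower bound — that $\mu_H[(1-\theta')x,x]$ remains $\gtrsim\epsilon$ for a slightly relaxed $\theta'$ — we again invoke interlacing in the other direction together with a test-function / trace argument: the spectral measure of $G$ restricted to $[(1-\theta)x,x]$ can be detected by the trace of $f(A_G)$ for a smooth bump $f$ supported near $x$, and because $f(A_G)$ is (approximately) localized — $f$ can be taken a polynomial of degree $O(1/\theta)\cdot\mathrm{poly}(1/\theta)$, so $f(A_G)$ has range $O(\mathrm{poly}(1/\theta))$ — its normalized trace is essentially unchanged when we delete only an $O(1/r)$-fraction of edges and restrict to a typical cell. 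Choosing $r$ large relative to the degree of $f$ makes the boundary contribution $\ll\epsilon$. This gives $\mu_H[(1-\theta)x + (\text{small}),x]\gtrsim\epsilon$, which after absorbing the error into $\theta$ (replacing $\theta$ by $\theta/2$, say, and tracking constants) contradicts \cref{thm:finite-main} applied to $H$ with this slightly smaller angle, since $\epsilon$ was chosen to exceed the permitted bound $\lesssim 1/\log_{\wt\Delta}(2/\theta)$.

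The main obstacle, and where the real work of \cref{sec:local} and \cref{sec:interlacing} is spent, is carrying out the cut-into-finite-pieces step \emph{unimodularly} and \emph{without an $n$-dependent loss}. In the finite theorem there is the constraint $n\ge\log_{\wt\Delta}(1/\theta)$; in the unimodular setting $n=\infty$ so this is free, but the price is that we must produce the approximating finite $H$ intrinsically from $(G,o)$ rather than by truncation, and the spectral measure $\mu_G$ of a unimodular random graph is defined via the expected spectral measure at the root (the von Neumann trace), so "restricting to a cell" must be phrased as a statement about this trace rather than about literal eigenvalue counting. Concretely, the delicate points are: (i) ensuring the net $W$ can be chosen by a local rule so that the resulting partition is itself a unimodular decoration of $(G,o)$, which is exactly what the distributed-algorithms-inspired lemma in \cref{sec:local} provides; (ii) getting a version of Cauchy interlacing valid for the $II_1$-type spectral measure, with an error term linear in the (expected) fraction of deleted edges — this is the content of \cref{sec:interlacing}; and (iii) checking that the polynomial approximation of the bump function $f$ has degree only polynomial in $1/\theta$, so that $r$ (and hence the "effective size" we would need in a finite truncation) stays within the budget allowed by the exponent $-10/\theta$ in the hypothesis. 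Once these three ingredients are in place, the contradiction argument above is routine bookkeeping of constants.
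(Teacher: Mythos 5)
Your plan is to reduce the unimodular statement to the finite one (\cref{thm:finite-main}) by cutting $(G,o)$ into finite pieces along a net and applying the finite theorem to a finite graph $H$ assembled from those pieces. This is not what the paper does, and more importantly it has a fatal gap: the claim that the density of cut edges is $O(1/r)$. The Voronoi cells of an $r$-net are indeed finite, but to make them spectrally independent you must delete every edge between distinct cells, and for a non-amenable graph this is impossible at small density no matter how large $r$ is. For instance, if $(G,o)$ is a $c$-expander (in particular a $d$-regular tree), every finite vertex set $S$ has at least $c\abs{S}$ outside neighbors, so \emph{any} partition of $V(G)$ into finite cells cuts a constant fraction of edges; the interlacing error is then $\Omega(1)$, not $o(1)$, and no contradiction with \cref{thm:finite-main} can be extracted. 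The fallback of "working with one large cell $B_G(o,R)$ directly" fails for the same reason: in a $d$-regular tree a constant fraction (about $(d-2)/(d-1)$) of the vertices of $B_G(o,R)$ lie on its boundary sphere for every $R$, so $\mu_{B_G(o,R)}$ does not approximate $\mu_G$. This non-hyperfiniteness obstruction is exactly why one cannot, in general, replace a unimodular random graph by a finite approximant — the paper even remarks that whether every unimodular random graph is a local limit of finite graphs is a major open problem — and the expander case is the one where the theorem is most interesting.

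The paper's actual route avoids finite approximation entirely. It proves an infinite analogue of the finite key proposition (\cref{prop:inf-param}): remove only the net $W$ itself (density $\le 1/r$ by the local captain/Voronoi selection of \cref{lem:unimod-net}) together with the locally-defined set $U$ of vertices with large local spectral radius (density $\le \delta\Delta^{2(s+2)}$ by \cref{lem:unimod-sep,lem:sep-large-rad}). The resulting graph $H$ may still be infinite; its spectral measure near $x$ is bounded by the moment method, using that $\int y^{2s}\,d\mu_H(y)=\EE\angs{1_o,A_H^{2s}1_o}$ is controlled by the local spectral radii $\lambda_1(B_H(o,s))$ (\cref{lem:unimod-local-global}), which have dropped below $x$ by the net-removal lemma. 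One then transfers back to $\mu_G$ via the mass-transport interlacing theorem (\cref{thm:interlacing}), whose error is the density of removed \emph{vertices}, which genuinely is small. If you want to salvage your write-up, the fix is not to build a finite $H$ at all but to run the finite argument's logic intrinsically on $(G,o)$ using these three ingredients.
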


We also have a stronger bound for expander graphs. 
We say that unimodular random graph $(G,o)$ is a \emph{$c$-expander} if almost surely for every finite $S \subseteq V(G)$, there are at least $c \abs{S}$ vertices not in $S$ but adjacent to some vertex in $S$.

\begin{theorem}[Expander unimodular random graphs] \label{thm:unimod-expander}
    Let $(G,o)$ be an infinite connected edge-weighted unimodular random graph with maximum degree at most $\Delta$
	 and all edge-weights in the interval $[w_{\min}, w_{\max}]$ with $0 < w_{\min} \le  w_{\max}$.
    Suppose $(G,o)$ is a $c$-expander for some $0 < c \le 1$.
	Define $\wt \Delta = \Delta w_{\max}/w_{\min}$.
	Let $x, \theta > 0$.
     Then
	\[
	\mu_G[(1-\theta) x, x] \lesssim \theta^{c/(40\log \wt\Delta)}
	\quad\text{whenever }
	\mu_G(x, \infty) \le \wt\Delta^{-10/\theta}.
    \]
\end{theorem}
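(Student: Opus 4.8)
\emph{Proof strategy.}
The plan is to revisit the proof of \cref{thm:unimod-main} and change exactly one estimate. That argument fixes a radius $R\asymp\log_{\wt\Delta}(1/\theta)$ — its precise value calibrated to the hypothesis $\mu_G(x,\infty)\le\wt\Delta^{-10/\theta}$ — then uses the local selection lemma of \cref{sec:local} to produce an automorphism-equivariant random vertex set $W\subseteq V(G)$ that is an $R$-net of $G$ (so that $(G,o,W)$ is again unimodular), and finally combines the unimodular Cauchy interlacing theorem of \cref{sec:interlacing} with a localization estimate for the induced subgraph $G\setminus W$ — whose only ingredients are the degree bound $\Delta$, the weight bounds, the radius $R$, and the smallness of $\mu_G(x,\infty)$, and in particular \emph{nothing about expansion} — to produce a bound of the shape
\[
\mu_G[(1-\theta)x,x]\ \lesssim\ \Prob[o\in W].
\]
In the general case one can only guarantee $\Prob[o\in W]\lesssim 1/R\asymp 1/\log_{\wt\Delta}(1/\theta)$, and this is sharp — on a bi-infinite path every equivariant $R$-net charges the root with probability $\gtrsim 1/(2R+1)$ — which is exactly the bound asserted in \cref{thm:unimod-main}. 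So for \cref{thm:unimod-expander} it suffices to bound $\Prob[o\in W]$ better when $G$ is a $c$-expander, everything else being inherited verbatim.

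This is where expansion enters. Iterating the vertex-expansion inequality gives $\abs{B_G(v,t)}\ge(1+c)^t$ for every vertex $v$ and every $t\ge0$, almost surely. Arrange (see below) that the selected $W$ is moreover $R$-separated. Then the balls $\{B_G(w,\floor{R/2})\}_{w\in W}$ are pairwise disjoint, each of size at least $(1+c)^{\floor{R/2}}$, and transporting one unit of mass from each vertex $v$ to the unique $w\in W$ (if any) with $v\in B_G(w,\floor{R/2})$, the mass-transport principle yields
\[
(1+c)^{\floor{R/2}}\,\Prob[o\in W]\ \le\ \EE\bigl[\abs{B_G(o,\floor{R/2})}\,\one_{o\in W}\bigr]\ \le\ 1,
\]
so $\Prob[o\in W]\le(1+c)^{-\floor{R/2}}$. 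Since $\log(1+c)\ge c/2$ for $0<c\le1$, this equals $\wt\Delta^{-\floor{R/2}\log(1+c)/\log\wt\Delta}$, and with $R\asymp\log_{\wt\Delta}(1/\theta)$ (absorbing the floor and the constants into the exponent) it is at most $\theta^{c/(40\log\wt\Delta)}$, which combined with the displayed inequality above is the claim. The finite statement \cref{thm:finite-expander} is proved the same way; there the side condition $n\ge 2\theta^{-c/(10\log\wt\Delta)}$ is exactly what ensures $\abs{B_G(v,\floor{R/2})}\ge(1+c)^{\floor{R/2}}$ is not blocked by $\abs{G}$, whereas no such condition is needed in the infinite unimodular setting.

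I expect the main obstacle to be producing $W$ locally — by a bounded-radius, equivariant measurable rule, as is needed for $(G,o,W)$ to be unimodular and for interlacing to apply — while keeping it $R$-separated with $\Prob[o\in W]\le(1+c)^{-\Omega(R)}$. A maximal $R$-separated set is automatically an $R$-net, but it cannot be computed from bounded-radius data (the usual obstruction to computing a maximal independent set locally). The construction of \cref{sec:local} circumvents this by iterating: in each round mark as selected the local minima, with respect to an i.i.d.\ labelling, of the subgraph induced by the still-uncovered vertices, then declare their $R$-balls covered; because $G$ expands, a constant fraction of the uncovered vertices is covered per round, so $O(R)$ rounds suffice, the rule has radius $O(R^2)$ and is equivariant, and the output $W$ is $R$-separated with the required root probability. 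Carrying out this analysis — verifying the per-round progress and tracking the resulting root probability — is where the expansion hypothesis does the real work, and it is the source of the loss reflected by the constant $40$, as compared with the $\approx c/(2\log\wt\Delta)$ exponent one would read off directly from a genuine maximal $R$-separated set. The interlacing theorem and the localization estimate for $G\setminus W$ are quoted unchanged from the proof of \cref{thm:unimod-main}.
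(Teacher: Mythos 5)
Your reduction is the paper's: with $s\asymp 1/\theta$ and $r\asymp\log_{\wt\Delta}(1/\theta)$, remove a locally selected low-density $r$-net together with a separated subset of the vertices whose $(s+1)$-balls have top eigenvalue above $x$, bound $\mu_H[(1-\theta)x,x]$ by the moment/localization argument, and transfer back via the unimodular interlacing theorem (\cref{thm:interlacing}); the resulting error is dominated by the net density, so the theorem does reduce to producing a local $r$-net of density $\lesssim\theta^{c/(40\log\wt\Delta)}$. Your mass-transport packing inequality for an $R$-separated set is also fine (modulo using radius $\floor{(R-1)/2}$ so the balls are genuinely disjoint).

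The gap is in producing that net. You require the net to be $R$-separated so that the packing argument applies, and you propose to build it by iterating ``select the local minima of an i.i.d.\ labelling among the uncovered vertices, then cover their $R$-balls,'' asserting that expansion forces a constant fraction of the uncovered vertices to be covered per round, hence $O(R)$ rounds and a radius-$O(R^2)$ rule. That assertion is unjustified and, as far as I can see, false: the density of local minima in a round is about $\EE\bigl[\abs{B_G(o,R)\cap\mathrm{unc}}^{-1}\bigr]$ restricted to uncovered roots, which can be as small as $\Delta^{-R}$ times the uncovered density, so even though each selected vertex covers at least $(1+c)^{\floor{R/2}}$ vertices, the fraction covered per round is of order $\bigl((1+c)^{1/2}/\Delta\bigr)^{R}$, not a constant. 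So the round count, and hence the locality radius and the final density, are not controlled. (You also attribute this iterative scheme to \cref{sec:local}, but it is not there.) The paper sidesteps separation entirely: in \cref{lem:unimod-net-expander} one takes $W_0$ to be an i.i.d.\ Bernoulli($p$) subset and $W_1$ the set of vertices at distance more than $r$ from $W_0$; then $W=W_0\cup W_1$ is an $r$-net by fiat, is $r$-local in a single round, and expansion enters only through $\abs{B_G(o,r)}\ge(1+c)^r$, giving $\PP(o\in W)\le p+(1-p)^{(1+c)^r}\lesssim e^{-cr/4}\le\theta^{c/(40\log\wt\Delta)}$ for $p=(1+c)^{-r/2}$. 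Substituting this construction for yours (which also makes your packing bound unnecessary), the rest of your argument goes through; as you wrote it, the net-construction step does not.
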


Next, we give an improved bound when $G$ is an infinite regular expander and $x$ above is the spectral radius $\rho \coloneqq \norm{A_G}$ of the adjacency operator of $G$.
The \emph{non-amenability} condition in the next theorem is equivalent to $\rho<d$ (see \cite[Theorem 51]{CSGDHAmena}).

\begin{theorem}[Top of the spectrum for infinite regular expanders] \label{thm:RegularExp}
    Let $(G,o)$ be a non-amenable infinite connected $d$-regular unimodular random graph. Let $\rho \coloneqq \norm{A_G}$ be the spectral radius of the adjacency operator of $G$. Then for every $\theta>0$ and every $\varepsilon>0$ we have
    \[ \mu_G[\rho(1-\theta),\rho]\lesssim_{d, \rho, \varepsilon} \theta^{\frac{\log d}{\log \rho}-1-\varepsilon}.\]
\end{theorem}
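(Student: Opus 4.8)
The plan is to combine \cref{thm:unimod-expander} with a sharpening that uses $d$-regularity and the exact value of $\rho$. Since $\rho=\norm{A_G}$ is the top of the spectrum we have $\mu_G(\rho,\infty)=0$, so \cref{thm:unimod-expander} already applies at $x=\rho$ with the vertex-expansion constant $c_0=c_0(d,\rho)>0$ furnished by non-amenability, giving $\mu_G[\rho(1-\theta),\rho]\lesssim\theta^{c_0/(40\log d)}$; but this is too weak, as the exponent collapses as $\rho\uparrow d$. It is convenient to pass through the equivalent return-probability formulation: by the spectral theorem $\mu_G[\rho(1-\theta),\rho]\le(1-\theta)^{-2n}(d/\rho)^{2n}\,\EE[\langle\delta_o,(A_G/d)^{2n}\delta_o\rangle]$, so with $n=\ceil{1/\theta}$ it suffices to prove $p_{2n}\lesssim_{d,\rho,\varepsilon}(\rho/d)^{2n}n^{-(\log d/\log\rho-1-\varepsilon)}$; conversely this bound, inserted into a dyadic decomposition of $\int t^{2n}\,d\mu_G(t)$ over the intervals $[\rho(1-2^{-k}),\rho(1-2^{-k-1})]$, reproduces the stated spectral estimate (and \cref{thm:decay on return probs intro}) up to an arbitrarily small loss in the exponent.

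To obtain the right exponent I would re-run the net-plus-interlacing argument behind \cref{thm:unimod-main,thm:unimod-expander}, but replace the crude input ``a ball of radius $r$ has at most $\wt\Delta^{\,r}$ vertices'' by a spectral version. For $d$-regular $G$ one has $|B_G(w,r)|\le\tfrac{d}{d-2}(d-1)^r$, but the unimodular Cauchy interlacing theorem (\cref{sec:interlacing}) says each induced ball $B_G(w,r)$ sits inside an operator of norm $\rho$, and the spectral content of $B_G(w,r)$ relevant near $\rho$ is governed by $\rho^{r}$ rather than by the vertex count $\asymp d^{r}$ — morally because $\one_{[\rho(1-\theta),\rho]}(A_G)$ is well approximated by a polynomial in $A_G$ of degree $\asymp r$, so localizing to $B_G(w,r)$ is a statement about walks of length $\asymp r$, of which there are only $\asymp\rho^{2r}$ near the top. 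Choosing $r\asymp\log_\rho(1/\theta)$ (so $\rho^{-r}\asymp\theta$) and balancing the spectral content $\asymp\rho^{r}$ of each ball against the density $\asymp d^{-r}$ of a suitable $r$-separated net produced by the net-selection lemma (\cref{sec:local}), the count should yield $\mu_G[\rho(1-\theta),\rho]\lesssim(d/\rho)^{-r(1+o(1))}=\theta^{\log_\rho(d/\rho)(1+o(1))}=\theta^{\log d/\log\rho-1-\varepsilon}$, the $o(1)$ being absorbed into $\varepsilon$. Non-amenability enters both to guarantee the separated net has the claimed density and to ensure the interlacing error terms coming from the uncovered part of $G$ (controlled by ball growth) are dominated by $(d/\rho)^{-r}$.

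The main obstacle is making the ``$\rho^{r}$ instead of $d^{r}$'' step rigorous inside the interlacing/net bookkeeping: a naive trace estimate $\tr(A_{B_G(w,r)}^{2m})\le|B_G(w,r)|\,\rho^{2m}$ only bounds the number of eigenvalues of $B_G(w,r)$ exceeding a fixed fraction of $\rho$, not those within $\theta\rho$ of $\rho$, so the $\rho$-dependence must be carried by the choice of test vectors — essentially replacing ``vertices of $B_G(w,r)$ seen by an approximate eigenvector'' with ``walks of length $\asymp r$ seen'' — and one must check that the approximate eigenvectors delivered by the net-selection lemma genuinely live on such walk-supports. An alternative route, which would also explain the ``rapid decay'' heuristic of \cref{sec:non amenable}, is to expand $\EE[\langle\delta_o,A_G^{2n}\delta_o\rangle]=\sum_{\ell\ge0}\EE[\nu_\ell(o)]f_{2n}(\ell)$ by lifting closed walks to the $d$-regular universal cover $T_d$, where $f_{2n}(\ell)$ is the explicit number of length-$2n$ walks between points at distance $\ell$ and $\nu_\ell(o)$ counts non-backtracking closed walks of length $\ell$ at $o$; the Ihara relation $\rho_{\mathrm{NBT}}+(d-1)/\rho_{\mathrm{NBT}}=\rho$ makes the exponential rate of this sum exactly $\rho^{2n}$, with the polynomial factor coming from an $\ell$-decay estimate on $\EE[\nu_\ell(o)]$ (itself a non-concentration statement for the line graph of $G$, seedable by \cref{thm:unimod-expander}). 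The difficulty with this second route is that the bare bound $\EE[\nu_\ell(o)]\lesssim\rho_{\mathrm{NBT}}^{\,\ell}$ is lossless at the exponential level (it only gives the trivial $\EE[\langle\delta_o,A_G^{2n}\delta_o\rangle]\le\rho^{2n}$, since the exponential moment of the distance process on $T_d$ factorizes), so one needs genuine polynomial decay in $\ell$, which seems to require iterating through the tower of line graphs while keeping the exponent dimension-free — precisely the delicate point.
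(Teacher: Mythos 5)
There is a genuine gap: both routes you sketch are left incomplete by your own admission (the ``$\rho^{r}$ instead of $d^{r}$'' step and the polynomial decay of $\EE[\nu_\ell(o)]$ are exactly the points you flag as unresolved), and neither is how the argument actually closes. The key observation you are missing is that at the very top of the spectrum, $x=\rho=\norm{A_G}$, the moment-method/local--global step can be dispensed with entirely. The infinite version of \cref{lem:rad-drop} shows that deleting a random $r$-net from $G$ drops the \emph{global} operator norm to $(\rho^{2r}-1)^{1/(2r)}$; choosing $r\asymp\log(1/\theta)/(2\log\rho)$ makes this at most $(1-\theta)\rho$, so the spectral interlacing theorem (\cref{thm:interlacing}) alone gives $\mu_G[(1-\theta)\rho,\rho]\le$ (density of the net). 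No separated set, no trace estimate on balls, and no ``spectral content of $B_G(w,r)$'' enters. The improved exponent then comes not from any refinement of the walk-counting inside balls but from an elementary Cauchy--Schwarz fact: a $d$-regular unimodular random graph with $\norm{A_G}=\rho$ is a $c$-expander with $1+c=d^2/\rho^2$, so radius-$r$ balls have at least $(d/\rho)^{2r}$ vertices and \cref{lem:unimod-net-expander} (with $p=\theta^{(1-2\varepsilon)(\log d/\log\rho-1)}$) produces an $r$-net of density $\lesssim\theta^{(\log d/\log\rho-1)(1-O(\varepsilon))}$. Your numerology lands on the right exponent, but the mechanism you propose for achieving it is not one the paper's machinery supports as stated, and you give no rigorous substitute.

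A secondary issue: your proposed reduction to the return-probability bound $p_{2n}\lesssim(\rho/d)^{2n}n^{-\alpha}$ is logically backwards relative to the paper. The Markov-type inequality $\mu_G[(1-\theta)\rho,\rho]\le(1-\theta)^{-2n}(d/\rho)^{2n}p_{2n}$ is valid, but in the paper \cref{thm:decay on return probs intro} is \emph{deduced from} \cref{thm:RegularExp} via \cref{prop:return-decay}, not proved independently; so routing the proof through $p_{2n}$ leaves you with a statement at least as hard as the one you started with, and for general unimodular random graphs (as opposed to specific Cayley graphs) you offer no independent way to establish it.
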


\section{Finite graphs} \label{sec:finite}

We prove \cref{thm:finite-main,thm:finite-expander} by adapting \cite{JTYZZ21}.
These theorems follow from the next non-asymptotic result applied with suitable parameters.

\begin{proposition}
	 \label{prop:finite-param}
	Let $\Delta \ge 2$.
	Let $G$ be a connected $n$-vertex edge-weighted graph with maximum degree at most $\Delta$ and all edge weights in the interval $[w_{\min}, w_{\max}]$ with $0 < w_{\min} \le  w_{\max}$.
	Let $r$ and $s$ be positive integers.
	Suppose $G$ has an $r$-net with $\epsilon_\net n$ vertices.
	Let $x \ge w_{\min}$ and $\delta = \mu_G(x,\infty)$.
	Let $\theta \in [0,1)$.
	Then
	\[
	\mu_G[(1 - \theta)x, x]
	\le
	(1-\theta)^{-2s} \paren{1 - \paren{\frac{w_{\min}}{x}}^{2r}}^{s/r}
	 + 2\delta \Delta^{2(s+2)}
	 + 2 \epsilon_\net .
	\]
\end{proposition}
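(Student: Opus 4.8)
The plan is to produce many test functions supported on disjoint balls of radius roughly $s$ and to play off the Cauchy interlacing / min-max characterization against the spectral decay coming from the $r$-net structure. Concretely, I would start from an $r$-net $W$ with $\abs{W} = \epsilon_\net n$ and, inside it, extract a large $s$-separated subset $W' \subseteq W$ of size at least $\abs{W}/\Delta^{2s+1}$ (since each net vertex can kill at most $\Delta^{2s}$ others within distance $2s$; here is where the degree bound enters). For each $w \in W'$ I want a unit vector $f_w$ supported on $B_G(w, s)$ that is a near-eigenvector at the top, i.e. with $\ang{A_G f_w, f_w}$ close to $x$ and the $f_w$ pairwise orthogonal (which is automatic from disjoint supports). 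The cleanest way to get such $f_w$: let $P_{\ge x}$ be the spectral projection onto eigenvalues in $[x,\infty)$; truncate a vector in its range to the ball $B_G(w,s)$. A Combes--Thomas / power-iteration estimate shows that truncating an eigenvector of eigenvalue $\ge (1-\theta)x$ to a ball of radius $s$ changes it only by a factor like $(1-\theta)^{-s}$ in the relevant Rayleigh quotient — this is the source of the $(1-\theta)^{-2s}$ term and the $(1 - (w_{\min}/x)^{2r})^{s/r}$ term (the latter captures the "escape" of the walk beyond radius $r$ using that each $r$-step segment loses at least a $(w_{\min}/x)^{2r}$ fraction of its norm-weight, iterated $s/r$ times).

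The next step is the interlacing argument. Build the subspace $V = \mathrm{span}\{ P_{[(1-\theta)x,\,\infty)} \, f_w : w \in W'\}$, or rather work with the bilinear form of $A_G$ restricted to $\bigoplus_w \mathrm{span}(f_w)$, and compare $\dim V$ against $m_G(x,\infty) = \delta n$ and $m_G((1-\theta)x, x] = \mu_G[(1-\theta)x,x]\, n$. The point is: if $m_G[(1-\theta)x,\infty)$ were large, then (after discarding the at most $\delta n \cdot \Delta^{2(s+2)}$ balls that see too much of the top-$\delta$ part of the spectrum — the combinatorial bookkeeping giving the $2\delta \Delta^{2(s+2)}$ slack) a positive fraction of the $f_w$ would have Rayleigh quotient genuinely exceeding $x$ on a subspace of dimension exceeding $m_G(x,\infty)$, contradicting min-max. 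Running the contrapositive yields an upper bound on $\mu_G[(1-\theta)x,x]$ of the shape $(1-\theta)^{-2s}(1 - (w_{\min}/x)^{2r})^{s/r} + 2\delta\Delta^{2(s+2)} + 2\epsilon_\net$, where the $2\epsilon_\net$ accounts for the vertices of $G$ not captured well by the sparsified net and the factor-of-$2$'s absorb the $s$-separation loss.

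The main obstacle I anticipate is making the truncation estimate quantitatively clean enough to land exactly the stated bound: one needs that restricting a top-of-spectrum near-eigenvector to $B_G(w,s)$, and then measuring $\ang{A_G \cdot, \cdot}$, loses at most the multiplicative factor $(1-\theta)^{-2s}$ while simultaneously the "mass at radius $\ge r$" contributes the $(1-(w_{\min}/x)^{2r})^{s/r}$ factor — these two effects must be tracked together rather than separately, since a crude triangle-inequality split would produce cross terms. I expect the right tool is to expand $\ang{(A_G/x)^{2k} \delta_w, \delta_w}$ as a sum over closed walks of length $2k$ from $w$, bound the contribution of walks leaving $B_G(w,r)$ using $w_{\min}/x < 1$ geometrically, and then sum over $k$ up to $\sim s$; the edge-weight ratios enter only through $\wt\Delta = \Delta w_{\max}/w_{\min}$ in the final reassembly. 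Everything else — the net sparsification, the orthogonality, the interlacing — is routine once this estimate is in hand.
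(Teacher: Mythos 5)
There is a genuine gap, and it sits at the center of your argument: you propose to find, for each vertex $w$ of a separated subset of the net, a unit vector $f_w$ supported on $B_G(w,s)$ with Rayleigh quotient close to $x$, obtained by truncating vectors in the range of the spectral projection $P_{[(1-\theta)x,\infty)}$. No such localization is available. Eigenvectors (and spectral subspaces) at the top of the spectrum of a bounded-degree graph can be completely delocalized, and a Combes--Thomas bound gives decay of the resolvent at energies \emph{off} the spectrum, not concentration of eigenfunctions \emph{at} spectral points; truncating a delocalized vector to a ball of radius $s$ can destroy its Rayleigh quotient entirely, with no $(1-\theta)^{-2s}$-type control. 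Moreover, the logical direction is inverted: producing many orthogonal test vectors with Rayleigh quotient near $x$ yields a \emph{lower} bound on the number of large eigenvalues (this is exactly how the paper bounds the auxiliary set $U_0$, giving the $2\delta\Delta^{2(s+2)}$ term), whereas the proposition requires an \emph{upper} bound on $\mu_G[(1-\theta)x,x]$, and your ``contrapositive'' does not supply one because large eigenvalue multiplicity in $[(1-\theta)x,x]$ does not force the existence of ball-supported near-eigenvectors.

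The paper's proof avoids localization altogether. It deletes from $G$ the $r$-net $W$ together with the set $U=\{v:\lambda_1(B_G(v,s+1))>x\}$ (whose density is at most $\delta\Delta^{2(s+2)}$ by the packing argument you gestured at). In the resulting graph $H$, the net-removal lemma (\cref{lem:rad-drop}) forces \emph{every} radius-$s$ ball to satisfy $\lambda_1(B_H(v,s))^{2r}\le x^{2r}-w_{\min}^{2r}$, and then the trace identity $\sum_i\lambda_i(H)^{2s}\le\sum_v\lambda_1(B_H(v,s))^{2s}$ (\cref{lem:local-global}) bounds $m_H[(1-\theta)x,x]$ by the first term of the claimed inequality --- this is a statement about the sum over all eigenvalues, so no individual eigenvector is ever localized. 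Cauchy interlacing then transfers the count from $H$ back to $G$ at a cost of $2(|U|+|W|)$, which is where the $2\delta\Delta^{2(s+2)}+2\epsilon_\net$ actually comes from (not from ``vertices not captured by the sparsified net''). Your closed-walk expansion of $\langle\delta_w,(A_G/x)^{2k}\delta_w\rangle$ is aimed at the right mechanism, but it must be applied to the graph \emph{after} the net is removed; in $G$ itself there is no decay to exploit.
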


A key idea from \cite{JTYZZ21} is to remove a net from the graph and observe that net removal significantly reduces spectral radius
We then apply the moment method to bound eigenvalues multiplicity.

\begin{lemma}[Net removal lowers spectral radius {\cite[Lemma 4.3]{JTYZZ21}}] 
	\label{lem:rad-drop}
	Let $G$ be a non-empty edge-weighted graph with edge weights in $[w_{\min}, w_{\max}]$  with $0 < w_{\min} \le  w_{\max}$. 	Let $S \subseteq V(G)$ be an $r$-net of $G$.
	Let $H = G - S$.
	Then
	\[
	\lambda_1(H)^{2r} \le \lambda_1(G)^{2r} - w_{\min}^{2r}.
	\]
\end{lemma}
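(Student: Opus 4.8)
The plan is to combine the spectral theorem (via the moment/Rayleigh-quotient characterization of $\lambda_1$) with a walk-based argument. First I would recall that since $A_G$ is a nonnegative symmetric matrix, $\lambda_1(G)^{2r}$ equals the largest eigenvalue of $A_G^{2r}$, and by Perron--Frobenius there is a nonnegative unit eigenvector $\phi$ of $A_H$ with $A_H^{2r}\phi = \lambda_1(H)^{2r}\phi$; extend $\phi$ by zero to all of $V(G)$. Then $\lambda_1(H)^{2r} = \langle \phi, A_H^{2r}\phi\rangle$, and the key observation is that $A_H^{2r}$ counts (weighted) closed walks of length $2r$ in $H$, i.e. closed walks in $G$ of length $2r$ that avoid $S$ entirely, whereas $\langle \phi, A_G^{2r}\phi\rangle$ counts all closed walks of length $2r$ in $G$ with the same weight $\phi$.

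The heart of the argument is then to show
\[
\langle \phi, A_G^{2r}\phi\rangle \;\ge\; \langle \phi, A_H^{2r}\phi\rangle \;+\; w_{\min}^{2r}\,\langle \phi,\phi\rangle \;=\; \lambda_1(H)^{2r} + w_{\min}^{2r}.
\]
Since $\langle \phi, A_G^{2r}\phi\rangle \le \lambda_1(G)^{2r}\|\phi\|^2 = \lambda_1(G)^{2r}$ by the variational principle, this gives the claim. To prove the displayed inequality I would argue that the contribution to $\langle\phi,A_G^{2r}\phi\rangle$ from walks avoiding $S$ is exactly $\langle\phi,A_H^{2r}\phi\rangle$ (all nonnegative terms), and then exhibit an injection from the support of $\phi$ into a disjoint family of additional closed walks in $G$ of length $2r$ that do touch $S$, each carrying weight at least $w_{\min}^{2r}$ times $\phi_v^2$ for the relevant vertex $v$. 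Concretely: for each vertex $v$ in the support of $\phi$, since $S$ is an $r$-net there is a vertex $u \in S$ at distance at most $r$ from $v$; take a geodesic path from $v$ to $u$ of length $\ell \le r$, and form the closed walk that goes $v \to u$ along this path and back, padded out to length exactly $2r$ by repeating the final edge (or an incident edge at $v$) — one must be slightly careful with parity and padding, but a length-$2\ell$ back-and-forth walk can always be extended to length $2r$ by appending $r-\ell$ repetitions of a single back-and-forth step, keeping it closed at $v$ and making it pass through $u\in S$. These walks are absent from $A_H^{2r}$ since they hit $S$, they are distinct for distinct $v$ (the walk determines $v$ as its basepoint, and the "new" walks can be organized so no double-counting occurs — e.g. index them by their basepoint), and each contributes weight $\ge w_{\min}^{2r}\phi_v^2$ to $\langle\phi,A_G^{2r}\phi\rangle$ because it traverses $2r$ edges each of weight $\ge w_{\min}$. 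Summing over $v$ in the support of $\phi$ yields the extra $w_{\min}^{2r}\sum_v \phi_v^2 = w_{\min}^{2r}$.

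The main obstacle I anticipate is making the injection/disjointness rigorous without overcounting: a naive choice of padding may produce walks that coincide with walks already counted among those avoiding $S$ in a different decomposition, or two basepoints $v,v'$ might give the same walk. The clean fix is to not try to add walks "on top of" the avoiding-$S$ walks, but rather to split the closed walks of length $2r$ based at $v$ into those avoiding $S$ and those meeting $S$, note the first class contributes exactly $\langle\phi,A_H^{2r}\phi\rangle$, and then lower-bound the second class: it suffices that for each $v\in\operatorname{supp}\phi$ there is at least one closed walk of length $2r$ based at $v$ meeting $S$, and each such walk contributes $\ge w_{\min}^{2r}\phi_v^2$; since we only need existence of one per basepoint and the diagonal entries $(A_G^{2r})_{vv}$ are sums of nonnegative walk-weights, no injection between basepoints is needed at all. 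This reduces the problem to the purely combinatorial fact that from any $v$ there is a closed walk of length exactly $2r$ reaching $S$, which follows from the $r$-net property plus the padding trick, with the parity issue handled since $2r$ is even and a back-and-forth excursion of length $2\ell \le 2r$ extends to $2r$ by repeating one edge $r-\ell$ times.
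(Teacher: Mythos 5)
Your proposal is correct and follows essentially the same route as the paper: compare $A_G^{2r}$ and $A_H^{2r}$ entrywise, use nonnegativity of walk weights for the off-diagonal terms, and gain $w_{\min}^{2r}$ on each diagonal entry by exhibiting one closed walk of length $2r$ from $v$ that visits the net (geodesic to the nearest net vertex and back, padded by repeating an incident edge), then conclude via the Perron vector of $A_H$. Your reformulation in terms of the quadratic form $\angs{\phi, A_G^{2r}\phi} - \angs{\phi, A_H^{2r}\phi}$, and your observation that no injection between basepoints is needed since the bound is per diagonal entry, match the paper's argument exactly.
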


\begin{proof}
	We can assume that $G$ has no isolated vertices (removing them does not affect the claim). 
	We can view $A_H$ as obtained from $A_G$ by zeroing out the rows and columns corresponding to $S$ so that they have the same dimensions. We claim that
	\[
	A_H^{2r} \le A_G^{2r} - w_{\min}^{2r} I \qquad \text{entrywise},
	\]
	where $I$ is the identity matrix.
	To prove the entrywise matrix inequality, note that since $A_H \le A_G$ entrywise with all nonnegative entries, one has $A_H^{2r} \le A_G^{2r}$ entrywise. So it suffices to check $A_H^{2r} \le A_G^{2r} - w_{\min}^{2r} I$ on the diagonal entries. 
	For each $v \in V(G)$, the entry $(v,v)$ of $A_G^{2r}$ corresponds to summing weighted length $2r$ closed walks in $G$ starting and ending at $v$ (here the weight of a walk is defined as the product of its edge-weights), and likewise with $A_H^{2r}$ and $H$. 
	For each $v \in V(G)$, there is a length $2r$ closed walk starting at $v$ that is present in $G$ but not available in $H$, namely the walk from $v$ to its closest vertex in $S$ (which must be at distance $\le r$ due to $S$ being an $r$-net) and then back to $v$, and then further walking back and forth along some edge of $G$ to complete a length $2r$ closed walk (here we are using that $G$ has no isolated vertices).
	It follows that for each $v \in V(G)$, the $(v,v)$-entry of $A_G^{2r} - A_H^{2r}$ is at least $w_{\min}^{2r}$.
\end{proof}

\begin{lemma}[Local-global spectral comparison {\cite[Lemma 4.4]{JTYZZ21}}] 
	\label{lem:local-global}
	Let $G$ be an edge-weighted graph with nonnegative edge-weights. 
	Let $r$ be a positive integer. Then
	\[
	\sum_{i = 1}^{\abs{G}} \lambda_i(G)^{2r} \le \sum_{v \in V(G)} \lambda_1(B_G(v, r))^{2r}.
	\]
\end{lemma}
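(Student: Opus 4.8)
The plan is to read both sides as generating functions for weighted closed walks of length $2r$ and to exploit the fact that such walks cannot escape a ball of radius $r$. First I would rewrite the left-hand side as a trace: since $A_G$ is a real symmetric matrix, $\sum_{i=1}^{\abs{G}} \lambda_i(G)^{2r} = \tr(A_G^{2r}) = \sum_{v \in V(G)} (A_G^{2r})_{vv}$, and the diagonal entry $(A_G^{2r})_{vv}$ is exactly the sum of the weights of all length-$2r$ closed walks in $G$ based at $v$, where the weight of a walk is the product of the edge-weights it traverses (counted with multiplicity).

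The key step is a localization observation: every length-$2r$ closed walk based at $v$ stays inside $B_G(v,r)$. Indeed, if the walk is at a vertex $u$ after $t$ steps, then $d_G(v,u) \le t$; but the remaining $2r-t$ steps must bring it back to $v$, so also $d_G(v,u) \le 2r-t$, whence $d_G(v,u) \le \min(t, 2r-t) \le r$. Thus every vertex and every edge used by the walk belongs to the induced subgraph $B_G(v,r)$, and conversely every length-$2r$ closed walk based at $v$ in $B_G(v,r)$ is such a walk in $G$ with the same weight. Hence $(A_G^{2r})_{vv} = (A_{B_G(v,r)}^{2r})_{vv}$ for every $v \in V(G)$.

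Finally I would bound each local diagonal entry by the top eigenvalue of the corresponding ball. Fix $v$, write $B = B_G(v,r)$, and let $e_v \in \RR^{V(B)}$ be the indicator vector of $v$. Since $A_B$ is symmetric, $(A_B^{2r})_{vv} = \ang{A_B^{2r} e_v, e_v} = \norm{A_B^r e_v}^2 \le \norm{A_B^r}^2 = \norm{A_B}^{2r}$. Because $A_B$ has nonnegative entries, the Perron--Frobenius theorem identifies its operator norm (its spectral radius) with its largest eigenvalue, so $\norm{A_B} = \lambda_1(B)$ and therefore $(A_B^{2r})_{vv} \le \lambda_1(B)^{2r}$. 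Summing over $v \in V(G)$ gives
\[
\sum_{i=1}^{\abs{G}} \lambda_i(G)^{2r} = \sum_{v \in V(G)} (A_{B_G(v,r)}^{2r})_{vv} \le \sum_{v \in V(G)} \lambda_1(B_G(v,r))^{2r},
\]
which is the claim.

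The argument is essentially routine. The only points that need a little care are the localization claim (which makes the local and global closed-walk counts literally equal, not merely comparable) and the appeal to Perron--Frobenius to write $\norm{A_B} = \lambda_1(B)$ rather than just $\norm{A_B} = \max_i \abs{\lambda_i(B)}$ — and this is precisely where the hypothesis that edge-weights are nonnegative is used. I do not anticipate any genuine obstacle.
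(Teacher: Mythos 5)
Your proof is correct and follows the same route as the paper's: rewrite the left-hand side as $\tr(A_G^{2r})$, observe that closed walks of length $2r$ based at $v$ stay inside $B_G(v,r)$ so that $\angs{1_v, A_G^{2r}1_v} = \angs{1_v, A_{B_G(v,r)}^{2r}1_v}$, bound this by $\lambda_1(B_G(v,r))^{2r}$ via Perron--Frobenius, and sum over $v$. Your write-up merely fills in the localization and norm estimates in more detail than the paper does.
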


\begin{proof}
	The left-hand side above is the trace of $A_G^{2r}$, which comes from summing the weights of closed length $2r$ walks in $G$. Every length $2r$ walk starting and ending at $v$ must stay within distance $r$ of $v$, and so their contributions sum to $\angs{1_v, A_G^{2r} 1_v} = \angs{1_v, A_{B_G(v, r)}^{2r} 1_v} \le \lambda_1(B_G(v, r))^{2r}$, with the final step using the Perron--Frobenius theorem. Summing over $v$ yields the result.
\end{proof}

\begin{proof}[Proof of \cref{prop:finite-param}]
Let
\[
U = \set{ v \in V(G) : \lambda_1 (B_G(v, s+1)) > x}.
\]
Let $U_0$ be a maximal $2(s+2)$-separated subset of $U$.
Then every vertex of $U$ lies within distance less than $2(s+2)$ of $U_0$ (else we can add a new vertex to $U_0$).
Since $\abs{B_G(v, R-1)} \le \Delta^R$ for every $R$,
$\abs{U} \le \abs{U_0} \Delta^{2(s+2)}$.

Let $U_1$ be the set of all vertices in $G$ with distance at most $s+1$ to $U_0$.
Then $G[U_1]$ consists of $\abs{U_0}$ connected components each with spectral radius greater than $x$, and hence $G$ has at least $\abs{U_0}$ eigenvalues larger than $x$.
Thus $\abs{U_0} \le m_G(x,\infty) = \delta n$, and so
\[
\abs{U} \le \abs{U_0} \Delta^{2(s+2)} \le \delta \Delta^{2(s+2)}n.
\] 

Let $W$ be an $\epsilon_\net n$-vertex $r$-net of $G$.
Let 
\[
H = G - U - W
\]
 (this means removing from $G$ the vertices in $U \cup W$ and their incident edges).
 
For all $v \in V(H)$ and vertex $u$ in $B_H(v,s)$, there is some $w \in W$ with $d_G(u,w) \le r$ due to $W$ being an $r$-net in $G$. If $d_G(w,v) > s$, then the path from $u$ to $w$ in $G$ contains some $u'$ with $d_G(v,u') = s+1$.
So the vertex complement of $B_H(v,s)$ in $B_G(v,s+1)$ is an $r$-net of $B_G(v,s+1)$.
Thus by \cref{lem:rad-drop}, for all $v \in H$,
\[
\lambda_1(B_H(v, s))^{2r}
\le
\lambda_1(B_G(v, s+1))^{2r} - w_{\min}^{2r}
\le x^{2r} - w_{\min}^{2r}.
\]
Applying \cref{lem:local-global},
\[
m_H[(1 - \theta)x, x] (1-\theta)^{2s} x^{2s} 
\le
\sum_{i=1}^{\abs{H}} \lambda_i(H)^{2s}
\le
\sum_{v \in V(H)} \lambda_1( B_H(v, s))^{2s}
\le 
(x^{2r} - w_{\min}^{2r})^{s/r} n.
\]
Thus
\[
m_H[(1 - \theta)x, x] 
\le 
(1-\theta)^{-2s} \paren{1 - \paren{\frac{w_{\min}}{x}}^{2r}}^{s/r}n.
\]
By the Cauchy eigenvalue interlacing theorem,
\begin{align*}
\frac{m_G[(1 - \theta)x, x]}{n}
&\le 
\frac{m_H[(1 - \theta)x, x]}{n} + \frac{2(\abs{U} + \abs{W})}{n}
\\
&
\le
(1-\theta)^{-2s} \paren{1 - \paren{\frac{w_{\min}}{x}}^{2r}}^{s/r}
 + 2\delta \Delta^{2(s+2)}
 + 2 \epsilon_\net. \qedhere 
\end{align*}
\end{proof}

Let us now choose appropriate values of $r$ and $s$.

\begin{lemma} \label{lem:finite-r-s}
	Let $G$ be a connected $n$-vertex edge-weighted graph with maximum degree at most $\Delta \ge 2$
	 and all edge-weights in the interval $[w_{\min}, w_{\max}]$ with $0 < w_{\min} \le  w_{\max}$.
	Define $\wt \Delta = \Delta w_{\max}/w_{\min}$.
	Let $\delta = \mu_G(x, \infty)$ for some $x > 0$.
	Let $0 < \theta \le 1/\wt\Delta$ and suppose 
	\[
	    \delta \le \wt\Delta^{-10/\theta}.
	\]
	Let
	\[
		s = \floor{\frac{1}{\theta}}
		\quad\text{and}\quad
		r = \floor{\frac{1}{10} \log_{\wt\Delta} s}.
	\]
	Suppose $G$ has an  $r$-net with $\epsilon_\net n$ vertices. Then
	\[
	\mu_G[(1-\theta) x, x] \lesssim \epsilon_\net,
	\]
	where $\lesssim$ hides an absolute constant multiplicative factor.
\end{lemma}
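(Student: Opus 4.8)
The plan is to feed \cref{prop:finite-param} the stated $r,s$ together with the hypothesized $r$-net, and then verify that the first two terms in the resulting bound are each at most $\epsilon_\net$, so that the whole expression is $\le 4\epsilon_\net$. The point of the choice $s=\floor{1/\theta}$, $r=\floor{\tfrac1{10}\log_{\wt\Delta}s}$ is twofold: taking $r$ only logarithmic in $s$ keeps $\wt\Delta^{2r}\le s^{1/5}$, so the ``radius-drop'' factor obeys (via $1-t\le e^{-t}$)
\[
\paren{1-\paren{\tfrac{w_{\min}}{x}}^{2r}}^{s/r}\le e^{-\wt\Delta^{-2r}s/r}\le e^{-5s^{4/5}/\ln s},
\]
which is super-polynomially small in $s\asymp 1/\theta$; and the hypothesis $\delta\le\wt\Delta^{-10/\theta}$ comfortably beats $\Delta^{2(s+2)}\le\wt\Delta^{2/\theta+4}$, giving $2\delta\Delta^{2(s+2)}\le 2\wt\Delta^{-6/\theta}$.

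Before applying the proposition we make routine reductions. If $G$ has no edges there is nothing to prove since $[(1-\theta)x,x]\subseteq(0,\infty)$ misses the spectrum $\{0\}$; so assume $G$ has an edge, whence $w_{\min}\le\lambda_1(G)\le\Delta w_{\max}$. If $x>\lambda_1(G)$ then every eigenvalue in $[(1-\theta)x,x]$ lies in $[(1-\theta)\lambda_1(G),\lambda_1(G)]$, so it suffices to treat $x=\lambda_1(G)$ (the hypothesis on $\delta$ survives, since $\mu_G(\lambda_1(G),\infty)=0$); thus we may assume $x\le\Delta w_{\max}$ --- and we still owe the lower bound $x\ge w_{\min}$. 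If $x<w_{\min}$, then each $B_G(v,s+1)$ contains an edge at $v$, so $\lambda_1(B_G(v,s+1))\ge w_{\min}>x$ for every $v$, i.e.\ the set $U$ in the proof of \cref{prop:finite-param} equals $V(G)$; that proof's argument bounds $\abs{U}\le\delta\Delta^{2(s+2)}n$, forcing $\delta\Delta^{2(s+2)}\ge1$, which contradicts $\delta\le\wt\Delta^{-10/\theta}$ together with $2(s+2)<10/\theta$ (valid as $\theta\le\tfrac1{\wt\Delta}\le\tfrac12$). So $x\ge w_{\min}$, and in particular $w_{\min}/x\ge 1/\wt\Delta$.

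Now apply \cref{prop:finite-param}. First dispatch the easy case $r=0$: the only $0$-net is $V(G)$, so $\epsilon_\net=1$ and the claim is trivial; hence assume $r\ge1$, equivalently $s\ge\wt\Delta^{10}$ (so $s\ge 2^{10}$). The ingredient still needed is a matching \emph{lower} bound on $\epsilon_\net$: a ball of radius $r$ has at most $\wt\Delta^{r+1}$ vertices, so any $r$-net of $G$ has relative size at least $\wt\Delta^{-(r+1)}$; and $r\ge1$ gives $\wt\Delta\le s^{1/10}$ and $\wt\Delta^{r}\le s^{1/10}$, hence $\epsilon_\net\ge\wt\Delta^{-(r+1)}\ge s^{-1/5}$. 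Combining this with $(1-\theta)^{-2s}\le e^{4}$ and the displayed estimates, one checks (using $s\ge 2^{10}$) that both $(1-\theta)^{-2s}(1-(w_{\min}/x)^{2r})^{s/r}\le e^{4}e^{-5s^{4/5}/\ln s}\le s^{-1/5}$ and $2\delta\Delta^{2(s+2)}\le 2\wt\Delta^{-6s}\le s^{-1/5}$ are at most $\epsilon_\net$; so \cref{prop:finite-param} yields $\mu_G[(1-\theta)x,x]\le\epsilon_\net+\epsilon_\net+2\epsilon_\net=4\epsilon_\net$.

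The one genuinely non-mechanical point is this last comparison: the two error terms have to be controlled not in absolute terms but \emph{relative to} $\epsilon_\net$, so the elementary lower bound $\epsilon_\net\gtrsim\wt\Delta^{-(r+1)}$ on net sizes is indispensable --- and it is precisely the logarithmic choice of $r$ that makes $\wt\Delta^{-(r+1)}$ only polynomially (not exponentially) small in $s$, so that the super-polynomially small first term and the geometrically small second term can both be absorbed into it. Everything else is bookkeeping with the ranges of $\theta,r,s$, the only delicate spots being the degenerate cases $r=0$ and $x<w_{\min}$ handled above.
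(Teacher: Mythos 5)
Your proposal is correct and follows essentially the same route as the paper: apply \cref{prop:finite-param} with the given $r,s$, lower-bound $\epsilon_\net$ by $\wt\Delta^{-O(r)}\ge s^{-\Theta(1)}$ using the volume of radius-$r$ balls, and check that the radius-drop term and the $\delta\Delta^{2(s+2)}$ term are both absorbed into $\epsilon_\net$. Your explicit handling of the degenerate cases ($r=0$, edgeless $G$, and especially the reduction establishing $x\ge w_{\min}$, which \cref{prop:finite-param} requires but the lemma statement does not guarantee) is extra care the paper's proof elides, and it is welcome.
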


\begin{proof}
We have an $r$-net with $\epsilon_\net n$ vertices.
The set of radius $r$ balls centered at the vertices of an $r$-net covers all $n$ vertices of a graph, and so $n \le 2\Delta^r \epsilon_\net n$, which gives $\epsilon_\net \gtrsim \Delta^{-r} \ge \wt\Delta^{-r} \ge s^{-1/10}$. 

We can assume that $x \le \Delta w_{\max}$ since all eigenvalues of $G$ lie in $[-\Delta w_{\max}, \Delta w_{\max}]$.
Let us bound each term in the conclusion of \cref{prop:finite-param}. 
For the first term,
\begin{align*}
(1-\theta)^{-2s} \paren{1 - \paren{\frac{w_{\min}}{x}}^{2r}}^{s/r} 
&\le \exp\paren{  4\theta s -  \frac{s}{r} \paren{\frac{w_{\min}}{x}}^{2r} }
\le \exp\paren{  4\theta s -  \frac{s}{r} \wt\Delta^{-2r} }
\\
&\le  \exp\paren{  4 - \frac{s}{r} s^{-1/5}}
\lesssim \frac{1}{s} \lesssim \epsilon_\net.
\end{align*}
For the second term of \cref{prop:finite-param}, we have
$\delta \le \wt\Delta^{-10/\theta} \le \wt \Delta^{-10s}$, and so
\[
\delta \Delta^{2(s+2)} 
\le \delta \Delta^{6s} 
\le \wt\Delta^{-4s}
\lesssim \frac{1}{s}
\lesssim \epsilon_\net .
\]
So \cref{prop:finite-param} gives  $\mu_G[(1-\theta)x, x] \lesssim \epsilon_\net$.
\end{proof}

Next we find small nets.

\begin{lemma}[Selecting a net {\cite[Lemma 4.2]{JTYZZ21}}]
\label{lem:net}
	Let $G$ be a connected $n$-vertex graph. Let $r$ be a nonnegative integer. Then $G$ has an $r$-net of size $\ceil{n/(r+1)}$. \qed 
\end{lemma}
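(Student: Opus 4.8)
The plan is to prove the slightly more general claim by induction on $n$, or equivalently to exhibit the net by a direct BFS-layering argument. First I would pick an arbitrary root $v_0 \in V(G)$ and, since $G$ is connected, partition $V(G)$ into the breadth-first layers $L_j = \{ u : d_G(v_0, u) = j \}$ for $j = 0, 1, \dots, D$, where $D$ is the eccentricity of $v_0$. Group the layers into consecutive blocks of $r+1$ layers: block $B_k = L_{k(r+1)} \cup L_{k(r+1)+1} \cup \dots \cup L_{k(r+1)+r}$ for $k = 0, 1, \dots$. Within each block $B_k$, every vertex lies in some layer $L_j$ with $k(r+1) \le j \le k(r+1)+r$, and the unique "anchor" layer $L_{k(r+1)}$ of that block is at BFS-distance at most $r$ from $j$; following the BFS tree edges from $u \in B_k$ back toward $v_0$ reaches a vertex of $L_{k(r+1)}$ in at most $r$ steps, so every vertex of $B_k$ is within distance $r$ in $G$ of the anchor layer $L_{k(r+1)}$.

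So the naive choice "take all of every anchor layer" already gives an $r$-net, but it is too large. To get the size bound $\lceil n/(r+1) \rceil$, I would instead argue as follows: among the $r+1$ possible choices of which residue class $j \bmod (r+1)$ serves as the set of anchor layers, pick the residue class $a \in \{0, 1, \dots, r\}$ for which $\bigcup_{j \equiv a} L_j$ has the fewest vertices. Since the $r+1$ residue classes partition $V(G)$, this minimum-size class $W := \bigcup_{j \equiv a \pmod{r+1}} L_j$ has $|W| \le \lfloor n/(r+1) \rfloor \le \lceil n/(r+1)\rceil$. It remains to check $W$ is an $r$-net: any vertex $u \in L_j$ has some $j' \equiv a \pmod{r+1}$ with $|j - j'| \le r$, and moreover one can choose $j'$ with $j' \le j$ when $j \ge a$ (walk up the BFS tree) or $j' \ge j$ handled by going down — but "down" is not controlled, so I must take $j'$ to be the largest element of the residue class that is $\le j$; this exists and satisfies $j - j' \le r$ precisely because consecutive members of the residue class are $r+1$ apart, unless $j < a$, i.e. $j \in \{0, \dots, a-1\}$, in which case I instead route $u$ to a vertex of $L_a$ via the BFS tree path through $v_0$...

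Actually the clean fix avoids this edge case: replace the root by choosing $a$ first and then noting every layer index $j$ has a representative $j'$ of the residue class with $0 \le j' $ and $|j-j'|\le r$, and if $j' > j$ we route $u$ along a shortest path in $G$ (not the BFS tree) to any vertex at distance $j'-j \le r$ — but such a vertex need not be in $W$. The truly clean argument is: for $j \ge r$ there is always $j' \le j$ in the class with $j - j' \le r$; for $j < r$, the only problematic residues are small, and here I would simply prepend: redefine blocks so block $0$ is shorter, i.e. choose the phase so that $L_0$ is itself an anchor layer only if convenient — concretely, by the averaging choice of $a$, vertices in layers $L_0, \dots, L_{a-1}$ are within distance $a - 0 \le r$ of... no. I expect \textbf{this boundary bookkeeping to be the only real obstacle}, and the honest resolution is the greedy/inductive one: order vertices by BFS distance from $v_0$, repeatedly pick the unpicked vertex $u$ of smallest BFS-distance, add it to $W$, and delete all vertices within distance $r$ of $u$ (a set that includes $u$ itself together with the next $r$ BFS layers' worth of its descendants); each deletion removes at least the layers $d(v_0,u), \dots, d(v_0,u)+r$ intersected with the component, and a counting argument over how BFS distance strictly increases by at least $r+1$ between successive picks yields at most $\lceil (D+1)/(r+1)\rceil \le \lceil n/(r+1)\rceil$ picks. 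I would write up this greedy version, since the layer-residue averaging version requires the same endpoint care and offers no real savings.
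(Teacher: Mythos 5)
Your proposal does not close the gap you yourself flag, and the fallback greedy you settle on runs in the wrong direction. The residue-class averaging argument indeed breaks at the root: if the minimizing residue is $a>0$, a vertex in a layer $L_j$ with $j<a$ has no anchor layer above it and need not be within distance $r$ of the chosen class (take $r=2$, a root $v_0$ with a pendant leaf $u\in L_1$ and a separate path $v_0x_1x_2x_3$; the minimizing class is $\{x_2\}$, yet $d(u,x_2)=3$). More seriously, the greedy you end on --- repeatedly pick the undeleted vertex of \emph{smallest} BFS distance and delete its radius-$r$ ball --- satisfies neither of the two claims your count rests on. Take $r=1$ and let $G$ be the edge $v_0v_1$ together with $m$ pendant leaves $u_1,\dots,u_m$ attached to $v_1$, rooted at $v_0$. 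The first pick deletes $\{v_0,v_1\}$; after that every $u_i$ is isolated in the remaining graph and sits in layer $2$, so each of the next $m$ picks deletes exactly one new vertex and the BFS distance of successive picks never increases. The procedure outputs a net of size $m+1$, while the lemma promises $\ceil{(m+2)/2}$. So it is false both that each deletion removes at least $r+1$ vertices and that the BFS distance jumps by $r+1$ between picks; the top-down greedy produces a valid $r$-net but not one of the required size.

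The repair is to run the greedy from the bottom of the tree upward, which is what the paper does: pass to a spanning tree, take a vertex $v$ of maximum depth, and if $v$ is within distance $r$ of the root declare the root a one-element net; otherwise let $u$ be the ancestor of $v$ at distance exactly $r$, put $u$ in the net, delete the entire subtree rooted at $u$, and recurse on what remains. The path from $u$ down to $v$ guarantees that at least $r+1$ vertices are deleted at each step, and the maximality of the depth of $v$ guarantees that every deleted vertex lies within tree-distance $r$ of $u$. This pairing of ``$\ge r+1$ deletions'' with ``all deletions covered'' is exactly the invariant your top-down version lacks.
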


\begin{proof}
	First replace $G$ by a spanning tree. Pick an arbitrary vertex as root. Let $v$ be a vertex furthest from the root. If $v$ is within distance $r$ from the root, then the root is already a one-element $r$-net.
	Otherwise, let $u$ be the vertex on the path from $v$ to the root and at distance exactly $r$ from $v$. 
	Add $u$ to the net and now delete the $\ge r+1$ vertices on the branch of the tree starting at $u$ (all deleted vertices are within distance $\le r$ from $u$ due to the choice of $v$). 
	Continue with the remaining tree.
\end{proof}

\begin{lemma}[Selecting a net in an expander]
\label{lem:net-expander}
Let $G$ be a connected $n$-vertex $c$-expander graph. 
Then for every $p \in [0,1]$ and positive integer $r \le c^{-1}\log(n/2)$, there exists an $r$-net of  size at most $((1-p)^{(1+c)^r} + p)n$.
\end{lemma}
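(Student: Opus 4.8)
The plan is to produce the net by the probabilistic method: take a random seed set and then patch up the vertices it fails to cover. First I would let $W_0 \subseteq V(G)$ contain each vertex independently with probability $p$, and set
\[
W := W_0 \cup \set{v \in V(G) : B_G(v,r) \cap W_0 = \emptyset}.
\]
By construction $W$ is automatically an $r$-net: a vertex $v$ with $d_G(v,W_0) > r$ has been thrown into $W$ itself, so every vertex is within distance $r$ of $W$. Hence it suffices to bound $\EE\abs{W}$ and then invoke averaging to extract a single outcome with $\abs{W} \le \EE\abs{W}$.

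For the expectation I would use linearity. Clearly $\EE\abs{W_0} = pn$, and for each fixed $v$ independence gives $\Pr[B_G(v,r) \cap W_0 = \emptyset] = (1-p)^{\abs{B_G(v,r)}}$, so
\[
\EE\abs{W} \le pn + \sum_{v \in V(G)} (1-p)^{\abs{B_G(v,r)}}.
\]
Thus the lemma reduces entirely to the ball-growth estimate $\abs{B_G(v,r)} \ge (1+c)^r$ for every vertex $v$, which upgrades the sum to $(1-p)^{(1+c)^r} n$.

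This ball-growth bound is the one substantive step, and it is where the hypothesis $r \le c^{-1}\log(n/2)$ is used. Write $s_j = \abs{B_G(v,j)}$; since the vertices at distance exactly $j+1$ from $v$ are precisely the outer vertex boundary $\partial B_G(v,j)$, we have $s_{j+1} = s_j + \abs{\partial B_G(v,j)}$, and the $c$-expander property gives $\abs{\partial B_G(v,j)} \ge c\,s_j$ as long as $s_j \le n/2$, hence $s_{j+1} \ge (1+c)s_j$ in that range. Starting from $s_0 = 1$ and iterating: if $s_0, \dots, s_{r-1}$ all stay $\le n/2$, then $s_r \ge (1+c)^r$ directly; otherwise, letting $j^* \le r$ be minimal with $s_{j^*} > n/2$, we get $s_r \ge s_{j^*} > n/2 \ge (1+c)^r$, where the last inequality is $(1+c)^r \le e^{cr} \le e^{\log(n/2)} = n/2$. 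Either way $\abs{B_G(v,r)} \ge (1+c)^r$, which combined with the displayed bound yields $\EE\abs{W} \le (p + (1-p)^{(1+c)^r})n$ and hence the desired net. The hard part is really just the bookkeeping around the $n/2$ threshold in the definition of a $c$-expander; everything else is a routine first-moment computation.
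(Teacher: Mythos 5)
Your proof is correct and follows essentially the same route as the paper's: a random seed set $W_0$ with inclusion probability $p$, augmented by the vertices left uncovered, with a first-moment bound using the ball-growth estimate $\abs{B_G(v,r)} \ge (1+c)^r$. Your careful iteration around the $n/2$ threshold just fills in a detail the paper states without proof.
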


\begin{proof}
Let $W_0$ be a random subset of $V(G)$ where each vertex is included independently with probability $p$. 
Let $W_1$ be the set of all vertices in $G$ that are at distance more than $r$ from $W_0$.
Then $W = W_0 \cup W_1$ is an $r$-net of $G$.

Since $G$ is a $c$-expander, every radius $r$ ball has at least $(1+c)^r$ vertices as long as $(1+c)^r \le n/2$.
So for every vertex $v$ of $G$, one has $\PP(v \in W_1) = (1-p)^{\abs{B_G(o, r)}} 
\le (1-p)^{(1+c)^r}$.
It follows that the expected size of $W$ is at most $(p +  (1-p)^{(1+c)^r}) n$, and thus there exists a net of at most this size.
\end{proof}

\begin{proof}[Proof of \cref{thm:finite-main}]
The result follows from \cref{lem:finite-r-s} and \cref{lem:net}. 
Note that if $\theta > 1/\wt\Delta$, then the conclusion 	$\mu_G[(1-\theta) x, x] \lesssim 1/\log_{\wt\Delta} (1/\theta)$ trivially follows from $\mu_G[(1-\theta) x, x] \le 1$. Thus we can assume $\wt\Delta \le \theta ^{-1} \le (1/10)\log_{\wt\Delta}(1/\delta)$, which allows us to apply \cref{lem:finite-r-s} with the $r, s, \epsilon_\net$ specified in the lemma. 
By \cref{lem:net}, we have $\epsilon_\net n \le \ceil{n/(r+1)} \lesssim n/r$ since it is assumed in \cref{thm:finite-expander} that $n \ge \log_{\wt\Delta}(1/\theta) = 10r$. 
So \cref{lem:finite-r-s} gives $\mu_G[(1-\theta)x,x] \lesssim \epsilon_\net \lesssim 1/r \lesssim 1/\log_{\wt\Delta}(1/\theta)$, as claimed.
\end{proof}

\begin{proof}[Proof of \cref{thm:finite-expander}]
The result follows from \cref{lem:finite-r-s} and \cref{lem:net-expander}.
Note that if $\theta > 1/\wt\Delta$,
then the conclusion $\mu_G[(1-\theta) x, x] \lesssim \theta^{c/40 \log\wt\Delta}$ follows trivially from $\mu_G[(1-\theta)x,x] \le 1$ (recall that $0 \le c \le 1$).
Thus we can assume $\wt\Delta \le \theta ^{-1} \le (1/10)\log_{\wt\Delta}(1/\delta)$, which allows us to apply \cref{lem:finite-r-s} with the $r, s, \epsilon_\net$ specified in the lemma. 
We have $r = \floor{(1/10)\log_{\wt\Delta} (1/\theta)} \le c^{-1}\log (n/2)$ (via the $n \ge 2\theta^{-c/(10\log \wt\Delta)}$ hypothesis in \cref{thm:finite-expander}).
Now apply \cref{lem:net-expander} with $p = (1+c)^{-r/2}$ to get an $r$-net with density
\[
\epsilon_\net \le (1-p)^{(1+c)^r} + p \le e^{-p(1+c)^r} + p \le e^{-(1+c)^{r/2}} + (1+c)^{-r/2} \lesssim e^{-cr/4} \le \theta^{-c/(40\log\wt\Delta)}.
\]
So \cref{thm:finite-expander} follows from  \cref{lem:finite-r-s}.
\end{proof}

\section{Unimodular random graphs} \label{sec:infinite}

We begin by recalling the notion of a unimodular random graph \cite{BS01,AL07}.
From now on, a ``graph'' is allowed to be infinite, though we only consider bounded degree graphs. 
A \emph{rooted graph} is a pair $(G,o)$ where $G$ is a graph and $o$ is a vertex in $G$.
The space of rooted graphs of degree at most $D$ can be endowed a distance: two rooted graphs have distance $2^{-r}$ where $r$ is the largest integer such that radius $r$ balls at the root are isomorphic as rooted graphs.
We consider a \emph{random rooted graph} $(G,o)$, with the understanding that the probability distribution should to be Borel measurable with respect to the topology given by the above metric (after identifying rooted graphs up to isomorphism classes).

A sequence $G_1, G_2, \dots$ of finite graphs of bounded degree is \emph{local convergent} (also known as \emph{Benjamini--Schramm convergent}) if, letting $o_n$ be a uniform random vertex of $G_n$, for every fixed $r\ge 0$, the random rooted finite graph $(B_{G_n}(o_n, r), o_n)$ convergences in distribution as $n \to \infty$.
The limit can be represented by a (possibly infinite) random rooted graph $(G,o)$ so that $(B_{G_n}(o_n, r), o_n)$ converges in distribution to $(B_G(o, r),o)$.

Any random rooted graph that is the limit of finite graphs enjoys a property known as unimodularity.
Given a rooted random graph $(G,o)$, let us first bias the distribution by the degree of $o$ in $G$ to obtain $(\wt G, \wt o)$ (i.e., for any function $f$ on root graphs, $\EE[f(\wt G,\wt o)] = \EE[\deg_G o]^{-1} \EE[f(G,o) \deg_G o]$)
Then, let $\wt x$ be a uniform random neighbor of $\wt o$.
As an illustrative special case, if $G$ is a finite graph and $o$ is a vertex chosen uniformly at random,
then $\wt o \wt x$ is a uniformly chosen edge along with an orientation.
A \emph{unimodular random graph} is a random rooted graph $(G,o)$ where $(\wt G, \wt o, \wt x)$ has the same distribution has $(\wt G, \wt x, \wt o)$.
This condition can be equivalently stated as the reversibility of a simple random walk.
It always holds for finite graphs with a uniformly chosen root, but does not necessarily hold for infinite rooted graphs even if we impose vertex transitivity---a famous example is the ``grandfather graph,'' which is a non-unimodular vertex-transitive infinite graph.
While every limit of finite graphs is unimodular, it is a major open problem whether every unimodular random graph arises as the limit of finite graphs; see \cite{AL07} for discussions.

We allow edge weights on $G$, although the reader is welcomed to assume unit edge weights for simplicity.
We can extend the definition of unimodularity by requiring that rooted graph isomorphisms also preserve edge weights. In the notation of the previous paragraph, $\wt x$ should be a uniform neighbor of $\wt o$ ignoring edge weights.

We shall apply unimodularity via the \emph{mass transport principle}: 
any nonnegative Borel measurable function $f_G(x,y)$ satisfies
\[
\EE \sum_{x \in V(G)} f_G(o,x) 
= 
\EE \sum_{x \in V(G)} f_G(x,o).
\]
The mass transport principle turns out to be equivalent to the reversibility characterization of unimodularity \cite{AL07}.
A word about measurability: we require that $(G,o,x) \mapsto f_G(o,x)$ is Borel measurable on the space of doubly rooted edge-weighted graphs.
Furthermore, it will be important to consider random functions $f_G(o,x)$ that depend Borel measurably on i.i.d.\ vertex labels of $G$.

\medskip

Let us now recall the definition of the spectral measure.
Given an edge-weighted graph $G$ with edge weight $w(x,y) = w(y,x)$ between $x,y \in V(G)$, we define the adjacency operator $A_G$ on $\ell^2(V(G))$, the Hilbert space of square summable functions on $V(G)$, by setting, for each $\psi \in \ell^2(V(G))$,
\[
(A\psi)(x) = \sum_{y \in V(G)} w(x,y) \psi(y).
\]
This extends the notion of the adjacency matrix of a finite graph.
In this paper, we only consider bounded degree graphs with bounded edge weights.
So $A_G$ is a bounded operator. 

The spectral theorem for bounded self-adjoint operators provides a spectral decomposition
\[
	A_G = \int x \, dP^G_x.
\]
where $E \mapsto P^G_E$ is a spectral measure. To be a \emph{spectral measure} means that $P^G_E$ satisfies the following properties:
\begin{enumerate}[(a)]
	\item for every measurable $E \subseteq \RR$, $P^G_E \colon \ell^2(V(G)) \to \ell^2(V(G))$ is an orthogonal projection;
	\item $P^G_\emptyset = 0$; $P^G_\RR$ is the identity;
	\item if $E_1, E_2, \dots$ are disjoint, and $E$ is their union, then $P^G_E = P^G_{E_1} + P^G_{E_2} + \cdots$;
	\item $P^G_{E_1\cap E_2} = P^G_{E_1} P^G_{E_2}$ for all $E_1, E_2$.
\end{enumerate}
The \emph{spectral subspace} $X^G_E$ is defined to be image of $P_E^G$.
In other words, $P_E^G$ is the orthogonal projection onto $X^G_E$.

Given a random rooted graph $(G,o)$ and a random closed subspace $X$ of $\ell^2(V(G))$ (the randomness of $X$ needs to be independent of $o$; in practice it will be a function of $G$ and its i.i.d.\ vertex labels), define
\[
\dim_G X = \EE \ang{1_o, \proj_X 1_o}.
\]
As an example, when $G$ is a finite graph and $o$ is a uniform vertex,
$\dim_G X$ is equal to the expected dimension of $X$ as a vector space divided by $\abs{G}$.

We define the \emph{spectral distribution} $\mu_G$ by setting, for every measurable $E \subseteq \RR$,
\[
\mu_G(E) = \EE \angs{1_o, P^G_E 1_o} = \dim_G X^G_E.
\]
This is a probability distribution on $\RR$.
It can be characterized by its moments, which correspond to the expected number of closed walks: for each integer $k \ge 0$, 
\begin{equation}\label{eq:unimod-return}
\int x^k \, d\mu_G(x) = \EE \angs{1_o, A_G^k 1_o},
\end{equation}
and note that $\angs{1_o, A_G^k 1_o}$ is the number of closed walks of length $k$ starting and ending at $o$ (if $G$ has edge weights, then we need to include weights in our walk count). 
For finite graphs, this agrees with the usual notion of the spectral distribution of the adjacency matrix.

\section{Spectral interlacing} \label{sec:interlacing}

The goal of this section is to prove an extension of the Cauchy eigenvalue interlacing theorem to unimodular random graphs (we refer the reader to \cite[Lemma 3.5, Theorem 3.9, Theorem 3.10]{BVInterlace} for further generalizations in the more abstract setting of tracial von Neumann algebras). 

\begin{theorem}[Spectral interlacing] \label{thm:interlacing}
	Let $(G,o)$ be a unimodular random edge-weighted graph with bounded degrees and bounded edge weights.
	Let $U \subseteq V(G)$ be a random subset of vertices whose distribution is independent of $o$.
	Let $H$ be obtained from $G$ by removing all edges incident to $U$.
	Then, for every $x \in \RR$,
	\[
		\abs{\mu_G(-\infty,x] - \mu_H(-\infty,x]} \le  \PP(o \in U).
	\]
\end{theorem}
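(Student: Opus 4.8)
The plan is to reduce the statement to a rank/dimension comparison of spectral subspaces, exactly as in the finite-dimensional proof of Cauchy interlacing, but carried out with the renormalized dimension $\dim_G$ in place of the usual rank. Recall that classically one shows $m_H(-\infty,x] \le m_G(-\infty,x] + 2\,\mathrm{rank}(A_G - A_H)$ by noting that $A_G - A_H$ has small rank and that on the orthogonal complement of its support the two operators agree. Here $A_G - A_H$ is the operator obtained by keeping only the entries of $A_G$ touching $U$; its "support" is contained in the span of $\{1_u : u \in U\}$ together with the neighborhoods, but the cleanest bound is that $A_G - A_H$ vanishes on the subspace $V := \{\psi \in \ell^2(V(G)) : \mathrm{supp}(\psi) \cap N[U] = \emptyset\}$ — actually it suffices that $\psi$ and $A_G\psi$ both be supported off $U$. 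So the first step is to identify a closed subspace $V \subseteq \ell^2(V(G))$, defined measurably from $(G,U)$, on which $A_G$ and $A_H$ agree and which has $\dim_G$-codimension at most $\PP(o \in U)$.

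Concretely I would take $V = \{\psi : \psi \equiv 0 \text{ on } U\} \cap \{\psi : A_H\psi \equiv 0 \text{ on } U\}$ — or even more simply, observe that $P := $ coordinate projection onto $\ell^2(V(G)\setminus U)$ satisfies $P A_G P = P A_H P = A_H$ (identifying $A_H$ with its restriction), and $\dim_G(\ker P) = \EE\ang{1_o, (I-P)1_o} = \PP(o \in U)$ by definition of $\dim_G$ and the mass transport / averaging interpretation. Then the second step is a min-max (Courant–Fischer) argument adapted to the $\dim_G$ functional: if $\mu_H(-\infty,x] = \dim_G X^H_{(-\infty,x]}$ is large, one intersects $X^H_{(-\infty,x]}$ (pulled back through $P$) with the spectral subspace where $A_G \le x$ fails as little as possible, using that $\langle \psi, A_G \psi\rangle = \langle P\psi, A_H P\psi\rangle \le x\|\psi\|^2$ for $\psi$ in the pullback intersected with $\ker(I-P)^\perp$. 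The key technical point that replaces "intersection of subspaces has dimension $\ge$ sum minus ambient dimension" is the inequality $\dim_G(X \cap Y) \ge \dim_G X + \dim_G Y - 1$ for random closed subspaces, which follows from $\dim_G$ being a trace (it is $\EE$ of a diagonal matrix entry of a projection, and the von Neumann dimension in the tracial von Neumann algebra generated by the crossed product / groupoid algebra of the unimodular random graph is a genuine dimension function). This is precisely the content cited as \cite[Lemma 3.5]{BVInterlace}, and I would either invoke it or reprove the one inequality I need via mass transport: for projections $p,q$ one has $\tau(p \wedge q) \ge \tau(p) + \tau(q) - 1$.

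Putting it together: let $q = P$ (projection onto $\ell^2(V \setminus U)$), so $\tau(q) = 1 - \PP(o\in U)$. Let $p = P^G_{(-\infty,x]}$. On the range of $q$, $A_G$ acts as $A_H$, so $q P^H_{(x,\infty)} q \le q A$-functional calculus shows $\mathrm{ran}(q) \cap \mathrm{ran}(P^H_{(-\infty,x]})$ lands inside a subspace on which $\ang{\psi,A_G\psi}\le x\|\psi\|^2$; by the variational characterization of $p$ this forces $\dim_G\big(\mathrm{ran}(q)\cap X^H_{(-\infty,x]}\big) \le \mu_G(-\infty,x]$. Combined with $\dim_G(\mathrm{ran}(q)\cap X^H_{(-\infty,x]}) \ge (1-\PP(o\in U)) + \mu_H(-\infty,x] - 1 = \mu_H(-\infty,x] - \PP(o\in U)$ this gives $\mu_H(-\infty,x] - \mu_G(-\infty,x] \le \PP(o\in U)$. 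The reverse inequality is symmetric: run the same argument with the roles of $A_G$ and $A_H$ swapped (now $A_H \le A_G$... no — rather, $A_G$ agrees with $A_H$ on $\mathrm{ran}(q)$ again, and one bounds $\mu_G(-\infty,x]$ from $\mu_H(-\infty,x]$), or simply note the whole setup is symmetric under $G \leftrightarrow H$ since $A_H$ is also obtained from $A_H$ by "removing edges incident to $U$" vacuously, and $q$ is the same. Taking both bounds yields the absolute value.

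The main obstacle I anticipate is making the variational/min-max step rigorous in infinite dimensions with the $\dim_G$ functional — in a fixed infinite graph, subspaces with equal $\dim_G$ can behave unintuitively, and one genuinely needs the tracial (von Neumann) framework so that $\dim_G$ is countably additive, monotone, and satisfies the parallelogram-type lower bound on intersections. I would set this up via the von Neumann algebra of the unimodular random graph (equivalently, the measurable groupoid ring with its canonical trace $\tau(T) = \EE\ang{1_o, T 1_o}$), check that $A_G$ and the coordinate projections $P$ are affiliated/contained in it and that spectral projections of $A_G$ therefore are too, and then the needed facts ($\tau$ faithful normal trace, Murray–von Neumann dimension) are standard. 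Everything else — that $A_G - A_H$ "lives on $U$", the mass transport computation of $\tau(P)$, the functional-calculus comparison — is routine once that scaffolding is in place.
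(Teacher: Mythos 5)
Your proposal is correct and follows essentially the same route as the paper: restrict to the subspace $\ell^2(V(G)\setminus U)$, of $\dim_G$-value $1-\PP(o\in U)$, on which the quadratic forms of $A_G$ and $A_H$ agree, bound the $\dim_G$ of its intersection with a spectral subspace of $A_H$ from below by a trace/dimension inequality and from above by the variational characterization of the spectral subspaces of $A_G$, then swap roles for the reverse inequality. The only difference is packaging: where you would invoke the tracial von Neumann algebra framework (or \cite{BVInterlace}) for $\dim_G(X\cap Y)\ge \dim_G X+\dim_G Y-1$, the paper proves the equivalent inequality $\dim_G X\le \dim_G Y+\dim_G(X\cap Y^{\perp})$ (\cref{lem:dim-ineq}) self-containedly via the polar decomposition of $\proj_Y\proj_X$ and the mass transport principle --- precisely the mass-transport fallback you sketch.
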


\begin{corollary}
	Under the same setup as \cref{thm:interlacing}, for any $x \le y$,
	\[
	\abs{\mu_G[x,y] - \mu_H[x,y]} \le 2 \PP(o\in U).
	\]
\end{corollary}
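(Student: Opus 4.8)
The plan is to deduce the Corollary from \cref{thm:interlacing} by writing a closed interval as a difference of two half-infinite intervals and applying the triangle inequality. First I would note that for any $x \le y$ and any graph, $\mu_G[x,y] = \mu_G(-\infty,y] - \mu_G(-\infty,x)$, where $\mu_G(-\infty,x)$ denotes the half-open interval; one small wrinkle is that \cref{thm:interlacing} is stated for the closed ray $(-\infty,x]$, so I would also want the analogous bound for the open ray $(-\infty,x)$. That open-ray bound follows either by applying \cref{thm:interlacing} at a sequence $x_n \uparrow x$ and using countable additivity (property (c) of the spectral measure) to pass to the limit $\mu_G(-\infty,x) = \lim_n \mu_G(-\infty, x_n]$, or by applying it at $x' = x - \epsilon$ directly; either way $\abs{\mu_G(-\infty,x) - \mu_H(-\infty,x)} \le \PP(o\in U)$.

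Granting both ray estimates, the computation is immediate:
\[
\abs{\mu_G[x,y] - \mu_H[x,y]}
= \abs{\bigl(\mu_G(-\infty,y] - \mu_G(-\infty,x)\bigr) - \bigl(\mu_H(-\infty,y] - \mu_H(-\infty,x)\bigr)}
\le \abs{\mu_G(-\infty,y] - \mu_H(-\infty,y]} + \abs{\mu_G(-\infty,x) - \mu_H(-\infty,x)}
\le 2\PP(o\in U),
\]
using the triangle inequality in the middle step. This handles the closed interval $[x,y]$; the statement in the excerpt is exactly for $[x,y]$, so nothing more is needed, though I would remark that the same argument gives the bound for half-open or open intervals as well, since each is a difference of two rays.

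I do not expect any genuine obstacle here — the content is entirely in \cref{thm:interlacing}, and the Corollary is a formal manipulation. The only point requiring a sentence of care is the passage from the closed ray $(-\infty,x]$ appearing in the theorem to the open ray $(-\infty,x)$ appearing implicitly when one subtracts; I would dispatch this by the monotone limit argument noted above, invoking $\sigma$-additivity of the spectral measure (and hence of $\mu_G$ and $\mu_H$). Everything else is the triangle inequality.
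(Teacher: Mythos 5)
Your proof is correct and is exactly the intended argument: the paper states this corollary without proof, treating it as immediate from \cref{thm:interlacing} via the decomposition $\mu[x,y]=\mu(-\infty,y]-\mu(-\infty,x)$ and the triangle inequality. Your care in passing from the closed ray $(-\infty,x]$ of the theorem to the open ray $(-\infty,x)$ by monotone limits (using countable additivity of $\mu_G$ and $\mu_H$) is the right way to dispatch the one small wrinkle.
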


To prove \cref{thm:interlacing}, we will use the polar decomposition theorem for bounded operators on Hilbert spaces \cite[Theorem VIII.3.11]{Conway}.
It says that for any bounded operator $S$, there is a partial isometry $T$ with initial space $(\ker S)^\perp$ and final space $\cl(\im S)$,
and so that $S = T \abs{S}$ where $\abs{S} := (S^*S)^{1/2}$.
Since $T$ is a partial isometry, $T^* T$ the orthogonal projection onto the initial space $(\ker S)^\perp$, and $TT^*$ is the orthogonal projection onto the final space $\cl(\im S)$.

Here is an extension of a basic fact about dimensions from linear algebra.

\begin{lemma} \label{lem:dim-ineq}
	Let $(G,o)$ be an edge-weighted unimodular random graph.
	Let $X$ and $Y$ be random closed subspaces of $\ell^2(V(G))$ (independent of $o$). Let $Z = X \cap Y^\perp$.  
	Then
	\[
	\dim_G X \le \dim_G Y + \dim_G Z.
	\]
\end{lemma}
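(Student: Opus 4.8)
The plan is to mimic the finite-dimensional rank inequality $\dim X \le \dim Y + \dim(X \cap Y^\perp)$, which in linear algebra follows from the fact that the projection $\proj_Y$ restricted to $X$ has kernel $X \cap Y^\perp$, so its image has dimension $\dim X - \dim(X\cap Y^\perp)$, and this image sits inside $Y$. In the unimodular setting, $\dim_G$ replaces the normalized dimension, and the key point will be that $\dim_G$ is \emph{monotone under inclusion} and \emph{subadditive} in the right sense, together with an analogue of rank-nullity for the operator $S := \proj_Y \proj_X$. First I would set $P = \proj_X$, $Q = \proj_Y$, and consider $S = QP$, a bounded operator that is a measurable function of $G$ and its i.i.d.\ labels (hence independent of $o$). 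I would take the polar decomposition $S = T\abs{S}$ with $T$ a partial isometry, initial space $(\ker S)^\perp$ and final space $\cl(\im S)$.

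The three facts I would assemble are: (i) $\cl(\im S) \subseteq \cl(\im Q) = Y$, so $\dim_G \cl(\im S) \le \dim_G Y$ by monotonicity of $\dim_G$ under subspace inclusion (which itself follows from the mass transport principle / the fact that $\proj$ onto a larger space dominates $\proj$ onto a smaller one as positive operators, so the diagonal entry $\ang{1_o, \proj\, 1_o}$ is monotone); (ii) $\ker S = \ker(QP)$ contains $X^\perp$ trivially but more to the point, on $X$ we have $Sx = Qx$, so $\ker S \cap X = X \cap \ker Q = X \cap Y^\perp = Z$; and (iii) a ``rank-nullity'' identity for $\dim_G$: $\dim_G \cl(\im S) + \dim_G \ker S = 1$ (equivalently $\dim_G (\ker S)^\perp = \dim_G \cl(\im S)$), which comes from the polar decomposition: $T^*T = \proj_{(\ker S)^\perp}$ and $TT^* = \proj_{\cl(\im S)}$, and the mass transport principle lets one transfer the diagonal of $T^*T$ to that of $TT^*$ (this is the standard argument that the ``$\Gamma$-dimension'' or von Neumann dimension is a trace, so partial isometries preserve it). Combining: $\dim_G X = \dim_G (X \cap (\ker S)^\perp) + \dim_G (X \cap \ker S) \le \dim_G (\ker S)^\perp + \dim_G Z = \dim_G \cl(\im S) + \dim_G Z \le \dim_G Y + \dim_G Z$, where the first equality needs that $X$ splits orthogonally into $X\cap\ker S$ and $X \cap (\ker S)^\perp$ — which holds because $\ker S \supseteq X^\perp$ forces $\ker S$ to be ``aligned'' with the decomposition $\ell^2 = X \oplus X^\perp$; more carefully, $\ker S \cap X = Z$ and $\ker S = Z \oplus X^\perp$ since $S$ kills $X^\perp$, so $(\ker S)^\perp \subseteq X$ and $(\ker S)^\perp = X \ominus Z$.

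The step I expect to be the main obstacle is establishing the rank-nullity identity $\dim_G (\ker S)^\perp = \dim_G \cl(\im S)$ with full rigor in the random/unimodular setting — i.e., verifying that the partial isometry $T$ from the polar decomposition is itself a measurable function of $(G, \text{labels})$ independent of $o$ (so that $\dim_G$ is even defined on its initial/final spaces), and then running the mass transport principle on the ``transport plan'' that moves mass according to the matrix entries of $T$ and $T^*$. Concretely one wants $\EE\ang{1_o, T^*T 1_o} = \EE \sum_x \abs{\ang{1_x, T 1_o}}^2 = \EE \sum_x \abs{\ang{1_o, T 1_x}}^2$ — wait, one must be careful with $T$ versus $T^*$ — the clean way is $\EE\ang{1_o, T^*T\,1_o} = \EE\sum_x \ang{1_o,T^*1_x}\ang{1_x,T1_o}$ and apply mass transport to $f_G(o,x) := \ang{1_o, T^* 1_x}\ang{1_x, T 1_o} = \overline{\ang{1_x,T1_o}}\,\ang{1_x,T1_o}$, versus $\ang{1_o,TT^*1_o} = \EE\sum_x \ang{1_x,T1_o}\overline{\ang{1_x,T1_o}}$ summed the other way; matching these via $\EE\sum_x f_G(o,x) = \EE\sum_x f_G(x,o)$ gives the identity. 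I would isolate this as the genuinely new input and treat the rest as bookkeeping with orthogonal projections.
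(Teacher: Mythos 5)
Your proposal is correct and follows essentially the same route as the paper: the same operator $S=\proj_Y\proj_X$, its polar decomposition $S=T\abs{S}$ with initial space $(\ker S)^\perp=X\cap Z^\perp$ and final space inside $Y$, the mass transport principle applied to $f_G(o,x)=\abs{\ang{1_x,T1_o}}^2$ to get $\dim_G(X\cap Z^\perp)=\EE\ang{1_o,T^*T1_o}=\EE\ang{1_o,TT^*1_o}\le\dim_G Y$, and the orthogonal splitting $X=(X\cap Z^\perp)\oplus Z$ to conclude. The step you flagged as the main obstacle is exactly the MTP computation the paper carries out, and your transport plan matches theirs.
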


\begin{proof}
	Let $Y' \le Y$ denote the image of the projection of $X$ on $Y$.
	Consider the polar decomposition of the operator $S = \proj_Y \proj_X$ as $S = T \abs{S}$, where $T$ is a partial isometry with initial space $(\ker S)^\perp = (X^\perp + Z)^\perp = X \cap Z^\perp$ and final space $Y'$. 
	So $T^*T =\proj_{(\ker S)^\perp} = \proj_{X \cap Z^\perp}$ and $TT^* = \proj_{\cl(\im S)}= \proj_{Y'}$.
	Applying the mass transport principle at step labeled $\mathsf{\scriptstyle(MTP)}$ below, we have
	\begin{align*}
	\dim_G X \cap Z^\perp 
	&= \EE \ang{1_o, \proj_{X \cap Z^\perp} 1_o}	
	= \EE \ang{1_o, T^* T 1_o}
	= \EE \ang{T 1_o, T1_o}
	\\
	&= \EE \sum_{v \in V(G)} \abs{\ang{1_v, T1_o}}^2
	\stackrel{\mathsf{\scriptscriptstyle(MTP)}}{=} \EE \sum_{v \in V(G)} \abs{\ang{1_o, T1_v}}^2
	= \EE \sum_{v \in V(G)} \abs{\ang{T^*1_o, 1_v}}^2 
	\\
	&= \EE \ang{T^*1_o, T^*1_o}
	= \EE \ang{1_o, TT^*1_o}
	= \EE \ang{1_o, \proj_{Y'} 1_o}
	\le \EE \ang{1_o, \proj_Y 1_o}
	= \dim_G Y.
	\end{align*}
	Since $Z \le X$, we have $X = (X \cap Z^\perp) \oplus Z$, and so 
	\[
	\dim_G X = \dim_G X \cap Z^\perp + \dim_G Z \le \dim_G Y + \dim_G Z. \qedhere
	\]
\end{proof}

\begin{lemma} \label{lem:local-large-spec}
	Let $(G,o)$ be an edge-weighted unimodular random graph with bounded degrees and bounded edge weights.
	Let $x \in \RR$.
	Let $X \subseteq \ell^2(V(G))$ be a random closed subspace (independent of $o$).
	If $\ang{\psi, A_G \psi}  > x$ for every unit vector $\psi \in X$,
	then 
	$
	\dim_G X \le \mu_G(x,\infty).
	$
\end{lemma}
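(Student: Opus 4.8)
The plan is to compare $X$ with the spectral subspace $X^G_{(-\infty,x]}$ via the dimension inequality in \cref{lem:dim-ineq}, and show that these two subspaces intersect trivially. Concretely, set $Y = X^G_{(x,\infty)}$, so that $Y^\perp = X^G_{(-\infty,x]}$ and $\dim_G Y = \mu_G(x,\infty)$. I would apply \cref{lem:dim-ineq} to get $\dim_G X \le \dim_G Y + \dim_G Z$ where $Z = X \cap Y^\perp = X \cap X^G_{(-\infty,x]}$. The crux is then to prove $Z = 0$, which by the definition $\dim_G Z = \EE\ang{1_o,\proj_Z 1_o}$ immediately gives $\dim_G Z = 0$ and hence $\dim_G X \le \mu_G(x,\infty)$, as desired. (Strictly, one must first check that $X^G_{(x,\infty)}$ is indeed a random closed subspace independent of $o$, which follows since it is a Borel function of $G$ alone via the spectral calculus, and $X$ is assumed independent of $o$ as well; this legitimizes invoking \cref{lem:dim-ineq}.)

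To see $Z = 0$: suppose $\psi \in X \cap X^G_{(-\infty,x]}$ is a unit vector. On one hand, the hypothesis gives $\ang{\psi, A_G\psi} > x$. On the other hand, since $\psi$ lies in the spectral subspace for $(-\infty,x]$, the spectral theorem gives
\[
\ang{\psi, A_G \psi} = \int_{(-\infty,x]} t \, d\ang{\psi, P^G_t \psi} \le x \int_{(-\infty,x]} d\ang{\psi, P^G_t\psi} = x \norm{\psi}^2 = x,
\]
a contradiction. Hence no such unit vector exists, so $Z = 0$.

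The main obstacle here is bookkeeping about measurability and the independence-from-$o$ hypothesis rather than any serious analytic difficulty: one needs $Z = X \cap X^G_{(-\infty,x]}$ to be a legitimate random closed subspace to which $\dim_G$ applies, and the randomness of $X$ (possibly depending on i.i.d.\ vertex labels) must be compatible with the randomness of $X^G_{(-\infty,x]}$ (a deterministic function of $G$) so that the intersection is still independent of $o$. This is routine given the framework set up in \cref{sec:infinite}, since an intersection of two closed subspaces each independent of $o$ is again independent of $o$. The only subtlety worth a sentence is that the strict inequality $\ang{\psi,A_G\psi} > x$ in the hypothesis is exactly what rules out the boundary point $x$ itself from causing trouble; if the hypothesis were $\ge x$ one could only conclude $\dim_G X \le \mu_G[x,\infty)$.
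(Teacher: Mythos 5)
Your proposal is correct and follows exactly the paper's argument: take $Y = X^G_{(x,\infty)}$, observe that $X \cap Y^\perp = X \cap X^G_{(-\infty,x]} = 0$ since unit vectors there would have to satisfy both $\ang{\psi, A_G\psi} > x$ and $\le x$, and conclude via \cref{lem:dim-ineq}. The spectral-calculus computation you include is exactly the (unstated) justification the paper relies on for the first step.
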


\begin{proof}
	Any unit vector in the spectral subspace $X^G_{(-\infty,x]} = (X^G_{(x,\infty)})^\perp$ satisfies $\ang{\psi, A_G \psi} \le x$.
	Thus $X \cap (X^G_{(x,\infty)})^\perp = 0$.
	By \cref{lem:dim-ineq}, $\dim_G X \le \dim_G X^G_{(x,\infty)} = \mu_G(x,\infty)$.
\end{proof}

We are now ready to prove the spectral interlacing theorem.

\begin{proof}[Proof of \cref{thm:interlacing}]
	For any $\psi \in X^H_{(x,\infty)}$ that vanishes on $U$, we have 
	$\ang{\psi, A_G \psi} = \ang{\psi, A_H \psi} > x$. So by \cref{lem:local-large-spec}, we have
	\[
	\dim_G (X^H_{(x,\infty)} \cap \ell^2(V(G) \setminus U)) \le \mu_G(x,\infty).
	\]
	So, by \cref{lem:dim-ineq},
	\begin{align*}
	\mu_H(x,\infty) - \PP(o \in U) 
	&= \dim_G X^H_{(x,\infty)} - \dim_G \ell^2(U)
	\\
	&\le \dim_G (X^H_{(x,\infty)} \cap \ell^2(V(G) \setminus U)) \le \mu_G(x,\infty).
	\end{align*}
	Thus 
	\[
	\mu_G(x,\infty) - \mu_H(x,\infty) \le \PP(o \in U).
	\]
	The same proof (with $>x$ replaced by $\le x$, etc.) also shows $\mu_G(-\infty,x] - \mu_H(-\infty,x] \le \PP(o \in U)$.
	Together they imply $\abs{\mu_G(-\infty,x] - \mu_H(-\infty,x]} \le \PP(o \in U)$.
\end{proof}

We also need the following lemma.

\begin{lemma} \label{lem:sep-large-rad}
	Let $(G,o)$ be an edge-weighted unimodular random graph with bounded degrees and bounded edge weights.
	Let $s$ be a positive integer. 	Let $x \in \RR$.
	Let $U \subseteq V(G)$ be a random subset (independent of $o$) such that $U$ is $2(s+1)$-separated and $\lambda_1(B_G(u,s)) > x$ for all $u \in U$.
	Then $\PP(o \in U) \le \mu_G(x,\infty)$.
\end{lemma}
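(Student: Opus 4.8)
The plan is to build a random closed subspace $X$ of $\ell^2(V(G))$ whose $G$-dimension equals $\PP(o \in U)$ and on which $A_G$ has quadratic form bounded below by $x$, after which Lemma~\ref{lem:local-large-spec} immediately gives $\PP(o\in U) \le \mu_G(x,\infty)$. For each $u \in U$, since $\lambda_1(B_G(u,s)) > x$, the Perron--Frobenius theorem furnishes a unit eigenvector $\phi_u \in \ell^2(V(B_G(u,s)))$ with $\langle \phi_u, A_{B_G(u,s)} \phi_u\rangle = \lambda_1(B_G(u,s)) > x$; extend $\phi_u$ by zero to all of $\ell^2(V(G))$. The key point is that $\phi_u$ is supported in $B_G(u,s)$, and because $U$ is $2(s+1)$-separated, the balls $B_G(u,s)$ for distinct $u \in U$ are pairwise disjoint and moreover have no edges between them; hence the $\phi_u$ are orthonormal and $\langle \phi_u, A_G \phi_{u'}\rangle = 0$ for $u \ne u'$. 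Let $X = \overline{\mathrm{span}}\{\phi_u : u \in U\}$. For a unit vector $\psi = \sum_u c_u \phi_u \in X$ (with $\sum |c_u|^2 = 1$), the cross terms vanish, so $\langle \psi, A_G \psi\rangle = \sum_u |c_u|^2 \langle \phi_u, A_G \phi_u\rangle > x \sum_u |c_u|^2 = x$. (A small point: to get strict inequality uniformly one can note that each $\langle \phi_u, A_G\phi_u\rangle > x$, and since the relevant graphs $B_G(u,s)$ range over a bounded-degree, bounded-edge-weight family there are only finitely many isomorphism types of radius-$s$ rooted balls, so these values are bounded away from $x$; alternatively one applies Lemma~\ref{lem:local-large-spec} with $x$ replaced by a value strictly between $x$ and the infimum, then lets it tend to $x$, or simply observes the argument of Lemma~\ref{lem:local-large-spec} works with the non-strict conclusion as well.)

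It remains to compute $\dim_G X$. Here I would use that the $\phi_u$ form an orthonormal basis of $X$, so $\proj_X = \sum_{u \in U} \phi_u \phi_u^*$ (a locally finite sum since the supports are disjoint), giving
\[
\dim_G X = \EE\langle 1_o, \proj_X 1_o\rangle = \EE \sum_{u \in U} |\langle \phi_u, 1_o\rangle|^2 = \EE \sum_{u \in U} |\phi_u(o)|^2.
\]
Now I apply the mass transport principle to $f_G(y,z) = \one[z \in U]\, |\phi_z(y)|^2$, which is a nonnegative Borel function of the doubly-rooted graph (it depends only on a bounded neighborhood, and the eigenvector $\phi_z$ can be chosen measurably, e.g.\ as the normalized Perron vector). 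Transporting mass from $o$ to $U$: $\sum_{z} f_G(o,z) = \sum_{u\in U}|\phi_u(o)|^2$, while $\sum_y f_G(y,o) = \one[o \in U]\sum_{y} |\phi_o(y)|^2 = \one[o\in U]$ since $\phi_o$ is a unit vector when $o\in U$. Hence $\dim_G X = \EE \one[o\in U] = \PP(o\in U)$.

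The main obstacle is the measurability bookkeeping: one must ensure $u \mapsto \phi_u$ is chosen in a Borel-measurable, isomorphism-equivariant way so that $f_G$ is a legitimate mass-transport function and $X$ is a bona fide random closed subspace independent of $o$. This is handled by noting $B_G(u,s)$ is a finite graph determined by a bounded neighborhood of $u$, that among the finitely many isomorphism types one can fix a canonical top eigenvector (e.g.\ via a deterministic tie-breaking rule using the vertex labels, which are part of the data and independent of $o$), and that the Perron--Frobenius eigenvector is unique up to scaling when the relevant component is connected — if $B_G(u,s)$ is disconnected one restricts $\phi_u$ to the component containing $u$, which still has spectral radius possibly less than $\lambda_1(B_G(u,s))$, so one should instead take the component achieving the maximum, again selected by a label-based tie-break. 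Once this is set up, the disjointness of supports from $2(s+1)$-separation makes all the operator-theoretic steps routine, and the result follows.
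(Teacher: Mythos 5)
Your proof is correct and follows essentially the same route as the paper's: build the orthonormal family of Perron vectors of the balls $B_G(u,s)$, use the $2(s+1)$-separation to kill cross terms so that \cref{lem:local-large-spec} applies, and identify $\dim_G X$ with $\PP(o\in U)$ via the mass transport principle. The side worries are harmless but unnecessary: each ball $B_G(u,s)$ is automatically connected, and strictness of $\ang{\psi,A_G\psi}>x$ survives passage to the closed span because any nonzero $\psi=\sum_u c_u\phi_u$ has some $c_u\neq 0$, giving $\sum_u\abss{c_u}^2\lambda_1(B_G(u,s))>x\sum_u\abss{c_u}^2$ directly (whereas the ``finitely many isomorphism types'' claim is not valid with continuously varying edge weights).
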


\begin{proof}
	For each $v \in V(G)$, if $v \in U$, then set $\psi_v \in \ell^2(V(G))$ to be the unique principal unit eigenvector with positive coordinates (i.e., the Perron vector) of $B_G(v,s)$, so that $\ang{\psi_v,\psi_v} = 1$, $\psi_v$ is supported on $B_G(v,s)$, and $\ang{\psi_v, A_G \psi_v} = \lambda_1(B_G(v,s)) > x$. 
	If $v \notin U$, then set $\psi_v = 0$. Let $X \le \ell^2(V(G))$ be the closure of the span of all $\psi_v$, $v \in V(G)$.
	
	Since $U$ is $2(s+1)$ separated, no vertex in the support of any $\psi_v$ is adjacent to a vertex in the support of another $\psi_u$. 
	All the nonzero $\psi_v$'s are orthogonal, and furthermore, for any nonzero finite linear combination $\psi = \sum_v c_v \psi_v$ with $c_v \in \CC$,
	we have $\ang{\psi, A_G \psi} = \sum_v \abs{c_v}^2 \ang{\psi_v, A_G \psi_v} > x \sum_v \abs{c_v}^2 = x \ang{\psi, \psi}$. 
	Hence, the same inequality holds for all nonzero $\psi \in X$.
	By \cref{lem:local-large-spec}, we have $\dim_G X \le \mu_G(x,\infty)$.
	
	Applying the mass transport principle at step labeled $\mathsf{\scriptstyle(MTP)}$ below, we have
	\begin{align*}
	\dim_G X 
	&= \EE \ang{1_o, \proj_X 1_o}
	\\
	&= \EE \sum_{v \in V(G)} \abs{\ang{1_o, \psi_v}}^2
	\stackrel{\mathsf{\scriptscriptstyle(MTP)}}{=} \EE \sum_{v \in V(G)} \abs{\ang{1_v, \psi_o}}^2
	= \EE \ang{\psi_o, \psi_o}
	= \PP(o \in U).
	\end{align*}
	Therefore, $\PP(o\in U) \le \mu_G(x,\infty)$.
\end{proof}

\section{Local selection} \label{sec:local}

In the proof of \cref{thm:2nd-eig}, we selected a large $s$-separated set as well as a small $r$-net. 
In the setting of random rooted graphs, it is important that our selection is independent of the root. 
For example, when the graph is an infinite path, all vertices are identical and unlabeled,
and so to select a subset of vertices (e.g., a large $s$-separated set or a small $r$-net), 
we need randomness.
We introduce i.i.d.\ random labels at the vertices.
For the random vertex selection to be well defined, it needs to depend Borel measurably on the vertex labels. 
The vertex selection also needs to respect unimodularity.
The simple way to ensure Borel measurability is to have each vertex $v$ decide its own inclusion based on the labels of vertices in its radius $R$ neighborhood for some finite $R$. This is the approach that we take.

This notion of local selection arises in the study of distributed graph algorithm. See Bernstein's recent survey \cite{Ber22} for a discussion of the connections between descriptive combinatorics and distributed algorithms. 
Imagine that the graph $G$ represents a network of computers, with each vertex representing a computer that can only communicate with its graph neighbors. 
Initially, the computers do not know anything about the network besides who its neighbors are.
At each time step, synchronously, each computer can send an arbitrary message to each of its neighbors. 
After $R$ steps, each computer can learn information about its radius $R$ neighborhood.
Each computer then must make a decision based on this local information (e.g., assign itself a color).

Our goal is to construct a random function $f$ on $V(G)$ (e.g., a vertex coloring). We say that $f$ is \emph{$R$-local} if it can be constructed in the following way: first assign an i.i.d.\ random label at each vertex, and then have each vertex $v$ simultaneously decide its value $f(v)$ deterministically using the isomorphism class of the rooted ball of radius $R$ at $v$ along with the random labels of vertices in this ball. 
We say that $f$ is \emph{local} if it is $R$-local for some finite constant $R$. If $f$ is $\{0,1\}$-valued, then it corresponds to selecting a random subset $U \subset V(G)$. For a unimodular random graph $(G,o)$ and such an $R$-local random subset $U$ of vertices, we define the \emph{density} of $U$ to be $\PP(o \in U)$. Note that here $U$ is independent of $o$.

Many easy graph algorithm tasks turn out to be hard locally.
For example, while it is trivial to find a maximal independent set via the greedy algorithm, a classic result of Linial~\cite{Lin92} implies that it is impossible to do so locally on an infinite path.
Remarkably, the minimum $R$ so that there exists an $R$-local maximal independent set in an $n$-cycle is $R = \Theta(\log^* n)$ \cite{CV86,Lin92}. This seminal result inspired much subsequent work on distributed graph algorithms.

In the proofs in \cref{sec:finite}, we used non-local greedy methods to select a large $s$-separated set and a small $r$-net. We need local alternatives. First, finding a large separated set is easy.

\begin{lemma}[Selecting a separated set] \label{lem:unimod-sep}
	Let $\Delta\ge 2, r, R$ be positive integers.
	Let $(G,o)$ be an edge-weighted unimodular random graph with maximum degree at most $\Delta$.
	Let $U \subseteq V(G)$ be an $R$-local random subset.
	Then there exists an $(r+R)$-local random $U_0 \subseteq U$ such that $U_0$ is $r$-separated in $G$ and has density $\PP(o \in U_0) \ge \Delta^{-r} \PP(o \in U)$.
\end{lemma}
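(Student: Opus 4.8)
The plan is to select $U_0$ from $U$ by a local randomized greedy procedure based on the i.i.d.\ vertex labels. Concretely, I would enlarge the label alphabet so that, in addition to the labels already used to define $U$ (recall $U$ is $R$-local), each vertex $v$ also carries an independent uniform real number $t_v \in [0,1]$. Declare $v \in U_0$ if and only if $v \in U$ and $t_v < t_w$ for every other $w \in U$ with $0 < d_G(v,w) \le r$. This is manifestly $(r+R)$-local: to decide membership of $v$ one only needs the radius-$(r+R)$ ball at $v$, since determining which vertices of $B_G(v,r)$ lie in $U$ requires the radius-$R$ balls at those vertices, all contained in $B_G(v,r+R)$, together with the labels $t_w$ for $w \in B_G(v,r)$. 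Note $U_0$ is independent of $o$ because the construction only references $G$ together with the i.i.d.\ labels. By construction $U_0$ is $r$-separated in $G$: if $v, w \in U_0$ with $0 < d_G(v,w) \le r$ then $t_v < t_w$ and $t_w < t_v$, a contradiction (ties have probability zero).

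The remaining task is the density bound $\PP(o \in U_0) \ge \Delta^{-r}\PP(o \in U)$. The natural approach is a mass-transport argument. For a rooted graph $G$ with labels and vertices $v, w$, define $f_G(v,w) = \one[v \in U,\ w \in U_0,\ d_G(v,w) \le r]$, i.e.\ each vertex of $U$ sends a unit of mass to the (unique, a.s.) vertex of $U_0$ within distance $r$ that has minimal $t$-value among $U \cap B_G(\cdot,r)$. Wait — I must be careful that $v$ itself sends mass to the minimizer of $U$ in $B_G(v,r)$, which lies in $U_0$; that minimizer is well-defined a.s.\ and within distance $r$. The mass transport principle gives $\EE\sum_w f_G(o,w) = \EE\sum_v f_G(v,o)$. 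The left side equals $\PP(o \in U)$, since each $v \in U$ sends out exactly one unit. The right side equals $\EE\big[\one[o \in U_0] \cdot \#\{v \in U : o \text{ is the minimizer in } B_G(v,r)\}\big] \le \EE\big[\one[o\in U_0] \cdot |B_G(o,r)|\big] \le \Delta^r \PP(o \in U_0)$, using that $|B_G(o,r)| \le \Delta^r$ for bounded-degree $G$ with $\Delta \ge 2$ (here $|B_G(o,r)| \le 1 + \Delta + \cdots + \Delta^r \le \Delta^{r+1}$; a slightly more careful count or a harmless adjustment of the exponent recovers $\Delta^r$, or one simply uses that the in-degree of the transport is at most the number of vertices $v$ with $o \in B_G(v,r)$, which is at most $\Delta^r$ if we define the ball to have $\le \Delta^r$ vertices — I would just state the ball-size bound consistent with the paper's conventions). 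Rearranging gives $\PP(o \in U_0) \ge \Delta^{-r}\PP(o \in U)$.

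The main obstacle I anticipate is purely bookkeeping: ensuring measurability of the enlarged-label construction and getting the ball-size constant to come out as exactly $\Delta^{-r}$ rather than, say, $\Delta^{-r-1}$ or $(\Delta^r - \Delta^{r-1} + \cdots)^{-1}$. Since the paper elsewhere freely uses $|B_G(v,R-1)| \le \Delta^R$ and similar, I would adopt whichever convention makes the stated bound clean — most cleanly, bound the number of $v$ with $o \in B_G(v,r)$, i.e.\ $v \in B_G(o,r)$, by $\Delta^r$ exactly as in the proof of \cref{prop:finite-param}. A secondary subtlety is the a.s.\ uniqueness of minimizers and the fact that every $v \in U$ has a nonempty $U \cap B_G(v,r)$ (it contains $v$), so the transport is genuinely well-defined with out-degree exactly $1$ on $U$; this is routine. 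No deep idea is needed beyond the observation that the "local greedy by random priorities" selection is both local with the right radius and analyzable by mass transport.
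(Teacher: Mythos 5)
Your construction of $U_0$ (fresh i.i.d.\ priorities $t_v$, keep the vertices of $U$ whose priority is extremal among $U$-vertices within distance $r$) is the same as the paper's, and your locality and separation arguments are fine. The gap is in the density bound. Your mass-transport argument rests on the claim that the $t$-minimizer $w^*$ of $U\cap B_G(v,r)$ lies in $U_0$, so that every $v\in U$ sends out exactly one unit of mass. That claim is false: membership of $w^*$ in $U_0$ requires $t_{w^*}<t_u$ for all $u\in U\cap B_G(w^*,r)$, and $B_G(w^*,r)$ is not contained in $B_G(v,r)$, so $w^*$ can be beaten by a $U$-vertex lying outside $B_G(v,r)$. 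Consequently $U_0\cap B_G(v,r)$ can be empty for a vertex $v\in U$ with positive probability---local extrema of random priorities form a separated set but \emph{not} a dominating set of $U$, which is precisely why maximal independent sets are hard to construct locally, as the paper recalls via Linial's theorem. So the left side of your mass-transport identity, $\EE\bigl[\mathbf{1}_{o\in U}\,\abs{U_0\cap B_G(o,r)}\bigr]$, need not equal (or dominate) $\PP(o\in U)$, and the argument does not close.

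The repair is simpler than mass transport and is what the paper does. Since the priorities are i.i.d.\ and independent of $(G,U)$, condition on $(G,U)$ and on the event $o\in U$: by exchangeability of the labels on the finite set $U\cap B_G(o,r-1)$, the root is the extremum with probability exactly $\abs{U\cap B_G(o,r-1)}^{-1}\ge\Delta^{-r}$, whence $\PP(o\in U_0)\ge\Delta^{-r}\PP(o\in U)$. Note that the paper compares labels at distance $<r$ rather than $\le r$: this still forces $r$-separation (two selected vertices at distance $<r$ would each have to beat the other), and it is exactly what makes the constant come out as $\Delta^{-r}$ via $\abs{B_G(o,r-1)}\le\Delta^{r}$, resolving the bookkeeping worry you raise; with the ``$\le r$'' comparison you would only get roughly $\Delta^{-(r+1)}$.
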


\begin{proof}
	Assign each $v \in U$ an i.i.d. uniform label from $[0,1]$ (assign the label $0$ to all vertices outside $U$).
	Include $v$ in $U_0$ if $v \in U$ and its label is larger than all labels within distance $< r$ of $v$. 
	Then $U_0$ is $r$-separated, since otherwise we would have two vertices of $U$ within distance $<r$ each with a label larger than the other.
	Also $\PP(o \in U_0) = \abs{B_G(o, r-1)\cap U}^{-1} \PP(o \in U)  \ge \Delta^{-r} \PP(o \in U)$.
\end{proof}

It is more subtle how to locally pick a large $r$-net.
This is achieved in the next lemma.
The idea is to first randomly elect leaders (captains), who partition the graph into Voronoi cells. We then apply the the finite graph result (\cref{lem:net}) to select a good net in each Voronoi cell. To make sure that the selection is local, we restrict each Voronoi cell to a radius $R$ ball of the captain. We also add to our net any vertices too far from all captains, and this should only be a small fraction of the vertices.

\begin{lemma}[Selecting a net] \label{lem:unimod-net}
	For every positive integers $\Delta \ge 2$ and $r$, there exists a finite $R$ such that 
	if $(G,o)$ is an infinite connected edge-weighted unimodular random graph with maximum degree at most $\Delta$, 
	then $G$ has an $R$-local random $r$-net $W$ with density $\PP(o \in W) \le  1/r$.
\end{lemma}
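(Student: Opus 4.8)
The plan is to localize the greedy net-selection argument of \cref{lem:net} by partitioning $G$ into finite-radius Voronoi cells around a sparse set of ``captains,'' running the finite-graph lemma inside each cell, and absorbing the leftover vertices (those too far from every captain) into the net. First I would fix a large integer $L$ (to be chosen as a function of $\Delta$ and $r$, roughly $L \sim r \log r$ or a suitable polynomial, so that $1/L \le 1/(2r)$ after the final count) and select a maximal $L$-separated subset $C \subseteq V(G)$ locally: by \cref{lem:unimod-sep} applied with $U = V(G)$ this gives an $L$-local random $C$, and maximality (choose a vertex into $C$ when its i.i.d.\ label exceeds all labels within distance $<L$; this is already maximal) ensures every vertex of $G$ is within distance $L$ of $C$. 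Each vertex $v$ is assigned to its nearest captain, breaking ties by the captain's label and then by a fixed deterministic rule on the isomorphism class of the ball; this Voronoi assignment is $(2L+1)$-local since $v$ only needs to see captains within distance $2L$ to know which one is nearest. The cell $C(\kappa)$ of a captain $\kappa$ is contained in $B_G(\kappa, 2L)$ (any vertex assigned to $\kappa$ is at distance $\le L$ from $\kappa$, but I will only keep the part within $B_G(\kappa,L)$ and dump the rest, see below), so each cell is a finite connected graph on at most $\Delta^{2L}$ vertices.

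Next, inside each Voronoi cell $C(\kappa)$ — which is a finite graph, and which I can make connected by restricting to the vertices whose nearest-captain shortest path stays inside the cell, or more simply by taking the cell to be exactly $\{v : d_G(v,\kappa) \le d_G(v,\kappa') \text{ for all captains } \kappa'\} \cap B_G(\kappa, L)$ together with the induced shortest-path structure to $\kappa$ — I apply \cref{lem:net} to obtain an $r$-net of that cell of size $\lceil |C(\kappa)|/(r+1)\rceil$. Crucially this selection is a deterministic function of the finite cell together with its labels, and the cell lies inside $B_G(\kappa, L) \subseteq B_G(v, 2L)$ for any $v$ in it, so the resulting set $W_0 := \bigcup_\kappa \{r\text{-net of } C(\kappa)\}$ is $(2L)$-local (each vertex can determine whether it is chosen by examining its radius-$2L$ neighborhood: it sees its captain, reconstructs the cell, and runs the deterministic finite procedure). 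Let $W_1$ be the set of vertices at distance exactly $L$ from their captain together with those whose nearest-captain path left the cell — i.e., the vertices we had to ``dump'' because we truncated cells to radius $L$ and because \cref{lem:net}'s net only covers the cell, not the truncation boundary. Then $W := W_0 \cup W_1 \cup C$ is an $r$-net of $G$: any vertex $v$ not in $W_1$ lies in a full cell and is within $r$ of $W_0$ inside that cell (\cref{lem:net} covers the cell), and the remaining vertices are handled since $C \subseteq W$.

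The density bound is the routine counting step. By the mass-transport principle, $\PP(o \in C) \le \Delta^{-L}$ wait — more carefully, a maximal $L$-separated set has $\PP(o \in C) \le 1$ but each cell has $\ge (\text{something})$ vertices only if $G$ has enough volume growth, which need not hold, so instead I bound $\PP(o \in C)$ by noting cells are disjoint and each nonempty cell contains its captain, giving $\PP(o \in C) = \EE[\#\{\text{captains } \kappa : o \in C(\kappa)\}]$... the clean route is: assign to each captain the mass of its cell via mass transport, so $\PP(o \in C) \cdot \EE[|C(\kappa_o)| \mid o \in C] = 1$, hmm — the genuinely clean statement is that $\PP(o\in W_0) \le \PP(o \in C(\kappa)) \cdot \frac{1}{r+1} \cdot (1+o(1))$ by mass-transporting the net of each cell to the cell's vertices uniformly, which gives $\PP(o \in W_0) \le \frac{1}{r+1} + \Delta^{-L}$; and $\PP(o \in W_1) \le \PP(d_G(o, \text{nearest captain}) = L) \le \PP(o \in C) \cdot (\text{shell-to-ball ratio})$, which I bound by choosing $L$ large enough that the expected fraction of ``dumped'' vertices is at most $1/(10r)$ — this requires an averaging argument over $L \in \{L_0, L_0+1, \dots, L_0 + 10r\}$: since the shells $\{d_G(\cdot, \kappa) = j\}$ for distinct $j$ are disjoint, at least one value of $L$ in this range has shell-mass $\le 1/(10r)$ times the ball mass, and I fix that $L$. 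Finally $\PP(o \in C) \le 1/(10r)$ follows for the same reason (captains are $L$-separated, and one may instead just fold $C$ into $W_1$'s count). Summing, $\PP(o\in W) \le \frac{1}{r+1} + O(1/r) \le 1/r$ after adjusting constants — if the bound comes out as $c/r$ with $c>1$ I instead start from a finite-graph net lemma with parameter $cr$ in place of $r$ and rescale, which only changes $R$.

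\textbf{Main obstacle.} The delicate point is the handling of the truncation set $W_1$: truncating Voronoi cells to a fixed radius $L$ is what makes the procedure local, but it creates a boundary shell that is not covered by the per-cell nets, and I must show this shell has small density \emph{uniformly}, independent of the (unknown, possibly slow or irregular) volume growth of $G$. The averaging-over-$L$ trick handles this — among $\Theta(r)$ consecutive radii, the telescoping sum of shell masses is at most the total mass $1$, so some radius has shell mass $\le 1/\Theta(r)$ — but one must be careful that changing $L$ also changes which vertices are captains and how cells are shaped; the fix is to first fix a maximal $L_{\max}$-separated captain set $C$ once and for all with $L_{\max} = L_0 + 10r$, then define cells by truncating at the good radius $L \le L_{\max}$ (captains stay $L_{\max} \ge L$ separated, hence still pairwise non-adjacent cell-wise, and still within $L_{\max}$, so coverage by $C \cup W_1$ at the outer radius is fine). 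With $C$ fixed, the shell masses at different radii genuinely are disjoint and the averaging goes through. The resulting $R$ is $R = 2L_{\max} = 2L_0 + 20r$ with $L_0$ an absolute constant, which is finite as required.
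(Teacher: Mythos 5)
Your overall architecture --- elect captains, partition into finite Voronoi cells, run the finite-graph net lemma (\cref{lem:net}) inside each cell, and dump leftover vertices into the net --- is exactly the paper's strategy. But there is a genuine gap at the very first step: you claim that the locally selected set $C$ (include $v$ when its i.i.d.\ label exceeds all labels within distance $<L$) is a \emph{maximal} $L$-separated set, so that every vertex is within distance $L$ of a captain. This is false. That rule produces an $L$-separated set, but a vertex that is not a local maximum need not be near any local maximum: its ``beater'' may itself be beaten by a vertex farther away, and the chain of beaters can wander arbitrarily far, so with positive density there are vertices at distance much larger than $L$ from $C$. Indeed, locally computing a maximal independent set (equivalently, a maximal separated set) is impossible even on the infinite path by Linial's theorem, which the paper cites in \cref{sec:local} precisely to flag this obstruction. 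Your leftover set $W_1$ only collects truncation-boundary vertices of cells that \emph{do} have a nearby captain, and your averaging-over-radii trick bounds shell masses of existing cells; neither accounts for the captain-free regions, so the resulting $W$ is not an $r$-net.

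The paper's fix is to take captains to be a Bernoulli($p$) subset, explicitly introduce the class $U$ of \emph{unassigned} vertices (no captain within distance $R$), add $U$ to the net, and bound its density by $(1-p)^{|B_G(o,R)|}\le(1-p)^R$ using that an infinite connected graph has $|B_G(o,R)|\ge R$; choosing $p$ small and then $R$ large makes all error terms at most $1/r$ in total. A secondary point: with Bernoulli captains some cells can be tiny, so the rounding in $\lceil |V_\kappa|/(r+1)\rceil$ is not a harmless $(1+o(1))$ as you assert; the paper controls it by showing a small cell forces two captains within distance $12r^2$, an event of probability at most $\Delta^{12r^2}p^2$. If you replace your maximality claim with the Bernoulli construction and the unassigned-vertex accounting, the rest of your argument goes through.
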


\begin{proof}
	We set $p > 0$ to be a sufficiently small constant and $R$ a sufficiently large constant, both depending only on $\Delta$ and $r$, to specified later.

	Assign each vertex $v \in V(G)$ an i.i.d.\ uniform label $\xi_v$ from $[0,1]$.
	We call $v$ a \emph{captain} if $\xi_v \le p$. 
	For each vertex $v$, if there exists a captain within distance $\le R$ of $v$, then assign $v$ to \emph{report} to its nearest captain (breaking ties among nearest captains by taking the captain with the lower label). In particular, each captain reports to itself.
	If no captain lies within distance $\le R$ of $v$, then we say that $v$ is  \emph{unassigned}. Let $U$ be the set of unassigned vertices.  	
	We have 
	\[
		\PP(o \in U) = \PP(\text{no captain in $B_G(o, R)$}) = (1-p)^{\abs{B_G(o, R)}} \le (1-p)^R.
	\]
	
	Let $V_v$ denote the set of all vertices reporting to $v$ if $v$ is a captain,
	and let $V_v = \emptyset$ if $v$ is not a captain.
	Note that each $V_v$ induces a connected subgraph. Indeed, if $u$ reports to $v$, then every vertex on a shortest path from $u$ to $v$ also reports to $v$. 
	By \cref{lem:net}, we can select an $r$-net $W_v$ of $G[V_v]$ such that $\abs{W_v} = \ceil{\abs{V_v} / (r+1)}$ (set $W_v = \emptyset$ if $v$ is not a captain).
	We can make sure that the choice of $W_v$ is $R$-local by fixing a deterministic way to choose an $r$-net of desired size in each finite connected graph. 
	Let $W = \bigcup_{v \in V(G)} W_v$.
	
	By the mass transport principle,
	\[
	\EE \abs{V_o} 
	= \EE \sum_{v \in V(G)} \PP(v \in V_o) 
	= \EE \sum_{v \in V(G)} \PP(o \in V_v)
	\le 1
	\]
	since the sets $V_v$ are disjoint as $v$ ranges over $V(G)$.
	Applying the mass transport principle again,
	\begin{align*}
	\PP(o \in W) 
	&= \EE \sum_{v \in V(G)} \PP(o \in W_v) 
	= \EE \sum_{v \in V(G)} \PP(v \in W_o)
	= \EE \abs{W_o}
	\\
	&= \EE \ceil{\frac{\abs{V_o}}{r+1}}
	\le \EE \frac{\abs{V_o}}{r + 1/2} + \PP(0 < \abs{V_o} < 6r^2)
	\le  \frac{1}{r + 1/2} + \PP(0< \abs{V_o} < 6r^2),
	\end{align*}
	where the penultimate step comes from observing that $\ceil{n/(r+1)} > n/(r+1/2)$ implies $1 > n/(r+1/2) - n/(r+1) \ge n / (6 r^2)$.
	Finally, if $0 < \abs{V_o} < 6r^2$, then $o$ is captain (which occurs with probability $p$) and there is another captain with distance less than $12r^2$ from $o$ (probability $\le \abs{B_G(v, 12r^2-1)} p \le \Delta^{12r^2} p$). Thus 
	\[
	\PP(0 < \abs{V_o} < 6r^2) \le \Delta^{12r^2} p^2.
	\]
	
	Hence
	\[
	\PP(o \in U \cup W) \le \frac{1}{r+1/2} + \Delta^{12r^2} p^2 + (1-p)^R.
	\]
	By first choosing $p > 0$ sufficiently small and then $R$ sufficiently large (both functions of $r$ and $\Delta$) we can ensure that $\PP(o \in U \cup W) \le 1/r$.
\end{proof}

The proof of \cref{lem:net-expander} also carries over to the infinite setting.

\begin{lemma}[Selecting a net in an expander] \label{lem:unimod-net-expander}
Let $(G,o)$ be an infinite connected edge-weighted $c$-expander unimodular random graph with finite maximum degree.
Let $p \in [0,1]$.
Let $r$ be a positive integer.
Then $G$ has an $r$-local random $r$-net $W$ (where $W$ is independent of $o$) with density $\PP(o \in W) \le (1-p)^{(1+c)^r} + p$.
\end{lemma}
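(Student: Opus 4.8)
The plan is to mimic the proof of the finite expander net lemma (\cref{lem:net-expander}) essentially verbatim, replacing the deterministic ``expected size'' argument with the density formalism of $R$-local selections. First I would assign each vertex $v \in V(G)$ an i.i.d.\ uniform label $\xi_v \in [0,1]$ and declare $v \in W_0$ precisely when $\xi_v \le p$; this is a $0$-local selection with $\PP(o \in W_0) = p$. Then let $W_1$ be the set of vertices at distance $> r$ from $W_0$; deciding whether $v \in W_1$ only requires inspecting the labels of $B_G(v,r)$, so $W_1$ (hence $W := W_0 \cup W_1$) is $r$-local, and all of these selections are independent of $o$. By construction every vertex is within distance $r$ of $W_0 \subseteq W$, so $W$ is an $r$-net.

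It remains to bound the density $\PP(o \in W) \le \PP(o \in W_0) + \PP(o \in W_1)$. The first term is $p$. For the second term, note $\PP(o \in W_1) = \EE\big[(1-p)^{\abs{B_G(o,r)}}\big]$ by conditioning on the ball $B_G(o,r)$ and using independence of the labels from the rooted isomorphism class; here I should be slightly careful that $\abs{B_G(o,r)}$ is a function of $(G,o)$ alone and the labels on $B_G(o,r)$ are i.i.d.\ uniform independent of it. Since $(G,o)$ is a $c$-expander almost surely, and the required inequality $\abs{B_G(o,r)} \ge (1+c)^r$ holds for \emph{all} $r$ in the infinite setting (unlike the finite case, there is no ``$\le n/2$'' caveat to worry about, because balls can never exhaust the infinite graph — one applies the expansion property to the finite sets $B_G(o,0) \subsetneq B_G(o,1) \subsetneq \cdots$), we get $(1-p)^{\abs{B_G(o,r)}} \le (1-p)^{(1+c)^r}$ pointwise, hence $\PP(o \in W_1) \le (1-p)^{(1+c)^r}$. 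Adding the two bounds gives $\PP(o \in W) \le (1-p)^{(1+c)^r} + p$, as claimed.

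There is essentially no obstacle here beyond bookkeeping: the only point requiring a moment's thought is the measurability/locality claim, namely that ``is $o$ at distance $> r$ from a vertex with label $\le p$'' is a Borel function of the radius-$r$ rooted labelled ball, which is immediate, and that this event is independent of $o$ in the unimodular sense, which holds because the label assignment is i.i.d.\ and $o$-agnostic. The expansion-of-balls step deserves one sentence to spell out since the $c$-expander hypothesis for unimodular random graphs is phrased for arbitrary finite $S$, and one invokes it along the nested chain of balls at $o$ to conclude $\abs{B_G(o,r)} \ge (1+c)^r$ with probability $1$.
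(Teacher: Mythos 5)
Your proof is correct and follows essentially the same route as the paper's: take $W_0$ by independent $p$-sampling, let $W_1$ be the vertices at distance more than $r$ from $W_0$, and bound $\PP(o\in W_1)$ by $(1-p)^{(1+c)^r}$ using the expansion of nested balls. The extra remarks on locality, measurability, and the absence of the finite-case $n/2$ caveat are correct refinements of the same argument.
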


\begin{proof}
Let $W_0$ be a random subset of $V(G)$ where each vertex is included independently with probability $p$. 
Let $W_1$ be the set of all vertices in $G$ that are at distance more than $r$ from $W_0$.
Then $W = W_0 \cup W_1$ is an $r$-local random $r$-net.
We have $\PP(o \in W_0) = p$.
Since $G$ is a $c$-expander, every radius $r$ ball has at least $(1+c)^r$ vertices. 
So $\PP(o \in W_1) = (1-p)^{\abs{B_G(o, r)}} 
\le (1-p)^{(1+c)^r}$.
Thus $\PP(o \in W) \le p + (1-p)^{(1+c)^r}$.
\end{proof}

\section{Proof of spectral non-concentration for unimodular random graphs} 
\label{sec:unimod-pf}

Here is an analog of \cref{prop:finite-param} for infinite graphs.

\begin{proposition}
	 \label{prop:inf-param}
	Let $\Delta \ge 2$.
	Let $(G,o)$ be a connected infinite edge-weighted unimodular random graph with maximum degree at most $\Delta$ and all edge weights in the interval $[w_{\min}, w_{\max}]$ with $0 < w_{\min} \le  w_{\max}$.
	Let $r,s > 0$.
	Suppose $G$ has a random $r$-net $W$ ($W$ is independent of $o$) with $\PP(o \in W ) = \epsilon_\net$.
	Let $x \ge w_{\min}$ and $\delta = \mu_G(x,\infty)$.
	Let $\theta \in [0,1)$.
	Then
	\[
	\mu_G[(1 - \theta)x, x]
	\le
	(1-\theta)^{-2s} \paren{1 - \paren{\frac{w_{\min}}{x}}^{2r}}^{s/r}
	 + 2\delta \Delta^{2(s+2)}
	 + 2 \epsilon_\net .
	\]
\end{proposition}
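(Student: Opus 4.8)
The plan is to mirror the proof of \cref{prop:finite-param} step by step, replacing each ``greedy'' or ``counting'' argument in the finite setting by its unimodular analog established in \cref{sec:interlacing,sec:local}. The quantities $\abs{U}/n$, $\abs{U_0}/n$, $\abs{W}/n$ in the finite proof all become densities $\PP(o \in \cdot)$, and the Cauchy interlacing inequality becomes the corollary to \cref{thm:interlacing}.

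\medskip

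\emph{Step 1: Control the ``locally large'' set.} Let $f$ be the $\{0,1\}$-valued local function whose value at a vertex $v$ is $1$ iff $\lambda_1(B_G(v,s+1)) > x$; this is $(s+1)$-local, hence measurable and independent of the root, so it defines a random set $U \subseteq V(G)$ with density $\PP(o\in U)$. Apply \cref{lem:unimod-sep} (with $r$ there equal to $2(s+2)$) to extract a $2(s+2)$-separated random $U_0 \subseteq U$ with $\PP(o \in U_0) \ge \Delta^{-2(s+2)}\PP(o\in U)$. Every $u \in U_0$ satisfies $\lambda_1(B_G(u,s+1)) > x$, hence $\lambda_1(B_G(u,s)) > x$ is not what we want directly — instead I will invoke \cref{lem:sep-large-rad} with the radius $s+1$ and separation $2(s+2) = 2((s+1)+1)$, which gives $\PP(o\in U_0) \le \mu_G(x,\infty) = \delta$. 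Combining, $\PP(o\in U) \le \Delta^{2(s+2)}\delta$.

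\medskip

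\emph{Step 2: Remove $U$ and the net $W$ and bound local spectral radii.} Set $H$ to be $G$ with all edges incident to $U \cup W$ removed (this is the operation in \cref{thm:interlacing}; note it is slightly different from vertex deletion but has the same effect on the relevant balls, since a vertex of $U\cup W$ becomes isolated). For $v \in V(H)$ that is itself outside $U$ — and all vertices we care about, in the sense that vertices of $U\cup W$ contribute $0$ to walk counts in $H$ — the same argument as in the finite proof shows the complement of $B_H(v,s)$ inside $B_G(v,s+1)$ is an $r$-net of $B_G(v,s+1)$, because $W$ is an $r$-net of $G$ and any vertex of $B_H(v,s)$ reaches $W$ within $r$ steps through a vertex at distance $s+1$ from $v$; moreover since $v \notin U$ we have $\lambda_1(B_G(v,s+1)) \le x$. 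So \cref{lem:rad-drop} gives $\lambda_1(B_H(v,s))^{2r} \le x^{2r} - w_{\min}^{2r}$ for every such $v$ (and trivially $\lambda_1(B_H(v,s)) = 0$ when $v \in U\cup W$).

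\medskip

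\emph{Step 3: The moment/trace bound and assembling.} The unimodular substitute for \cref{lem:local-global} is the identity $\int x^{2s}\,d\mu_H(x) = \EE\angs{1_o, A_H^{2s}1_o} = \EE\angs{1_o, A_{B_H(o,s)}^{2s}1_o} \le \EE[\lambda_1(B_H(o,s))^{2s}] \le (x^{2r}-w_{\min}^{2r})^{s/r}$, using Perron--Frobenius pointwise and Step 2. On the other hand $\int x^{2s}\,d\mu_H(x) \ge \mu_H[(1-\theta)x,x]\,(1-\theta)^{2s}x^{2s}$, so
\[
\mu_H[(1-\theta)x,x] \le (1-\theta)^{-2s}\paren{1 - \paren{\tfrac{w_{\min}}{x}}^{2r}}^{s/r}.
\]
Finally apply the corollary to \cref{thm:interlacing} with the random set $U \cup W$ (independent of $o$, with $\PP(o\in U\cup W) \le \PP(o\in U) + \epsilon_\net \le \Delta^{2(s+2)}\delta + \epsilon_\net$) to get $\mu_G[(1-\theta)x,x] \le \mu_H[(1-\theta)x,x] + 2(\Delta^{2(s+2)}\delta + \epsilon_\net)$, which is the claimed bound.

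\medskip

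The main obstacle is Step 1: in the finite proof one argues combinatorially that the $\abs{U_0}$ disjoint balls $B_G(u,s+1)$, each of spectral radius $>x$, force $\abs{U_0}$ eigenvalues above $x$ via block-diagonal interlacing. In the unimodular setting there is no eigenvalue counting, so this must be replaced by the dimension inequality machinery; \cref{lem:sep-large-rad} is exactly the tool that does this, and the only care needed is to match its hypotheses ($2(s'+1)$-separation with $\lambda_1(B_G(u,s')) > x$) to what \cref{lem:unimod-sep} produces — taking $s' = s+1$ and separation parameter $2(s+2)$ makes everything line up, and one should double-check that the local function defining $U$ and the label-based refinement producing $U_0$ are jointly a legitimate local (hence root-independent, measurable) construction so that \cref{thm:interlacing} applies.
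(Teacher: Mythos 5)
Your proposal is correct and follows essentially the same route as the paper's proof: the same $(s+1)$-local set $U$, the same application of \cref{lem:unimod-sep} and \cref{lem:sep-large-rad} with radius $s+1$ and separation $2(s+2)$, the same edge-removal graph $H$ with the net-removal radius drop (\cref{lem:rad-drop}), the moment bound of \cref{lem:unimod-local-global} (which you rederive inline), and finally \cref{thm:interlacing} applied to $U\cup W$. No gaps.
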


We need an extension of \cref{lem:local-global} to the infinite setting.

\begin{lemma}[Local-global spectral comparison] \label{lem:unimod-local-global}
	Let $(G,o)$ be an edge-weighted random rooted graph with bounded degrees and bounded nonnegative edge-weights.
	Let $r$ be a positive integer. Then
	\[
	\int y^{2r} \, d\mu_G(y)
	\le
	\EE \sqb{\lambda_1(B_G(o,r))^{2r}}
	\]	
\end{lemma}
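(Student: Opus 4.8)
The plan is to compare the $(2r)$-th moment of $\mu_G$, which by \eqref{eq:unimod-return} equals $\EE\angs{1_o, A_G^{2r} 1_o}$, against the expected Perron eigenvalue of the radius-$r$ ball at $o$, exactly as in the finite case (\cref{lem:local-global}) but using the expectation-of-return-walks characterization of the spectral measure instead of a trace. First I would observe that by \eqref{eq:unimod-return},
\[
\int y^{2r}\, d\mu_G(y) = \EE\angs{1_o, A_G^{2r} 1_o}.
\]
Now the key locality observation: a closed walk of length $2r$ starting and ending at $o$ never leaves $B_G(o,r)$, so $\angs{1_o, A_G^{2r} 1_o} = \angs{1_o, A_{B_G(o,r)}^{2r} 1_o}$, where on the right $A_{B_G(o,r)}$ denotes the (finite) adjacency operator of the ball, and $1_o$ is viewed inside $\ell^2(V(B_G(o,r)))$. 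Then by the Perron--Frobenius theorem (or the Rayleigh quotient bound together with nonnegativity of entries, as in the proof of \cref{lem:local-global}), $\angs{1_o, A_{B_G(o,r)}^{2r} 1_o} \le \lambda_1(B_G(o,r))^{2r}$. Taking expectations over the random rooted graph $(G,o)$ yields the claim.

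The only subtlety worth spelling out is measurability and finiteness: for a bounded-degree graph with bounded edge weights, $\angs{1_o, A_G^{2r}1_o}$ is a bounded (by $(\Delta w_{\max})^{2r}$) Borel-measurable function of $(G,o)$ since it depends only on $B_G(o,r)$, so the expectation is well-defined and finite; the same holds for $\lambda_1(B_G(o,r))^{2r}$. Note this lemma does not require unimodularity — only that $(G,o)$ is a random rooted graph — which is consistent with the statement.

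I do not expect any genuine obstacle here: this is the direct analogue of \cref{lem:local-global}, with the sum over vertices of a finite graph replaced by an expectation over the root, and the trace identity replaced by the moment formula \eqref{eq:unimod-return}. The one place to be slightly careful is identifying $A_G^{2r}1_o$ restricted to the ball with $A_{B_G(o,r)}^{2r}1_o$ — this is immediate because any walk contributing to the $(o,v)$ entry of $A_G^{2r}$ with $v$ within distance $r$ stays inside $B_G(o,r)$, so the two operators agree on the relevant finite-dimensional computation.
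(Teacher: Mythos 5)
Your proposal is correct and matches the paper's proof essentially verbatim: both apply \cref{eq:unimod-return}, identify $\angs{1_o, A_G^{2r} 1_o}$ with $\angs{1_o, A_{B_G(o,r)}^{2r} 1_o}$ by the locality of length-$2r$ closed walks, bound this by $\lambda_1(B_G(o,r))^{2r}$ via Perron--Frobenius, and take expectations. The added remarks on measurability and on not needing unimodularity are accurate but not needed beyond what the paper records.
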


\begin{proof}
	As in the proof of \cref{lem:local-global}, we have
	\[
		\int y^{2r} \, d\mu_G(y)
		= \EE \angs{1_o, A_G^{2r} 1_o}
		= \EE \angs{1_o, A_{B_G(o, r)}^{2r} 1_o}
		\le \EE \sqb{\lambda_1(B_G(o, r))^{2r}}. \qedhere 
	\]
\end{proof}

\begin{proof}[Proof of \cref{prop:inf-param}]
Let
\[
U = \set{ v \in V(G) : \lambda_1 (B_G(v, s+1)) > x}.
\]
Note that $U$ is $(s+1)$-local. 
By \cref{lem:unimod-sep}, we can find a local random $2(s+2)$-separated $U_0 \subseteq U$ with $\PP(o \in U)
 \le \Delta^{2(s+2)} \PP(o \in U_0)$.
 By \cref{lem:sep-large-rad}, $\PP(o \in U_0) \le \mu_G (x,\infty) = \delta$.  So
\[
\PP(o \in U) \le \delta\Delta^{2(s+2)}.
\]
We are given that there is a random $r$-net $W \subseteq V(G)$ that is independent of $o$ and has density
\[
\PP(o \in W) \le \epsilon_\net.
\]
Let $H$ be obtained from $G$ by removing all edges incident to $U \cup W$.
For each $v \in V(G)$, the vertex complement of $B_H(v,s)$ in $B_G(v,s+1)$ forms an $r$-net of $B_G(v, s+1)$ (note that these balls are all finite).
Thus by \cref{lem:rad-drop}, for all $v \in V(G) \setminus U$,
\[
\lambda_1(B_H(v, s))^{2r}
\le
\lambda_1(B_G(v, s+1))^{2r} - w_{\min}^{2r}
\le x^{2r} - w_{\min}^{2r}.
\]
Also, if $v \in U$, then $v$ is an isolated vertex in $H$, and thus $\lambda_1(B_H(v, s))^{2r} = 0$.
Applying \cref{lem:unimod-local-global},
\[
\mu_H[(1 - \theta)x, x] (1-\theta)^{2s} x^{2s} 
\le
\int y^{2s} \, d\mu_H (y)
\le
\EE \sqb{\lambda_1( B_H(o, s))^{2s}}
\le 
(x^{2r} - w_{\min}^{2r})^{s/r}.
\]
Thus
\[
\mu_H[(1 - \theta)x, x] 
\le 
(1-\theta)^{-2s} \paren{1 - \paren{\frac{w_{\min}}{x}}^{2r}}^{s/r}
\]
By interlacing, \cref{thm:interlacing},
\begin{align*}
\mu_G[(1 - \theta)x, x]
&\le 
\mu_H[(1 - \theta)x, x] + 2 \PP(o \in U \cup W)
\\
&
\le
(1-\theta)^{-2s} \paren{1 - \paren{\frac{w_{\min}}{x}}^{2r}}^{s/r}
 + 2\delta \Delta^{2(s+2)}
 + 2 \epsilon_\net. \qedhere 
\end{align*}
\end{proof}

We have now finished proving all the necessary ingredients for extending the finite graph proof from \cref{sec:finite} to infinite unimodular random graphs.
This completes our proof of \cref{thm:unimod-main,thm:unimod-expander}
which are now identical to the proof of 
\cref{thm:finite-main,thm:finite-expander}
except that we apply \cref{prop:inf-param} instead of \cref{prop:finite-param},
and the infinite versions of the net selection lemmas (\cref{lem:unimod-net,lem:unimod-net-expander}) 
instead of the finite ones (\cref{lem:net,lem:net-expander}).

\section{Infinite regular expanders}\label{sec:non amenable}

Let us now prove \cref{thm:RegularExp}. For infinite regular expanders, we can skip the local-global step (\cref{lem:local-global,lem:unimod-local-global}) of the proof to obtain better bounds.

Here is an extension of \cref{lem:rad-drop} (net removal lowers spectral radius) to infinite graphs.

\begin{lemma} \label{lem:rad-drop-inf}
Let $(G,o)$ be a $d$-regular unimodular random graph with spectral norm $\rho$. Let $W$ be a random $r$-net of $G$ (independent of $o$).
Let $H$ be obtained by from $G$ by removing $W$. Then the spectral radius of $H$ is at most $(\rho^{2r} - 1)^{1/(2r)}$.
\end{lemma}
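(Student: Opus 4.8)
The plan is to carry out the proof of \cref{lem:rad-drop} at the level of the (now infinite) adjacency matrices, obtaining the entrywise inequality $A_H^{2r}\le A_G^{2r}-I$ (all edge weights here are $1$, so $w_{\min}=1$), and then, in place of the finite-dimensional Perron--Frobenius step used there, to pass from this entrywise inequality to an inequality of \emph{operator} norms on $\ell^2(V(G))$. The statement is pathwise --- for each realisation of the pair $(G,W)$ one wants $\norm{A_H}\le(\rho^{2r}-1)^{1/(2r)}$ --- so that $W$ is random and independent of $o$ is irrelevant here and is recorded only for later use. We may assume $d\ge1$, so that $G$ has no isolated vertices.

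First I would realise $A_H$ as an operator on $\ell^2(V(G))$ by zeroing out the coordinates in $W$: letting $P$ be the orthogonal projection of $\ell^2(V(G))$ onto $\ell^2(V(G)\setminus W)$, the operator $PA_GP$ agrees with $A_H$ on $\ell^2(V(G)\setminus W)$ and kills $\ell^2(W)$, so $\norm{A_H}=\norm{PA_GP}$, and by self-adjointness $\norm{A_H}^{2r}=\norm{(PA_GP)^{2r}}$. The key claim is the entrywise bound
\[
0 \;\le\; (PA_GP)^{2r} \;\le\; A_G^{2r}-I \qquad \text{entrywise.}
\]
Since $0\le PA_GP\le A_G$ entrywise with nonnegative entries, iterating the entrywise product inequality for nonnegative matrices gives $0\le(PA_GP)^{2r}\le A_G^{2r}$, which already settles the off-diagonal entries. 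For a diagonal entry $(v,v)$: $(A_G^{2r})_{vv}$ counts closed length-$2r$ walks from $v$ in $G$, whereas $(PA_GP)^{2r}_{vv}$ counts only those avoiding $W$; since $W$ is an $r$-net there is $w\in W$ with $d_G(v,w)\le r$, and the closed walk going from $v$ to $w$ along a geodesic and back, padded by traversing an edge at $v$ back and forth to length exactly $2r$ (possible as $d\ge1$), meets $W$ and is therefore counted by $A_G^{2r}$ but not by $(PA_GP)^{2r}$ --- exactly as in \cref{lem:rad-drop}. Hence $(A_G^{2r}-(PA_GP)^{2r})_{vv}\ge1$ for every $v$ (when $v\in W$ this is trivial since $(PA_GP)^{2r}_{vv}=0$), proving the claim.

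To conclude I would invoke that entrywise domination of nonnegative operators is inherited by operator norms: if $0\le B\le C$ entrywise with $C$ bounded on $\ell^2$, then for finitely supported $\phi,\psi$,
\[
\abs{\angs{\phi,B\psi}}\le\angs{\abs\phi,B\abs\psi}\le\angs{\abs\phi,C\abs\psi}\le\norm{C}\,\norm{\phi}\,\norm{\psi},
\]
so $\norm{B}\le\norm{C}$. Applying this with $B=(PA_GP)^{2r}$ and $C=A_G^{2r}-I$, together with the functional calculus for the self-adjoint operator $A_G$, whose spectrum lies in $[-\rho,\rho]$, gives
\[
\norm{A_H}^{2r}=\norm{(PA_GP)^{2r}}\le\norm{A_G^{2r}-I}=\sup_{t\in\mathrm{spec}(A_G)}\abs{t^{2r}-1}\le\max(\rho^{2r}-1,\,1).
\]
The last expression equals $\rho^{2r}-1$ once $\rho^{2r}\ge2$: this holds whenever $d\ge2$, since an infinite $d$-regular graph has $\rho\ge2\sqrt{d-1}$, while for $d\le1$ any removal of an $r$-net with $r\ge1$ leaves $H$ edgeless so the bound is trivial. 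Taking $2r$-th roots then yields $\norm{A_H}\le(\rho^{2r}-1)^{1/(2r)}$.

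The one place where a little genuine care is needed, beyond what is in the finite \cref{lem:rad-drop}, is this last passage from an entrywise matrix inequality to an operator-norm inequality on the infinite-dimensional space $\ell^2(V(G))$: one cannot instead argue through quadratic forms, since entrywise domination does not imply the corresponding positive-semidefinite ordering (as the finite example of a path on three vertices already shows), and it is precisely the nonnegativity of all the matrix entries that makes the norm comparison go through.
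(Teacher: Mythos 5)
Your proof is correct and takes essentially the same route as the paper's: establish the entrywise inequality $(PA_GP)^{2r}\le A_G^{2r}-I$ exactly as in \cref{lem:rad-drop} and then pass to operator norms using that entrywise domination between operators with nonnegative entries is inherited by the norm. You also carefully fill in a point the paper's two-line proof glosses over, namely that $\snorm{A_G^{2r}-I}=\sup_{t\in\mathrm{spec}(A_G)}\abss{t^{2r}-1}$ equals $\rho^{2r}-1$ only once $\rho^{2r}\ge 2$, which you correctly dispose of via $\rho\ge 2\sqrt{d-1}$ for $d\ge 2$ and a trivial degenerate case otherwise.
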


\begin{proof}
    As in the proof of \cref{lem:rad-drop}, we have that $A_H^{2r} \le A^{2r}_G - I$ entrywise as operators. Note that both sides have nonnegative entries. The consequence then follows from taking spectral norm on both sides.
\end{proof}

\begin{lemma} \label{lem:expander-net-rem-rad}
Let $(G,o)$ be an infinite $d$-regular unimodular random graph with spectral norm $\rho$. Suppose there exists an $r$-net of density $\delta > 0$. Let $\theta = 1 - (1-\rho^{-2r})^{1/(2r)}$. Then
\[
\mu_G[(1-\theta)\rho, \rho] \le \delta.
\]
\end{lemma}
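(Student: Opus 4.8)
The plan is to combine the interlacing theorem (\cref{thm:interlacing}) with the net-removal estimate (\cref{lem:rad-drop-inf}) in a direct way, skipping the local-global comparison entirely. First I would let $W$ be a random $r$-net of density $\PP(o \in W) = \delta$ (here it is important that $W$ is independent of $o$, which is what "net of density $\delta$" should mean in this context), and let $H = G - W$ be the graph obtained by removing the vertices of $W$, equivalently removing all edges incident to $W$ and treating the vertices of $W$ as isolated. By \cref{lem:rad-drop-inf}, the spectral radius of $H$ is at most $(\rho^{2r} - 1)^{1/(2r)} = \rho(1-\rho^{-2r})^{1/(2r)} = (1-\theta)\rho$, using the definition of $\theta$.

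Next I would observe that since $\norm{A_H} \le (1-\theta)\rho < \rho$, the spectral measure $\mu_H$ is supported in $[-(1-\theta)\rho, (1-\theta)\rho]$; in particular $\mu_H\big((1-\theta)\rho, \rho\big] = 0$. Actually one must be slightly careful about the endpoint: $\mu_H$ could place mass exactly at $(1-\theta)\rho$, but $\mu_H$ assigns zero mass to the half-open interval $\big((1-\theta)\rho, \rho\big]$ since everything above $(1-\theta)\rho$ is outside the spectrum of $A_H$. Then I apply the corollary to \cref{thm:interlacing} (or \cref{thm:interlacing} directly via two one-sided bounds): $\abs{\mu_G[(1-\theta)\rho, \rho] - \mu_H[(1-\theta)\rho,\rho]} \le 2\PP(o \in W) = 2\delta$. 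This would give $\mu_G[(1-\theta)\rho,\rho] \le 2\delta$, which is off by a factor of $2$ from the claimed bound.

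To remove the factor of $2$, I would use the one-sided form of interlacing more carefully. From \cref{thm:interlacing} applied at the point $(1-\theta)\rho$ (taking the half-line $\big((1-\theta)\rho, \infty\big)$), we get $\mu_G\big((1-\theta)\rho,\infty\big) \le \mu_H\big((1-\theta)\rho,\infty\big) + \PP(o\in W)$. Now $\mu_H\big((1-\theta)\rho,\infty\big) = 0$ because $(1-\theta)\rho \ge \norm{A_H}$ and the spectrum of $A_H$ lies in $[-\norm{A_H}, \norm{A_H}]$, so this open half-line misses the spectrum. Hence $\mu_G\big((1-\theta)\rho,\infty\big) \le \delta$. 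Finally $\mu_G[(1-\theta)\rho, \rho] \le \mu_G\big((1-\theta)\rho, \infty\big) + \mu_G(\{(1-\theta)\rho\})$; to control the atom I would shrink $\theta$ infinitesimally, or more cleanly, note that $\mu_G[(1-\theta)\rho,\rho] \le \mu_G((1-\theta')\rho, \infty) \le \delta$ for any $\theta' < \theta$ obtained from a slightly larger net radius — but the cleanest route is to simply re-examine which half-open interval the interlacing inequality is applied to. Using $\mu_G[(1-\theta)\rho, \infty) \le \mu_H[(1-\theta)\rho,\infty) + \PP(o \in W)$ and observing $\mu_H[(1-\theta)\rho, \infty) = \mu_H(\{(1-\theta)\rho\})$, which could be nonzero, suggests the factor-$2$ loss is real unless one argues $\rho \notin \mathrm{spec}(A_H)$ strictly. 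The main obstacle is exactly this endpoint bookkeeping: deciding whether the clean statement $\mu_G[(1-\theta)\rho,\rho]\le\delta$ needs $\norm{A_H} < (1-\theta)\rho$ strictly (which fails when $(1-\theta)\rho$ is attained) or whether a limiting argument over nets of radius $r' \to \infty$, or a direct spectral-projection argument showing $X^H_{((1-\theta)\rho,\infty)} = 0$ combined with $X^H_{[(1-\theta)\rho,\infty)}$ being controlled, suffices. I expect the resolution is that $\mu_H\big((1-\theta)\rho,\infty\big)=0$ exactly (open interval, strict inequality not needed), and \cref{thm:interlacing} at the threshold $(1-\theta)\rho$ gives $\mu_G\big((1-\theta)\rho,\infty\big)\le\delta$; the half-open interval $[(1-\theta)\rho,\rho]$ in the statement should then be read together with the hypothesis so that the atom at $(1-\theta)\rho$ is absorbed, or the statement tolerates it because in all applications $\theta$ will be chosen with strict slack.
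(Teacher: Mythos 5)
Your argument is exactly the paper's: remove the net $W$, apply \cref{lem:rad-drop-inf} to get $\norm{A_H}\le(\rho^{2r}-1)^{1/(2r)}=(1-\theta)\rho$, and conclude by the one-sided form of \cref{thm:interlacing}, which, as you correctly work out, gives $\mu_G\big((1-\theta)\rho,\infty\big)\le\mu_H\big((1-\theta)\rho,\infty\big)+\PP(o\in W)=\delta$ with no factor of $2$. The endpoint bookkeeping you agonize over at the end is not addressed in the paper's two-line proof either, and your suspected resolution is the right one: the argument cleanly yields the half-open bound $\mu_G\big((1-\theta)\rho,\rho\big]\le\delta$, and in the only application (the proof of \cref{thm:RegularExp}) the radius $r$ is chosen with strict slack, so a possible atom of $\mu_H$ at exactly $(1-\theta)\rho$ is harmless.
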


\begin{proof}
Let $H$ be $G$ with the net removed. By \cref{lem:rad-drop-inf}, $\norm{H}^{2r} \le \norm{G}^{2r} - 1 = \rho^{2r} - 1$. And so $\norm{H} \le (1-\theta) \rho$. The result then follows from the spectral interlacing (\cref{thm:interlacing}).
\end{proof}

\begin{lemma}
   Every infinite $d$-regular unimodular random graph with spectral radius $\rho$ is an $c$-expander with $1 + c = d^2/\rho^2$.
\end{lemma}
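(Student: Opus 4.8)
The plan is to relate the spectral radius $\rho$ of the $d$-regular graph to a vertex-expansion estimate via the operator $A_G$ acting on $\ell^2$ with the help of the unimodular framework, in particular the mass transport principle. The clean way to do this is to argue by contradiction: suppose some finite nonempty $S \subseteq V(G)$ has $\abss{\partial S} < c\abss{S}$ where $\partial S$ is the outer vertex boundary, i.e. $\abss{N(S)\setminus S} < (d^2/\rho^2 - 1)\abss S$, and derive that $\norm{A_G} > \rho$. Actually, since the graph may be an infinite random graph and $S$ may occur only with some positive probability in an arbitrarily large window, the cleanest route is to pass through a moment/Rayleigh-quotient bound rather than a single test function. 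I would consider the indicator $\psi = 1_S$ for a random finite set $S$ produced by a local rule and estimate $\EE\ang{1_S, A_G^2 1_S}$ against $\EE\ang{1_S, 1_S} = \EE\abss S$ using $\norm{A_G}^2 \ge \ang{\psi, A_G^2\psi}/\ang{\psi,\psi}$.

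The key steps, in order: First, note $\ang{1_S, A_G^2 1_S} = \sum_{u\in S}\abss{N(u)\cap S}$ counts, for each $u \in S$, the number of length-two closed-ish walks $u \to w \to v$ with $v \in S$; more precisely it equals $\#\{(u,v,w): u,v\in S,\ w\sim u,\ w\sim v\}$, i.e. $\sum_{w\in V(G)} \abss{N(w)\cap S}^2$. Second, by $d$-regularity $\sum_w \abss{N(w)\cap S} = d\abss S$, and the sum is supported on $w \in S \cup N(S)$, a set of size at most $\abss S + \abss{\partial S}$. By Cauchy–Schwarz, $\sum_w \abss{N(w)\cap S}^2 \ge (d\abss S)^2 / (\abss S + \abss{\partial S})$. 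Third, therefore $\rho^2 \ge \norm{A_G}^2 \ge \ang{1_S,A_G^2 1_S}/\abss S \ge d^2\abss S/(\abss S + \abss{\partial S})$, which rearranges to $\abss{\partial S} \ge (d^2/\rho^2 - 1)\abss S = c\abss S$. Fourth, promote this from a single finite $S$ to the almost-sure statement in the definition of a $c$-expander unimodular random graph: the above inequality is purely combinatorial and holds for every finite nonempty $S \subseteq V(G)$ deterministically once $\norm{A_G} \le \rho$ holds (which is the hypothesis), so there is nothing probabilistic to promote — the bound $\norm{A_G}^2 \ge \ang{1_S, A_G^2 1_S}/\ang{1_S,1_S}$ is valid for any fixed finite $S$ in any graph.

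The main obstacle I anticipate is purely bookkeeping: getting the boundary notion to match the paper's definition (they use $N(S)\setminus S$, the outer boundary, so $\abss{S}+\abss{\partial S} = \abss{N[S]}$), and being careful that the Rayleigh quotient bound $\norm{A_G}^2 \ge \ang{\psi,A_G^2\psi}/\ang{\psi,\psi}$ is applied with $\psi = 1_S \in \ell^2(V(G))$ — legitimate since $S$ is finite — and that $\ang{\psi, A_G^2\psi} = \norm{A_G\psi}^2 = \sum_{w}\big(\sum_{u\in S} w(w,u)\big)^2 = \sum_w \abss{N(w)\cap S}^2$ in the unit-weight $d$-regular case. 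One subtlety: if $S = \emptyset$ the inequality is vacuous, and if $d^2/\rho^2 - 1 \le 0$ (amenable case, $\rho = d$) the statement is trivially true with $c \le 0$; but the lemma is meant for $\rho < d$ where $c = d^2/\rho^2 - 1 > 0$, and one should note $c$ could exceed $1$, in which case the conclusion is simply that $G$ is a $1$-expander (the definition with constant $c$ is monotone: a $c$-expander is a $c'$-expander for $c' \le c$). I would state the cleaner bound $\abss{N(S)\setminus S} \ge (d^2/\rho^2 - 1)\abss S$ and then remark that this gives $c$-expansion with $1 + c = d^2/\rho^2$ (capping at $2$ if desired).
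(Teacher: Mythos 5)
Your proof is correct and is essentially the paper's argument: both apply Cauchy--Schwarz to $A_G 1_S$ (whose $\ell^1$-mass is $d\abs{S}$ and whose support has size at most $\abs{S}+\abs{N(S)\setminus S}$) together with $\norm{A_G 1_S}_2^2\le \rho^2\abs{S}$ to get $\abs{S}+\abs{N(S)\setminus S}\ge (d/\rho)^2\abs{S}$. Your observation that the estimate is deterministic for each finite $S$ once $\norm{A_G}\le\rho$ holds almost surely, so no mass-transport step is needed, matches the paper's (very terse) treatment.
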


\begin{proof}
For any finite subset $S$ of vertices, by the Cauchy--Schwarz inequality, almost surely,
\[
\abs{N(S)}^{-1} \le \norm{ \frac{A_G 1_S}{d\abs{S}}}_2^2 \le \rho^2 \norm{ \frac{1_S}{d\abs{S}}}_2^2 = \frac{\rho^2}{d^2 \abs{S}}.
\]
So $\abs{N(S)} \ge (d/\rho)^2\abs{S}$.
Thus $G$ is an $c$-expander.
\end{proof}

\begin{proof}[Proof of \cref{thm:RegularExp}]
Throughout, we shall assume that $\theta > 0$ is sufficiently small (i.e., $\theta$ less than some constant depending on $d$ and $\epsilon$). Let 
\[
r = \ceil{(1-\varepsilon) \frac{\log(1/\theta)}{2\log \rho}}.
\]
Then
\[
(1-\rho^{-2r})^{1/(2r)}
\le \exp\paren{\frac{-\rho^{-2r}}{2r}} \le \exp(-\theta^{1-\varepsilon/2}) \le 1- \theta.
\]
Setting
\[
p = \theta^{(1-2\varepsilon) \paren{\frac{\log d}{\log \rho} - 1}},
\]
we find that
\[
(1-p)^{(d/\rho)^{2r}}
\le \exp\paren{ - p \paren{\frac{d}{\rho}}^{2r} }
\le \exp\paren{ - \theta^{-\varepsilon \paren{\frac{\log d}{\log \rho} - 1}}} \le p.
\]
By \cref{lem:unimod-net-expander}, there exists an $r$-net of density at most $2p$. By \cref{lem:expander-net-rem-rad}, one has $\mu_G[(1-\theta) \rho, \rho] \le 2p$, which implies the desired result (after changing $\epsilon$ appropriately).
\end{proof}

\subsection*{Connections to return times of random walks}
Let us relate the non-concentration at near the top of the spectrum of an infinite regular unimodular graph to estimates on return times. Consider an infinite $d$-regular unimodular random graph $(G,o)$ with spectral radius $\rho$. Let $p_k$ be the probability that a simple random walk starting at $o$ returns to $o$ after $k$ steps. Recall from \cref{eq:unimod-return} that
\[
d^k p_k = \int t^k d\mu_G(t).
\]
We have 
\[
\lim_{k \to \infty} p_{2k}^{1/(2k)} = (\rho/d)^2.
\]
It turns that that the rate at which $p_{2k} (d/\rho)^{2k}$ decays with $k$ is closely related to the concentration of the spectrum of $G$ near $\rho$.

\begin{proposition}\label{prop:return-decay}
Let $(G,o)$ be a $d$-regular unimodular random graph with spectral radius $\rho$. Let $p_k$ be the probability that a simple random walk starting at $o$ returns to $o$ after $k$ steps.
\begin{enumerate}
    \item [(a)] If there are constants $C, \alpha, \rho > 0$ such that 
    \[
    p_{2n} \le C (\rho/d)^{2n} n^{-\alpha} \quad \text{ for all }n,
    \]
    then
    \[
    \mu_G[(1-\theta)\rho, \rho] \lesssim_{\alpha} C \theta^\alpha \quad \text{ for all $0 < \theta < 1/2$}.
    \]
    \item [(b)] If there are constants $C, \alpha > 0$ such that
    \[
    \mu_G[(1-\theta)\rho, \rho] \lesssim C \theta^\alpha \quad \text{ for all $0 < \theta < 1/2$},
    \]
    then for every $\epsilon > 0$, there is some constant $C'$ (which is allowed to depend on $(G,o)$, $C$, $\alpha$, $\epsilon$) such that
    \[
    p_{2n} \le C' (\rho/d)^{2n} n^{-\alpha + \epsilon} \quad \text{ for all }n.
    \]
\end{enumerate}
\end{proposition}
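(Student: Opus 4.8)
The plan is to exploit the identity $d^{2n} p_{2n} = \int t^{2n}\,d\mu_G(t)$ together with the fact that $\mu_G$ is supported on $[-\rho,\rho]$, translating between the polynomial decay of the scaled return probabilities $p_{2n}(d/\rho)^{2n}$ and the polynomial decay of the tail mass $\mu_G[(1-\theta)\rho,\rho]$. Both directions are standard Tauberian-type arguments, so the main work is bookkeeping; I expect the reverse direction (b) to be the more delicate one because of the inevitable $\epsilon$-loss.

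For part (a): assume $p_{2n}\le C(\rho/d)^{2n} n^{-\alpha}$. Fix $0<\theta<1/2$ and set $n\asymp 1/\theta$ (specifically $n=\lceil 1/\theta\rceil$). On the interval $[(1-\theta)\rho,\rho]$ we have $t^{2n}\ge (1-\theta)^{2n}\rho^{2n}\gtrsim \rho^{2n}$ since $(1-\theta)^{2n}\ge (1-\theta)^{2/\theta+2}$ is bounded below by an absolute constant. Also every term in $\int t^{2n}d\mu_G$ is nonnegative because $2n$ is even, so
\[
\mu_G[(1-\theta)\rho,\rho]\cdot (1-\theta)^{2n}\rho^{2n}
\le \int_{[(1-\theta)\rho,\rho]} t^{2n}\,d\mu_G(t)
\le \int t^{2n}\,d\mu_G(t)
= d^{2n}p_{2n}
\le C\rho^{2n} n^{-\alpha}.
\]
Dividing through by $(1-\theta)^{2n}\rho^{2n}\gtrsim \rho^{2n}$ and using $n^{-\alpha}\asymp_\alpha \theta^\alpha$ gives $\mu_G[(1-\theta)\rho,\rho]\lesssim_\alpha C\theta^\alpha$, as claimed.

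For part (b): assume $\mu_G[(1-\theta)\rho,\rho]\lesssim C\theta^\alpha$. Split $\int t^{2n}\,d\mu_G(t)$ according to $|t|$. The contribution from $|t|\le (1-\theta)\rho$ is at most $(1-\theta)^{2n}\rho^{2n}\le e^{-2n\theta}\rho^{2n}$; the contribution from $|t|\in[(1-\theta)\rho,\rho]$ (using the tail at both ends of the spectrum, which also satisfies the hypothesis after reflecting, since $\mu_G$ restricted near $-\rho$ contributes to the even moment identically) is at most $\rho^{2n}\,\mu_G\big(\{|t|\ge(1-\theta)\rho\}\big)\lesssim C\rho^{2n}\theta^\alpha$. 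The first term should be made negligible by choosing $\theta$ slightly larger than $1/n$, say $\theta = (K\log n)/n$ for a large constant $K$, so that $e^{-2n\theta}=n^{-2K}\le n^{-\alpha}$. This yields
\[
d^{2n}p_{2n}=\int t^{2n}\,d\mu_G(t)\lesssim \rho^{2n}\Big(n^{-\alpha}+C\big(\tfrac{K\log n}{n}\big)^\alpha\Big)\lesssim_{C,\alpha,K} \rho^{2n}\,\frac{(\log n)^\alpha}{n^\alpha},
\]
and absorbing $(\log n)^\alpha\le C'' n^\epsilon$ gives $p_{2n}\le C'(\rho/d)^{2n}n^{-\alpha+\epsilon}$. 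The point where care is needed—and the only genuine obstacle—is handling the mass near $-\rho$: one must note that $\mu_G$ near $-\rho$ arises from $\mu_{G'}$ near $+\rho$ for a sign-twisted operator, or more simply observe that $\int t^{2n}d\mu_G$ only sees $|t|$ and that the hypothesis on the top of the spectrum, combined with $\|A_G\|=\rho$, bounds the total mass at distance $\le\theta\rho$ from $\pm\rho$ by $O(C\theta^\alpha)$ after an application of the spectral-interlacing/non-concentration machinery or, if one prefers a clean statement, by simply assuming (as is the case in all applications, e.g. Theorem~\ref{thm:RegularExp} and Theorem~\ref{thm:decay on return probs intro}) the analogous bound near $-\rho$; since for bipartite-type issues the spectrum is symmetric and otherwise the bottom is strictly above $-\rho$, this costs nothing. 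The $\epsilon$-loss is intrinsic to converting the logarithmic factor produced by the $\theta\sim (\log n)/n$ choice into a clean power, which is why (b) is stated with an $\epsilon$.
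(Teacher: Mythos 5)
Part (a) of your argument is correct and is essentially the paper's proof: take $n\asymp 1/\theta$, use that $t^{2n}\ge 0$ so the full even moment dominates the contribution of $[(1-\theta)\rho,\rho]$, and note $(1-\theta)^{2n}$ is bounded below by an absolute constant. The choice $\theta=(K\log n)/n$ versus the paper's $\theta=n^{-1+\epsilon/\alpha}$ in part (b) is a cosmetic difference; both yield the $n^{-\alpha+\epsilon}$ bound for the positive part of the spectrum.

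However, in part (b) the step you yourself flag as ``the only genuine obstacle''---the mass of $\mu_G$ near $-\rho$---is not actually resolved, and the options you sketch do not work. The hypothesis only controls $\mu_G[(1-\theta)\rho,\rho]$, not $\mu_G[-\rho,-(1-\theta)\rho]$, so you may not write $\mu_G(\{|t|\ge(1-\theta)\rho\})\lesssim C\theta^\alpha$. Invoking ``the spectral-interlacing/non-concentration machinery'' at the bottom of the spectrum is not available: the paper explicitly remarks (citing \cite[Example 8.3]{JTYZZ2}) that an atom at the bottom of the spectrum is possible, so no non-concentration theorem holds there. ``Simply assuming the analogous bound near $-\rho$'' changes the statement being proved. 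And the dichotomy ``either the spectrum is symmetric or the bottom is strictly above $-\rho$'' is asserted without proof and is not the route the paper takes; even granting it, you would still need to prove it for general unimodular random graphs.

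The paper's fix is elementary and worth recording. One splits on whether $p_k=0$ for all odd $k$. If so, then $\int(-t)^k\,d\mu_G(t)=\int t^k\,d\mu_G(t)$ for every $k$ (trivially for even $k$; both sides vanish for odd $k$), so by Stone--Weierstrass and Riesz representation $\mu_G$ is symmetric about $0$, and hence $d^{2n}p_{2n}=\int_{-\rho}^{\rho}t^{2n}\,d\mu_G(t)=2\int_{0}^{\rho}t^{2n}\,d\mu_G(t)$, to which your splitting argument applies. If instead some odd $k$ has $p_k>0$, one uses the Markov-property inequality $p_{2n+k}\ge p_{2n}p_k$ to get
\[
p_{2n}\le p_k^{-1}p_{2n+k}=d^{-2n-k}p_k^{-1}\int_{-\rho}^{\rho}t^{2n+k}\,d\mu_G(t)\le d^{-2n}p_k^{-1}\rho^{k}d^{-k}\cdot\frac{1}{\rho^k}\int_{0}^{\rho}t^{2n+k}\,d\mu_G(t)\le 2\,d^{-2n}p_k^{-1}\int_{0}^{\rho}t^{2n}\,d\mu_G(t),
\]
where the point is that the odd moment receives a nonpositive contribution from $t<0$, so the negative part of the spectrum is discarded for free. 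With this dichotomy in place, the rest of your part (b) goes through. Without it, the proof is incomplete.
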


\begin{proof}
(a)
Let $n = \ceil{1/(2\theta)}$. We have
\[
\mu_G[(1-\theta)\rho, \rho] (1-\theta)^{2n} \rho^{2n} 
\le \int t^{2n} d\mu_G(t) 
= d^{2n} p_{2n}
\le C \rho^{2n} n^{-\alpha}.
\]
Thus
\[
\mu_G[(1-\theta)\rho, \rho] \le C (1-\theta)^{-2n} n^{-\alpha} \lesssim C n^{-\alpha} \lesssim_{\alpha} C \theta^{-\alpha}.
\]

(b) 
Let $\theta = n^{-1 + \epsilon / \alpha}$. We have
\begin{align*}
\int_0^\rho t^{2n} d \mu_G(t) 
&\le (1-\theta)^{2n} \rho^{2n} + \rho^{2n} \mu_G[(1-\theta)\rho, \rho]
\\&\lesssim_C  \rho^{2n} \paren{(1-\theta)^{2n} + \theta^\alpha}
\\& \lesssim_{C, \alpha, \epsilon} \rho^{2n} n^{-\alpha + \epsilon}.
\end{align*}
If $p_n = 0$ for all odd $n$, then
\[
\int (-t)^n d \mu_G(t) = \int t^n d \mu_G(t) = d^n p_n
\]
for all nonnegative integers $n$ (noting that when $n$ is odd, both sides are equal to zero as $p_n = 0$). Thus, by the Stone--Weierstrass and Riesz representation theorems and the fact that $\mu_G$ is supported on $[-\rho, \rho]$, we see that $\mu_G$ is symmetric about zero. And so
\[
d^{2n} p_{2n} 
= \int_{-\rho}^\rho t^{2n} d \mu_G(t) 
= 2\int_{0}^\rho t^{2n} d \mu_G(t) 
\lesssim_{C, \alpha, \epsilon} \rho^{2n} n^{-\alpha + \epsilon}.
\]
On the other hand, if there is some odd $k$ with $p_k > 0$, then for any positive integer $n$ we have
\[
p_{2n} 
\le p_k^{-1} p_{2n+k} 
= d^{-2n-k} p_k^{-1} \int_{-\rho}^{\rho} t^{2n+k}d\mu_G(t) 
\le 2 d^{-2n} p_k^{-1} \int_0^{\rho} t^{2n}d\mu_G(t),
\]
and we can upper bound the final integral by $O_{C, \alpha, \epsilon}(\rho^{2n} n^{-\alpha + \epsilon})$ as earlier.
\end{proof}

\subsection*{Connnections to the rapid decay property}

We can show that in the case of spectral nonconcentration for adjacency operators on Cayley graphs, our results are optimal. For this, we need an auxiliary notion.
For a group $\Gamma$, let $\rho\colon \Gamma\to \mathcal{U}(\ell^{2}(\Gamma))$ be the left regular representation given by
\[(\lambda(g)\xi)(h)=\xi(g^{-1}h) \textnormal{ for all $g,h\in \Gamma,\xi\in\ell^{2}(\Gamma)$}.\]
\begin{definition}\label{defn: RDP}
Let $\Gamma$ be a finitely generated group with finite generating set $S$. We let $|g|_{S}$ be the word length of an element $g\in \Gamma$. We say $\Gamma$ has the \emph{rapid decay property} if there are constants $C,\alpha>0$ so that 
\[\left\|\sum_{g\in \Gamma}f(g)\lambda(g)\right\|\leq C\left(\sum_{g\in\Gamma}|f(g)|^{2}(|g|_{S}^{2}+1)^{2\alpha}\right)^{1/2}\]
for all finitely supported $f\colon \Gamma\to \CC$.
\end{definition}

A good example for intuition is the example $\Gamma=\ZZ^{d}$. In this case, by Parseval's theorem, the left-hand side of the above inequality is the supreumum norm of the function  $(S^{1})^{d}\to \CC$ given by
\[\zeta\mapsto \sum_{n\in \ZZ^{d}}f(n)\prod_{j=1}^{d}\zeta_{j}^{n_{j}}.\]
The fact that $\ZZ^d$ satisfies the rapid decay property is an exercise in Fourier analysis and the Cauchy--Schwarz inequality. 

For discrete nonabelian groups, the situation is more difficult and the first nontrivial example was provided by Haagerup in \cite{HaagRDP} (for topological groups, a example was previously exhibited in \cite{HerzRDP}). He was interested in this property in the context of approximation proprieties of the reduced $C^{*}$-algebra of free groups and proved that free groups satisfy the rapid decay property. Since then, the rapid decay properties has been established for virtually nilpotent groups, hyperbolic groups, and many others. See \cite{ChatterjiRDPSurvey} for a good introduction to this topic.
The name ``rapid decay property'' comes from the fact that the estimate in the above definition allows one to define a algebra of bounded convolution operators from ``Schwartz functions'' on $\Gamma$, and this algebra  serves as the analogue of Fourier transforms of $C^{\infty}$-functions on $(S^{1})^{d}$ for the case of $\ZZ^{d}$.

The rapid decay property is relevant to use because of the following result of Chatterji, Pittet, Saloff-Coste (we remark that other applications of the rapid decay property to random walks include \cite[Example 2]{PNnonunique}).
\begin{theorem}[{\cite[Theorem 1.3]{ChatterjiRDPSurvey} and \cite[Section 7]{CPSPRDLie}}]  \label{thm:rd-return}
Let $\Gamma$ be a finitely generated group with the rapid decay property and let $\alpha$ be as in Definition \ref{defn: RDP}. Let $\nu$ be a finitely supported and symmetric probability measure on $G$.
Consider the random walk taking independent steps from $\nu$.
Let $p_n$ be the probability that it returns to the starting vertex after $n$ steps. Then
\[p_{2n}\gtrsim_{\nu}\rho^{2n}n^{-2\alpha}.\]
\end{theorem}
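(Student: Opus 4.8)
The plan is to deduce the lower bound from a single application of the rapid decay inequality to the iterated step distribution, exploiting the elementary operator identity $\snorm{\lambda(\nu)^{n}}=\rho^{n}$. Write $\lambda(\nu)=\sum_{g\in\Gamma}\nu(g)\lambda(g)\in B(\ell^{2}(\Gamma))$ for the convolution operator, $\nu^{*n}$ for the $n$-fold convolution (so that $p_{k}=\nu^{*k}(e)$ and $\lambda(\nu)^{n}=\lambda(\nu^{*n})$), and let $\rho=\snorm{\lambda(\nu)}$ be the spectral radius, equivalently $\lim_{n}p_{2n}^{1/2n}$. Before anything else I would record three elementary consequences of the hypotheses. (i)~Since $\nu$ is symmetric, $\lambda(\nu)$ is self-adjoint, so its operator norm is attained on its spectrum and $\snorm{\lambda(\nu)^{n}}=\snorm{\lambda(\nu)}^{n}=\rho^{n}$. (ii)~Regarding $\nu^{*n}$ as an element of $\ell^{2}(\Gamma)$ we have $\lambda(\nu)^{n}\delta_{e}=\nu^{*n}$, so, using self-adjointness,
\[
\snorm{\nu^{*n}}_{2}^{2}=\snorm{\lambda(\nu)^{n}\delta_{e}}^{2}=\ang{\delta_{e},\lambda(\nu)^{2n}\delta_{e}}=\nu^{*2n}(e)=p_{2n}.
\]
(iii)~Since $\nu$ is finitely supported, fix $\ell$ with $|g|_{S}\le\ell$ for all $g$ in the support of $\nu$; then $\nu^{*n}$ is supported on $\{g:|g|_{S}\le\ell n\}$.

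With these in hand the argument is immediate. Applying the rapid decay inequality of Definition~\ref{defn: RDP} --- in the ball-supported form $\snorm{\lambda(f)}\le C(1+R)^{\alpha}\snorm{f}_{2}$, valid whenever $f$ is supported on $\{g:|g|_{S}\le R\}$, which is the standard reformulation of the Sobolev estimate given there --- to $f=\nu^{*n}$ gives
\[
\rho^{n}=\snorm{\lambda(\nu)^{n}}=\snorm{\lambda(\nu^{*n})}\le C(1+\ell n)^{\alpha}\snorm{\nu^{*n}}_{2}=C(1+\ell n)^{\alpha}\sqrt{p_{2n}}.
\]
Squaring and rearranging,
\[
p_{2n}\ge\frac{\rho^{2n}}{C^{2}(1+\ell n)^{2\alpha}}\gtrsim_{\nu}\rho^{2n}\,n^{-2\alpha},
\]
which is the assertion.

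Since the computation is this short, there is no combinatorial obstacle; the care lies entirely in the identifications of the first paragraph. The essential one is (i): self-adjointness of $\lambda(\nu)$, a consequence of the symmetry of $\nu$, is what upgrades the automatic inequality $\snorm{\lambda(\nu)^{n}}\le\rho^{n}$ to an equality, and it is also what pins the constant $\rho$ on the right-hand side of the conclusion to the operator norm appearing on the left. The remaining point is normalization: the exponent in the conclusion is inherited from the exponent in Definition~\ref{defn: RDP} through the standard passage between the Sobolev weight and the ball estimate (on a ball of radius $R$ the weight is a polynomial in $R$ of the corresponding degree), so that with a different indexing of $\alpha$ the exponent must be adjusted to match. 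I would also note the same estimate admits a spectral phrasing: rapid decay applied to $\nu^{*n}$ forces the spectral measure $\mu_{G}$ of $\lambda(\nu)$ at $\delta_{e}$ to satisfy $\mu_{G}[(1-\theta)\rho,\rho]\gtrsim_{\nu}\theta^{2\alpha}$ up to a logarithmic factor in $1/\theta$, after which $p_{2n}=\int t^{2n}\,d\mu_{G}(t)\ge(1-\tfrac{1}{n})^{2n}\rho^{2n}\mu_{G}[(1-\tfrac{1}{n})\rho,\rho]$ recovers the bound; this is the sense in which the upper estimates of Theorem~\ref{thm:RegularExp} and Proposition~\ref{prop:return-decay} are sharp for Cayley graphs. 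The genuinely hard input --- verifying rapid decay for concrete groups such as free and hyperbolic groups --- is classical and is not reproved here.
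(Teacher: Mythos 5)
The paper offers no proof of this statement—it is quoted from \cite{ChatterjiRDPSurvey} and \cite{CPSPRDLie}—so there is nothing internal to compare against. Your derivation is the standard one, and the identifications you set up are all correct: symmetry of $\nu$ makes $\lambda(\nu)$ self-adjoint, whence $\snorm{\lambda(\nu)^{n}}=\rho^{n}$; $\snorm{\nu^{*n}}_{2}^{2}=\angs{\delta_{e},\lambda(\nu)^{2n}\delta_{e}}=p_{2n}$; and $\nu^{*n}$ is supported in the ball of radius $\ell n$. Applying the rapid decay inequality to $f=\nu^{*n}$ and squaring is exactly the right move.

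The one step that does not go through as written is the normalization you wave at in your closing paragraph. Definition~\ref{defn: RDP} as printed carries the weight $(|g|_{S}^{2}+1)^{2\alpha}$, so for $f$ supported on $\set{g:|g|_{S}\le R}$ it yields $\snorm{\lambda(f)}\le C(R^{2}+1)^{\alpha}\snorm{f}_{2}$, which is essentially $C(1+R)^{2\alpha}\snorm{f}_{2}$ --- not the $C(1+R)^{\alpha}\snorm{f}_{2}$ you assert as ``the standard reformulation.'' Feeding the correct ball estimate into your computation gives $p_{2n}\gtrsim_{\nu}\rho^{2n}n^{-4\alpha}$, not $n^{-2\alpha}$. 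So either the weight in Definition~\ref{defn: RDP} is to be read as $(|g|_{S}^{2}+1)^{\alpha}$ (equivalently $(1+|g|_{S})^{2\alpha}$, the usual Sobolev normalization, under which your argument produces exactly the stated bound), or the exponent in the conclusion must be doubled. Since the exponent $2\alpha$ is the entire quantitative content of the theorem and is reused in the corollary that follows it, this needs to be pinned down explicitly rather than deferred to ``a different indexing of $\alpha$.'' With that normalization fixed, your proof is complete.
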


Then, combined with \cref{prop:return-decay}, we deduce the following. (Technically, \cref{prop:return-decay} is stated for unweighted graphs, but the same proof applies with $\nu$ weights.)

\begin{corollary}
Assume the hypotheses of \cref{thm:rd-return}.
Let $\mu$ be the unique measure on $\RR$ so that 
\[p_{k}=\int t^{k}\,d\mu(t)\]
for all nonnegative integers $k$. Then for every $\epsilon > 0$, 
\[\mu([(1-\theta)\rho,\rho])\gtrsim_{\nu, \alpha, \epsilon}\theta^{2\alpha + \epsilon} \quad \text{for all } 0 < \theta < 1/2.\]
\end{corollary}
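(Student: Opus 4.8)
The plan is to run the computation in the proof of \cref{prop:return-decay}(b) in reverse. Recall that $\mu$ is a probability measure supported on $[-\rho,\rho]$, where $\rho=\lim_{n}p_{2n}^{1/(2n)}$, with moments $\int t^{k}\,d\mu(t)=p_{k}$; in particular $p_{2n}=\int_{-\rho}^{\rho}t^{2n}\,d\mu(t)$. On the other hand, since $\Gamma$ has the rapid decay property, \cref{thm:rd-return} supplies the lower bound $p_{2n}\gtrsim_{\nu}\rho^{2n}n^{-2\alpha}$ valid for all $n$. The corollary will follow once we establish, for every positive integer $n$ and every $\theta\in(0,1/2)$,
\[
\mu([(1-\theta)\rho,\rho])\;\gtrsim_{\nu,\alpha}\;n^{-2\alpha}-(1-\theta)^{2n},
\]
after which we optimize the choice of $n$ in terms of $\theta$.

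To prove this inequality I would split into two cases, exactly as in \cref{prop:return-decay}(b). If $p_{m}=0$ for every odd $m$, then by uniqueness in the moment problem for compactly supported measures $\mu$ is symmetric about $0$, so
\[
p_{2n}=\int_{-\rho}^{\rho}t^{2n}\,d\mu(t)\le(1-\theta)^{2n}\rho^{2n}+2\rho^{2n}\,\mu([(1-\theta)\rho,\rho]),
\]
and dividing by $\rho^{2n}$ and inserting the rapid-decay lower bound gives the claim. Otherwise, fix an odd $k$ with $p_{k}>0$. Using the submultiplicativity $p_{a+b}\ge p_{a}p_{b}$ we have $p_{2n}\le p_{k}^{-1}p_{2n+k}$, and since $2n+k$ is odd the contribution of $[-\rho,0]$ to $\int t^{2n+k}\,d\mu$ is nonpositive, so
\[
p_{2n}\le p_{k}^{-1}\int_{0}^{\rho}t^{2n+k}\,d\mu(t)\le p_{k}^{-1}\rho^{2n+k}\paren{(1-\theta)^{2n}+\mu([(1-\theta)\rho,\rho])};
\]
comparing again with $p_{2n}\gtrsim_{\nu}\rho^{2n}n^{-2\alpha}$ yields the claim, now with an implied constant depending additionally on $k$ and $\rho$ (hence ultimately only on $\nu$ and the fixed data $\Gamma,S$).

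Finally, given $\epsilon>0$, I would apply the displayed inequality with $n=\ceil{\theta^{-(1+\epsilon/(2\alpha))}}$. Then $(1-\theta)^{2n}\le e^{-2\theta n}\le e^{-2\theta^{-\epsilon/(2\alpha)}}$ tends to $0$ faster than any power of $\theta$, while $n^{-2\alpha}\gtrsim_{\alpha,\epsilon}\theta^{2\alpha+\epsilon}$, so for all $\theta$ below a threshold $\theta_{0}=\theta_{0}(\nu,\alpha,\epsilon)$ we obtain $\mu([(1-\theta)\rho,\rho])\gtrsim_{\nu,\alpha,\epsilon}\theta^{2\alpha+\epsilon}$. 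For $\theta\in[\theta_{0},1/2)$ one uses that $\theta\mapsto\mu([(1-\theta)\rho,\rho])$ is nondecreasing and strictly positive at $\theta_{0}$ (apply the previous step at $\theta=\theta_{0}/2$), so on this range it is bounded below by a constant, which dominates $\theta^{2\alpha+\epsilon}$ up to a factor depending only on $\theta_{0}$.

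The point that genuinely needs care is the dichotomy in the second paragraph: the rapid-decay lower bound by itself only forces $\mu$ to have mass near $\{\pm\rho\}$, and it is either the symmetry of $\mu$ (when the Cayley graph has no odd closed walks) or the odd-step comparison $p_{2n}\le p_{k}^{-1}p_{2n+k}$ (when it does) that pins this mass to the positive interval $[(1-\theta)\rho,\rho]$. The remaining ingredients --- splitting the integral at $(1-\theta)\rho$, bookkeeping the constants, and the choice of $n$ --- are routine.
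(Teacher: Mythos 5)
Your proof is correct and follows essentially the paper's intended route: the paper merely cites \cref{prop:return-decay}, and what you do is rerun the computation from the proof of part (b) --- splitting the moment integral at $(1-\theta)\rho$ and handling the negative spectrum via the symmetry/odd-$k$ dichotomy --- in reverse, feeding in the rapid-decay lower bound $p_{2n}\gtrsim_\nu \rho^{2n}n^{-2\alpha}$ and optimizing $n\approx\theta^{-1-\epsilon/(2\alpha)}$. You are also right that this rerunning is genuinely needed (a black-box contrapositive of part (b) would not yield the bound for every $\theta$, since its hypothesis quantifies over all $\theta$), so your write-up is if anything more careful than the paper's one-line deduction.
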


Note that this says the polynomial upper bound we exhibit in \cref{thm:unimod-main,thm:RegularExp} cannot be improved in general, and thus our results are optimal in the general case. It is an unpublished result of  Chatterji, Pittet, Saloff-Coste (see the discussion following Theorem 1.3 in \cite{ChatterjiRDPSurvey}) that for a general discrete group, 
\[p_{2n}\lesssim_{\nu} \rho^{2n}n^{-1}\]
and thus, by Proposition \ref{prop:return-decay}
\[\mu([(1-\theta)\rho,\rho])\lesssim_\nu \theta.\]
This is better than what we can obtain from \cref{thm:unimod-main,thm:RegularExp}.
However it is unclear to us if their arguments can be adapted to the more general unimodular random graph case. Additionally, the argument they use is a pure existence argument which produces an ineffective constant. With some bookkeeping, one can produce explicit constants in our results. 
For a finitely generated amenable group, the rapid decay property is equivalent to polynomial growth \cite[Proposition B]{JoliRDP}. One should compare the results of Chatterji, Pittet, Saloff-Coste and the discussion following Theorem \ref{thm:decay on return probs intro}.


\end{document}